\definecolor{DarkOliveGreen}{rgb}{0.08,0.42,0.08}
\numberwithin{equation}{section}
\theoremstyle{plain}
\newtheorem{theorem}{Theorem}[section]
\newtheorem{lemma}[theorem]{Lemma}
\newtheorem{conj}[theorem]{Conjecture}
\newtheorem{proposition}[theorem]{Proposition}
\newtheorem{cor}[theorem]{Corollary}
\newtheorem{rem}[theorem]{Remark}
\theoremstyle{definition}
\newtheorem{definition}[theorem]{Definition}
\title{The Brownian marble}
\author{
Samuel G. G. Johnston\thanks{Department of Mathematics, King's College London. \texttt{samuel.g.johnston@kcl.ac.uk}}
\and Andreas Kyprianou\thanks{Mathematics Institute, University of Warwick. \texttt{andreas.kyprianou@warwick.ac.uk}}
\and Tim Rogers\thanks{Department of Mathematical Sciences, University of Bath. \texttt{ma3tcr@bath.ac.uk}}
\and Emmanuel Schertzer\thanks{Department of Mathematics, University of Vienna. \texttt{emmanuel.schertzer@univie.ac.at}}
}
\date{}
\begin{document}
\maketitle

\begin{abstract}
Let $R:(0,\infty) \to [0,\infty)$ be a measurable function. 
Consider coalescing Brownian motions started from every point in the subset $\{ (0,x) : x \in \mathbb{R} \}$ of $[0,\infty) \times \mathbb{R}$ (with $[0,\infty)$ denoting time and $\mathbb{R}$ denoting space) and proceeding according to the following rule: the interval $\{t\} \times [L_t,U_t]$ between two consecutive Brownian motions instantaneously fragments' at rate $R(U_t - L_t)$. At a fragmentation event at a time $t$, we initiate new coalescing Brownian motions from each of the points $\{ (t,x) : x \in [L_t,U_t]\}$. The resulting process, which we call the $R$-marble, is easily constructed when $R$ is bounded, and may be considered a random subset of the Brownian web. 

Under mild conditions, we show that it is possible to construct the $R$-marble when $R$ is unbounded as a limit as $n \to \infty$ of $R_n$-marbles where $R_n(g) = R(g) \wedge n$. The behaviour of this limiting process is mainly determined by the shape of $R$ near zero. The most interesting case occurs when the limit $\lim_{g \downarrow 0} g^2 R(g) = \lambda$ exists in $(0,\infty)$, in which case we find a phase transition. For $\lambda \geq 6$, the limiting object is indistinguishable from the Brownian web, whereas if $\lambda < 6$, then the limiting object is a nontrivial stochastic process with large gaps. 

When $R(g) = \lambda/g^2$, the $R$-marble is a self-similar stochastic process which we refer to as the \emph{Brownian marble with parameter $\lambda > 0$}. We give an explicit description of the spacetime correlations of the Brownian marble, which can be described in terms of an object we call the Brownian vein; a spatial version of a recurrent extension of a killed Bessel-$3$ process.
\end{abstract}

\vspace{1em}
\noindent\textbf{MSC2020:}
Primary: 60K35; 82C21; 60G44; 60J65; 60J90.

\vspace{0.5em}
\noindent\textbf{Keywords:}
Brownian web, coalescence, fragmentation, dual web, Arratia flow, coalescing Brownian motion, Bessel process, spines, excursions, self-similar Markov processes, recurrent extensions.

\begin{figure}[h!]
\centering
\includegraphics[width=0.55\textwidth]{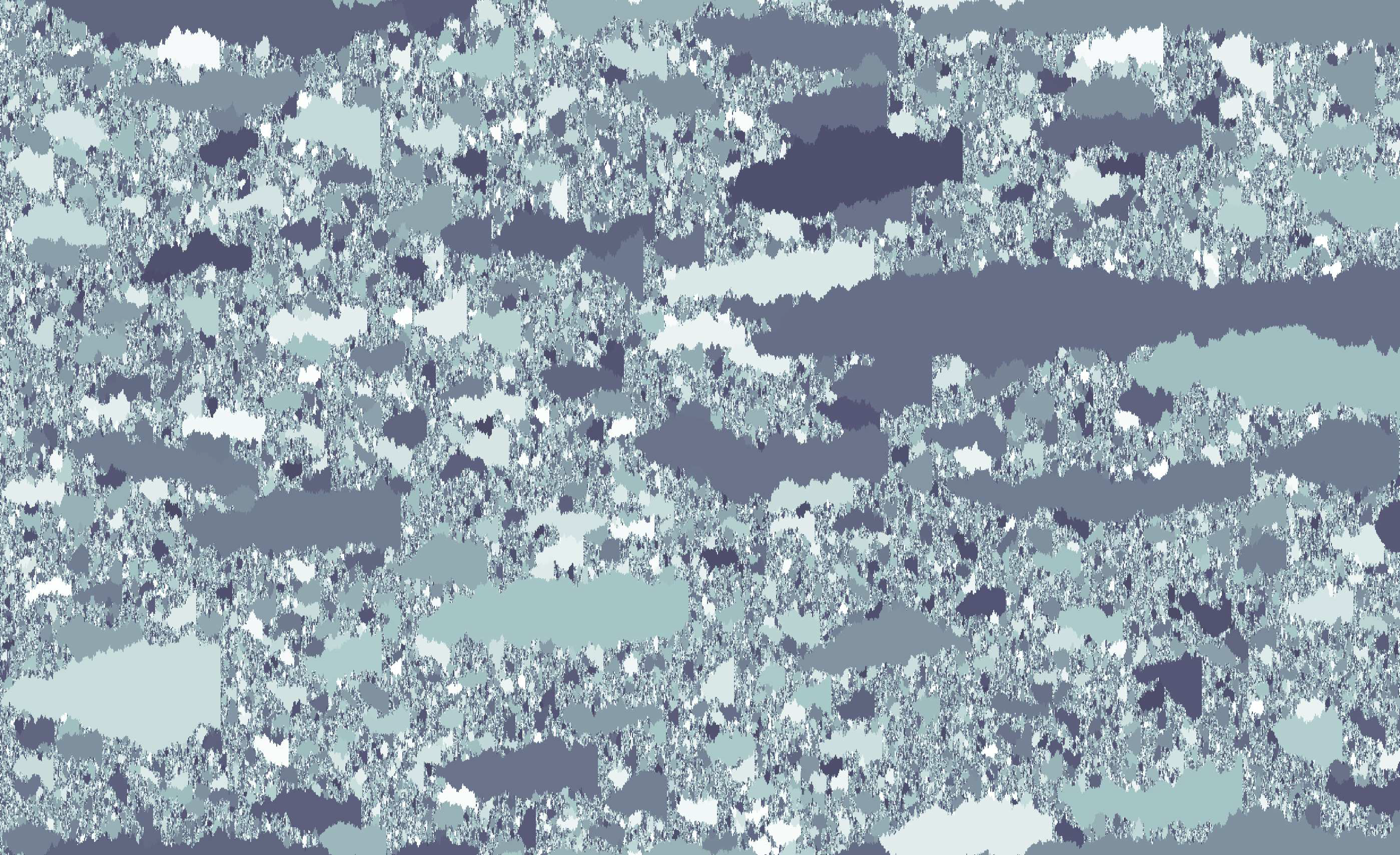}
\caption{The Brownian marble with parameter $\lambda = 3$.}
\label{fig:abstract}
\end{figure}


\section{Introduction and overview}

Consider a collection of Brownian motions in $\mathbb{R}$ that coalesce when they meet, meaning that any pair are independent up to the time that they first touch and are equal thereafter. At any particular moment $t > 0$, this collection defines a partition of $\mathbb{R}$ into countably many non-empty intervals of the form $[L_t,U_t)$, where $L_t$ and $U_t$ are consecutive distinct Brownian motions. If we were to track the evolution of the intervals overlapping some subset $[a,b]\subset\mathbb{R}$, we would observe many small intervals being progressively replaced by fewer, larger ones. This statement holds true even if we start at time zero with infinitely many Brownian motions, one issued from every point in $\mathbb{R}$. In fact, the number of intervals in our partition of $[a,b]$, while being infinite at time $t=0$, would instantaneously become finite at any $t>0$ (it can be shown that {the expected number of particles in $[a,b]$ at time $t$ is} $(b-a)/\sqrt{2\pi t}$ \cite[Section 6.2.4]{SSS}).

This instantaneous transition from a partition with infinitely many elements to a finite number is analogous to the `coming down from infinity' property of certain non-spatial processes such as the Kingman coalescent (there the number of elements of the partition shrinks like $2/t$ \cite{kingman}). In the coalescent theory literature there has been recent interest in processes that incorporate `fragmentation' dynamics, in which large partition elements are stochastically replaced with many smaller ones. These dynamics act in tension with coalescence and may cause the process to remain infinite for all time. Berestycki has characterised a general class of exchangeable fragmentation-coalescence processes, providing conditions under which they do or do not come down from infinity \cite{berestycki}. In \cite{KPRS} the authors examined the behaviour of the so-called `fast fragmentation-coalescence' process, in which each fragmentation event adds an infinite number of new elements to the partition. This process exhibits a phase transition in excursions from infinity, controlled by the rate of fragmentation. 

In the present article, we ask if an analogous phenomenon is possible in the spatial setting described by coalescing Brownian motions. It turns out that it is, and that the resulting object we obtain is a particular self-similar subset of the Brownian web, which we name the \textbf{Brownian marble}.

\subsection{Preliminaries}
Before defining the Brownian marble, let us walk through the definitions of \textbf{coalescing Brownian motion} and the \textbf{Brownian web}. Let $z_1 = (t_1,x_1),\ldots,z_k = (t_k,x_k)$ be distinct elements of the spacetime domain $[0,\infty) \times \mathbb{R}$ (with $[0,\infty)$ corresponding to time and $\mathbb{R}$ to space), and consider coalescing Brownian motions $B^1,\ldots,B^k$ initiated from these $k$ points so that $B^i_{t_i} = x_i$. These Brownian motions are initially independent but coalesce when they meet, and are coupled for the remainder of time, so that
\begin{align*}
B^i_s = B^j_s \text{ for some $s \geq 0$ } \implies B^i_u = B^j_u \text{ for all $u \geq s$}.
\end{align*}

It is possible to make sense of coalescing Brownian motions initiated from an infinite countable set. Loosely speaking, if the infinite countable set is a dense subset $\{z_1,z_2,\ldots \}$ of the $y$-axis $\{0\} \times \mathbb{R}$, we simply refer to the process as \textbf{coalescing Brownian motion}. If this infinite countable set is a dense subset of $[0,\infty) \times \mathbb{R}$ itself, we refer to the process as the \textbf{Brownian web}. (The Brownian web is usually defined on all of $\mathbb{R}^2$, but for our purposes it will be more suitable to consider it on the spacetime domain $[0,\infty) \times \mathbb{R}$.)

In the present article we will be interested in an intermediate class of processes in which coalescing Brownian motions are issued from a certain random subset of $[0,\infty) \times \mathbb{R}$. We give an informal definition now.

\begin{definition}[Informal definition of the $R$-marble]
Let $R:(0,\infty) \to [0,\infty)$ be a suitable function.
The \textbf{R-marble} is a random collection of paths associated with coalescing Brownian motion but with the additional dynamic that at instantaneous rate $R(U_t-L_t)$ an interval $[L_t,U_t]$ between two consecutive Brownian motions fragments into dust. At this fragmentation event, we initiate new coalescing Brownian motions (or paths) from every point of the spacetime interval $\{t\} \times [L_t,U_t]$.
\end{definition}

In the definition of the $R$-marble, all intervals experience fragmentation events independently of other intervals in the process. 

The construction of the $R$-marble with a bounded rate function is fairly straightforward (we outline the details in a moment in Section \ref{sec:marbleconstruction}). In this article however, we will be most interested in making sense of the $R$-marble with the unbounded rate function $R(g) = \lambda/g^2$, which grants the process self-similarity. Notwithstanding the technical challenges associated with making sense of such a process, we give an initial definition of the main object we study:

\begin{definition} \label{df:marble}
The \textbf{Brownian marble} with fragmentation parameter $\lambda \geq 0$ is the $R$-marble with fragmentation rate
\begin{align*}
R_\lambda(g) = \lambda/g^2.
\end{align*}
\end{definition}
Definition \ref{df:marble} is somewhat premature, in that it immediately raises several technical issues. First of all, the fragmentation rate $g \mapsto \lambda/g^2$ is unbounded as $g \downarrow 0$, so that any fragmentation event is immediately followed by a cascade of further fragmentation events as the small subintervals created by a fragmentation event themselves undergo further fragmentation. On the other hand, as fragmentations occur, countless small gaps begin to emerge, and, loosely speaking, there are infinitely many opportunities for a gap between two consecutive Brownian motions to avoid fragmentation and take on significant size. In this sense there is an intricate interplay between the infinite rate of fragmentation near zero, and the infinite opportunities for gaps between the Brownian motions to emerge. We will see in particular that a phase transition occurs at $\lambda = 6$: gaps emerge if and only if $\lambda < 6$.

\subsection{The Brownian marble: construction and existence} \label{sec:technical}

In this section we outline the definitions necessary to make more precise sense of the $R$-marble. 
Coalescing Brownian motion, the Brownian web, and the $R$-marble may each be constructed as an $\mathcal{H}$-valued random variable, where $\mathcal{H}$ is a collection of paths in spacetime. We will provide the full details in Section \ref{sec:spaces}, but will outline the key details here. 
Let 
\begin{align*}
\Pi := \{ \pi : [\sigma_\pi, \infty) \to \mathbb{R} \text{ continuous}, \sigma_\pi \geq 0 \}
\end{align*}
be the space of paths $\pi = (\pi_t)_{t \geq \sigma_\pi}$ in $[0,\infty) \times \mathbb{R}$. These are continuous real-valued functions defined on some subinterval $[\sigma_\pi,\infty)$ of $[0,\infty)$. It is possible to introduce a metric $\mathrm{d}_{\mathrm{path}}$ on paths: paths $\pi_1$ and $\pi_2$ are close if their starting times are close and they are uniformly close on compact intervals. One can consider the metric space 
\begin{align*}
\mathcal{H} := \{ \mathcal{M} : \mathcal{M} \subseteq \Pi \}
\end{align*}
of collections of paths endowed with the Hausdorff metric $\mathrm{d}_{\mathcal{H}}$ (i.e.\ two subsets $K_1,K_2$ of $\mathcal{H}$ are close to one another if each path of $K_1$ is close to some path in $K_2$, and vice versa). 

\subsubsection{The Brownian web}
The Brownian web may be cast as an $\mathcal{H}$-valued random variable $\mathcal{W}$ as follows. Let $\mathcal{D} = \{z_1,z_2,\ldots\}$ be a dense subset of $[0,\infty) \times \mathbb{R}$, and let $\mathcal{W}_n$ be the collection of paths associated with coalescing Brownian motions started from $\{z_1,\ldots,z_n\}$. We define the Brownian web $\mathcal{W}$ as the closure of the union $\bigcup_{n \geq 1} \mathcal{W}_n$ in the topology induced by the Hausdorff metric. 

\subsubsection{Coalescing Brownian motion}
To construct coalescing Brownian motion in place of the Brownian web, simply replace the dense subset of $[0,\infty) \times \mathbb{R}$ with a dense subset of $\{0\} \times \mathbb{R}$ in the previous construction.

\subsubsection{Bubbles} 
Before discussing the construction of the $R$-marble, we begin by introducing the notion of a \textbf{bubble} for a $\mathcal{H}$-valued random variable. For a collection of paths $\mathcal{M}$ in $\mathcal{H}$, we define the \textbf{trace} of $\mathcal{M}$ to be the set 
\begin{align} \label{eq:image}
    \mathrm{Tr}(\mathcal{M}) := \overline{ \{ (t,x) \in [0,\infty) \times \mathbb{R} : \exists \pi \in \mathcal{M} : t \geq \sigma_\pi \text{ and } \pi_t = x \} } ,
\end{align}
where the overline denotes closure in $[0,\infty) \times \mathbb{R}$. 
That is, $\mathrm{Tr}(\mathcal{M})$ is the closure of the set of all points in $[0,\infty) \times \mathbb{R}$ that are crossed by some path in $\mathcal{M}$. 

\begin{definition}
 A \textbf{bubble} of $\mathcal{M}$ is a maximal connected open subset in the complement $[0,\infty) \times \mathbb{R} - \mathrm{Tr}(\mathcal{M})$.
\end{definition}

If $\mathcal{W}$ is a copy of the Brownian web, then $\mathrm{Tr}(\mathcal{W}) = [0,\infty) \times \mathbb{R}$ almost surely. Accordingly, the Brownian web almost surely has no non-empty bubbles. Conversely, if $\mathcal{M}_0$ is a copy of coalescing Brownian motion, the Lebesgue measure of $\mathrm{Tr}(\mathcal{M}_0)$ is zero almost surely, and its complement in $[0,\infty) \times \mathbb{R}$ is equal to a countable union of open subsets of $[0,\infty) \times \mathbb{R}$. Each point $z = (t,x)$ of $(0,\infty) \times \mathbb{R}$ is almost surely contained in a bubble of $\mathcal{M}_0$. In fact, if we write $\mathcal{B}^z$ for the bubble of $\mathcal{M}_0$ containing $z$, then there exist start and end times $\sigma_z$ and $\tau_z$, and upper and lower boundaries $L^z$ and $U^z$ such that
\begin{align} \label{eq:bubbly}
\mathcal{B}^z = \{ (t,x) : \sigma_z < t < \tau_z, L^z_t < x < U^z_t \}.
\end{align}
We define the \textbf{height process} of a bubble to be the stochastic process $(g_t)_{\sigma_z < t < \tau_z}$ given by $g_t = U^z_t - L^z_t$. 

\subsubsection{The $R$-marble with bounded $R$} \label{sec:marbleconstruction}
With the notion of a bubble at hand, we are now ready to outline the construction of the $R$-marble with bounded and measurable rate function $R:(0,\infty) \to [0,\infty)$. 

Let $\mathcal{M}_0$ be a copy of coalescing Brownian motion. We will construct a sequence $\mathcal{M}_0 \subseteq \mathcal{M}_1 \subseteq \mathcal{M}_2 \subseteq \ldots $ of $\mathcal{H}$-valued random variables by using a dense sequence $\mathcal{D} = (z_1,z_2,\ldots)$ of points of $(0,\infty) \times \mathbb{R}$ to explore bubbles and potentially fragment them into smaller bubbles. The $R$-marble $\mathcal{M}_\infty$ will be constructed as the closure of the union $\bigcup_{n \geq 1} \mathcal{M}_n$ of this sequence.

We will refer to a bubble of $\mathcal{M}_k$ as \emph{permanent} if it is also a bubble in $\mathcal{M}_\infty$, and as \emph{temporary} otherwise. For coalescing Brownian motion, each point $z = (t,x)$ of $(0,\infty) \times \mathbb{R}$ is almost surely contained in a bubble (i.e., is not part of a path). Starting at step zero, take $\mathcal{M}_0$ and label all of its bubbles as `temporary'. 

Let $\mathcal{B}_1$ be the (temporary) bubble of $\mathcal{M}_0$ containing $z_1$. (It is almost surely the case that $z_1$ lies in a bubble $\mathcal{B}_1$.) Then we can write $\mathcal{B}_1 = \{ (t,x) : \sigma_1 < t < \tau_1, L^1_t < x < U^1_t \}$. Run a stochastic clock that rings at rate $R(U^1_t-L^1_t)_{t \in [\sigma_1,\tau_1]}$. This rate is well-defined as $R$ is bounded. If this clock does not ring during $[\sigma_1,\tau_1]$, set $\mathcal{M}_1 = \mathcal{M}_0$, and demarcate the bubble $\mathcal{B}_1$ as a permanent bubble. Otherwise, if the clock rings at some time $W_1 \in (\sigma_1,\tau_1)$, we declare $\mathcal{B}_1' := \mathcal{B}_1 \cap \{ (t,x) : t < W_1\}$ to be a permanent bubble, but initiate new coalescing Brownian motions from each point of $\{W_1\} \times [L^1_{W_1},U^1_{W_1}]$. These new paths coalesce with one another and with the paths of $\mathcal{M}_0$. Let $\mathcal{M}_1$ denote the union of $\mathcal{M}_0$ together with these new paths initiated from $\{W_1\} \times [L^1_{W_1},U^1_{W_1}]$. We note that, up to a set of Lebesgue measure zero, the temporary bubble $\mathcal{B}_1$ is now a union of the permanent bubble $\mathcal{B}_1'$ together with some new (in fact a countably infinite number of) temporary bubbles of $\mathcal{M}_1$. 

Generally, after creating $\mathcal{M}_k$, let $z_{k+1}$ be the $(k+1)^{\text{th}}$ point in $\mathcal{D}$. If $z_{k+1}$ lies in a permanent bubble, set $\mathcal{M}_{k+1} = \mathcal{M}_k$ and proceed to $z_{k+2}$. If not, let $\mathcal{B}_{k+1}$ be the temporary bubble of $\mathcal{M}_k$ containing $z_{k+1}$ and proceed according to the above rule by running a stochastic clock inside $\mathcal{B}_{k+1}$, that will either establish $\mathcal{B}_{k+1}$ as a permanent bubble or break into a permanent bubble and some new temporary bubbles. 

While, for every $k \geq 1$, there will exist almost surely some $k' > k$ such that $\mathcal{M}_{k'} \neq \mathcal{M}_k$, it is possible to show that the sequence $(\mathcal{M}_k)_{k \geq 1}$ converges locally in the following sense. Namely, we now argue that for each fixed $z  = (t,x)$ in spacetime, there exists almost surely some $k \geq 1$ such that the bubble of $\mathcal{M}_{k'}$ containing $z$ is the same for all $k' \geq k$. To see this, consider the temporary bubble of $\mathcal{M}_0$ containing $z$, and suppose that it has duration $\ell_z = \tau_z - \sigma_z$ (see \eqref{eq:bubbly}). 
Then the probability that when this bubble is first explored it is made permanent is at least $q := e^{ - R_{\mathrm{max}}\ell_z }$, where $R_{\mathrm{max}} := \sup_{g > 0 } R(g)$. 
If this bubble is not made permanent, a new temporary bubble containing $z$ of length $< \ell_z$ is created in some $\mathcal{M}_k$. The probability that this bubble is made permanent when it is next explored is also at least $q$. 
Proceeding like this, we see that the number of distinct temporary bubbles containing $z$ occurring in the sequence $(\mathcal{M}_k)_{k \geq 0}$ is bounded above by a geometric random variable with parameter $q > 0$. In other words, the bubble containing $z$ in the sequence $(\mathcal{M}_k)_{k\geq 0}$ is eventually constant.

It is also easy to see that the law of the limiting random variable $\mathcal{M}_\infty$ does not depend on the dense set $\mathcal{D}$ --- the dense set only changes the order in which the bubbles are explored, and not the law of the underlying limit random variable.

\subsection{The $R$-marble with unbounded $R$ and the phase transition}

Our next result states that under mild conditions, it is also possible to construct the $R$-marble with an unbounded rate function $R$. We say that a rate function $R$ is \textbf{quadratic-regular} if it is nonincreasing and it satisfies one of the three following mutually exclusive conditions: either
\begin{align} \label{eq:squareregular}
\liminf_{ g \downarrow 0} g^2 R(g) > 6 \quad \text{or} \quad \limsup_{ g \downarrow 0} g^2 R(g) < 6  \quad \text{or} \quad R(g) = 6/g^2 \text{ for all $g  > 0$}.
\end{align}
We emphasise that quadratic-regular functions are nonincreasing. 
We refer to quadratic-regular functions satisfying $\liminf_{g \downarrow 0} g^2 R(g) \geq 6$ (i.e., the first and last case of \eqref{eq:squareregular}) as \textbf{upper quadratic-regular}, and those satisfying the second condition $\limsup_{g \downarrow 0} g^2 R(g) < 6$ as
\textbf{lower quadratic-regular}. 

Let $a \wedge b$ denote the minimum of real numbers $a$ and $b$. Our main result states that if $R$ is quadratic-regular then we can construct the $R$-marble as an almost-sure limit of $R_n$-marbles with truncated version $R_n(g) = R(g) \wedge n$ of this original rate function, and that moreover, a phase transition occurs according to the limiting behaviour of $g^2R(g)$ as $g \downarrow 0$.

\begin{theorem} \label{thm:main}
Let $R$ be quadratic-regular. Then we may construct a probability space carrying a sequence of $\mathcal{H}$-valued random variables $(\mathcal{M}(R_n))_{n \geq 1}$ such that $\mathcal{M}(R_n)$ has the law of the $R_n$-marble with the truncated rate function $R_n(g) = R(g) \wedge n$, and such that we have the almost-sure convergence
\begin{align*}
\mathcal{M}(R_n) \to \mathcal{M}(R)
\end{align*}
to a limit random variable $\mathcal{M}(R)$ in the Hausdorff metric $\mathrm{d}_{\mathcal{H}}$. We call the limit random variable $\mathcal{M}(R)$ the $R$-marble.

Moreover, we have the following phase transition:
\begin{itemize}
\item If $\liminf_{g \downarrow 0} g^2R(g) \geq 6$, then the limit random variable $\mathcal{M}(R)$ has the law of the Brownian web, and in particular has no bubbles.
\item If $\limsup_{g \downarrow 0} g^2R(g)  < 6$, then the limit random variable $\mathcal{M}(R)$ has the property that every point $z = (t,x)$ of $(0,\infty) \times \mathbb{R}$ is almost surely contained in a bubble.
\end{itemize}
\end{theorem}

A few comments are in order. First of all, having only defined thus far the $R$-marble for bounded $R$ in Section \ref{sec:marbleconstruction}, Theorem \ref{thm:main} allows us to speak of the $R$-marble with any quadratic-regular rate function $R$, even if it is unbounded; this $R$-marble is constructed as a limit of $R_n$-marbles, where $R_n(g) = R(g) \wedge n$ is the truncated rate function.  
We emphasise that Theorem \ref{thm:main} is stronger than simply stating that the random variables $\mathcal{M}(R_n)$ converge in distribution; rather, it says that we can take a sequence of random variables $(\mathcal{M}(R_n))_{n \geq 1}$ in the same probability space (such that $\mathcal{M}(R_n)$ has the law of the $R_n$-marble) that converge in the Hausdorff metric almost surely.

The most interesting aspect of Theorem \ref{thm:main}, however, is the phase transition that occurs at $6$ for the limiting value of $g^2 R(g)$ as $g \downarrow 0$. On the one hand, Theorem \ref{thm:main} states that 
 when $\liminf_{g \downarrow 0} g^2 R(g) \geq 6$, the fragmentation mechanism is so strong as to prevent the formation of any gaps between the Brownian motions of any significant size. Since there are no gaps between Brownian motions, the process simply consists of coalescing Brownian motions initiated from every point of $[0,\infty) \times \mathbb{R}$; i.e.\ the process is the Brownian web. When $\limsup_{g \downarrow 0} g^2 R(g) < 6$, however, the coagulation is sufficiently strong compared to the fragmentation for bubbles to have a chance to emerge. This phenomenon may be regarded as a form of \emph{coming down from infinity} \cite{KPRS, BMS}.

The attentive reader will notice that we have omitted from our consideration the critical case where $R$ is a decreasing rate function satisfying $\limsup_{g \downarrow 0} g^2R(g) = 6$ but not taking the form $R(g) = 6/g^2$. We are not sure what happens in this case, but provisionally conjecture it is also the case here that $\mathcal{M}(R)$ is equal in law to the Brownian web.

\begin{figure}[h!]
\centering
\includegraphics[width=0.9\textwidth]{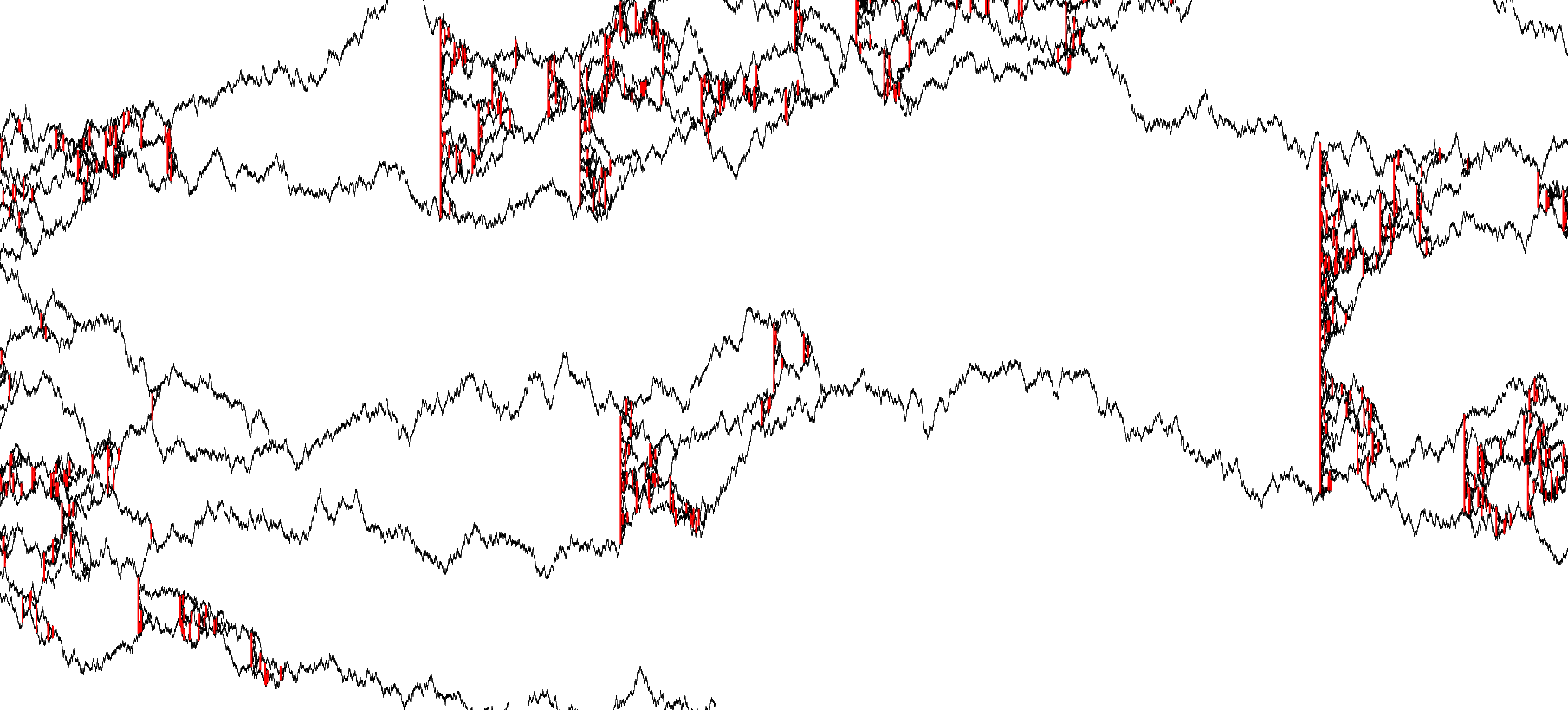}
\caption{A simulation of the Brownian marble with parameter $\lambda = 3$. We initiate coalescing Brownian motions at each point of $\{0\} \times \mathbb{R}$. Thereafter, at rate $\lambda/(U_t-L_t)^2$ an interval $[L_t,U_t]$ between two consecutive paths fragments. At such a fragmentation event we initiate new coalescing Brownian paths from each point $\{t\} \times [L_t,U_t]$. The fragmentation events are depicted in red. 
}
\label{fig:timmarble}
\end{figure}

\subsection{The Brownian marble and its distributional properties} \label{sec:distributional}

As highlighted above, Theorem \ref{thm:main} licences us to speak of the $R$-marble for any quadratic-regular rate function $R$. In particular, we may now speak of the \textbf{Brownian marble} with parameter $\lambda \geq 0$ as the $R_\lambda$-marble $\mathcal{M}(R_\lambda)$ with rate function
\begin{align*}
R_\lambda(g) = \lambda/g^2.
\end{align*}

Of course, the Brownian marble with parameter $\lambda = 0$ is simply coalescing Brownian motion. Conversely, by Theorem \ref{thm:main}, the Brownian marble with any parameter $\lambda \geq 6$ is identical in law to the Brownian web on $[0,\infty) \times \mathbb{R}$. However, for each intermediate value $\lambda$ in $(0,6)$, we have an $\mathcal{H}$-valued random variable $\mathcal{M}(R_\lambda)$ whose behaviour is somewhere between that of coalescing Brownian motion and the Brownian web. Every point $(t,x)$ of $(0,\infty) \times \mathbb{R}$ almost surely lies in a bubble of the Brownian marble whenever $\lambda \in [0,6)$. See the figure on the title page for a simulation of the Brownian marble with parameter $\lambda = 3$ where the different bubbles are shaded in different colours, and see Figure \ref{fig:timmarble} for a simulation of a Brownian marble with $\lambda = 3$ with an emphasis on the Brownian paths separating the bubbles.

The Brownian marble $\mathcal{M}(R_\lambda)$ is self-similar in the sense that the process is invariant under any rescaling in which space is rescaled by $c$ and time is rescaled by $c^{-2}$. More specifically, for $\pi \in \mathcal{M}(R_\lambda)$ define a new path $\pi_c$ with starting point $\sigma_{\pi_c} = c^2 \sigma_\pi$ and 
\begin{align*}
\pi_c(t) := \frac{1}{c} \pi(c^2 t).
\end{align*}
Then for any $c > 0$, the rescaled collection of paths $\mathcal{M}(R_\lambda)_c := \{ \pi_c : \pi \in \mathcal{M}(R_\lambda) \}$ has the same law as $\mathcal{M}(R_\lambda)$. 
This follows from the Brownian scaling property, together with the fact that the fragmentation rate function $R(g) = \lambda/g^2$ has the simple scaling property $R_\lambda(cg) = \frac{1}{c^2} R_\lambda(g)$. 

\begin{figure}[h!]
\centering
\includegraphics[width=0.8\textwidth]{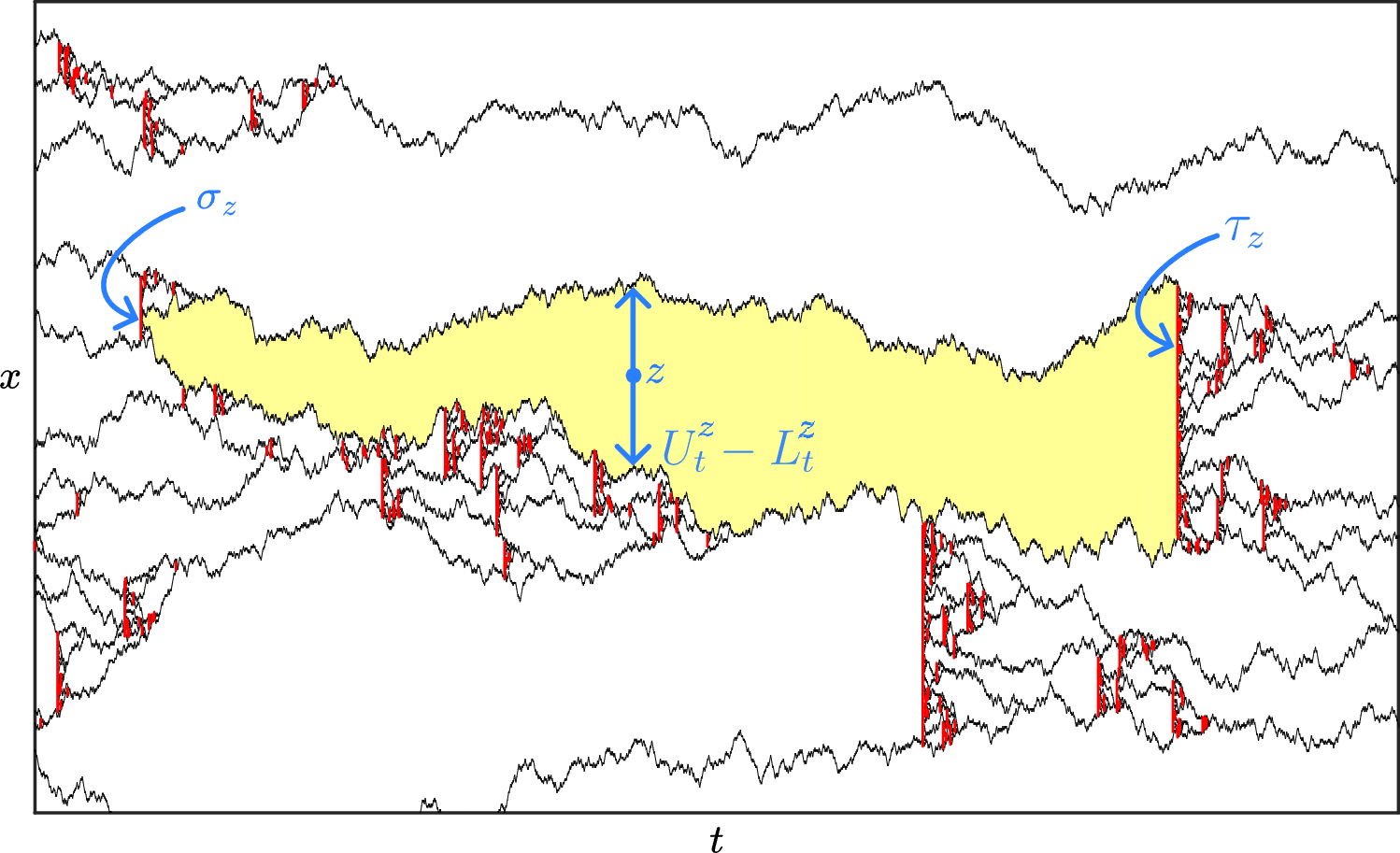}
\caption{The bubble $\mathcal{B}^z$ containing a fixed point $z = (t,x)$ is depicted in yellow. The birth time of this bubble is $\sigma_z$, the death time is $\tau_z$, and for $s \in [\sigma_z,\tau_z]$, the height process of this bubble is the stochastic process $(g^z_s)_{s \in [\sigma_z,\tau_z]}$ defined by $g^z_s =U^z_s - L^z_s$.}
\label{fig:bubble}
\end{figure}

This self-similarity endows the process with a host of nice distributional properties. 
For example, let $\mathbb{P}^{(\lambda)}$ denote the law of the Brownian marble with parameter $\lambda \in [0,6)$. Then we are able to give a precise probabilistic description of the 
marginal law under $\mathbb{P}^{(\lambda)}$ of the bubble $\mathcal{B}^z$ containing a point $z= (t,x)$ of $(0,\infty) \times \mathbb{R}$. Recall from \eqref{eq:bubbly} that we write $\mathcal{B}^z := \{ (s,y) : \sigma_z < s < \tau_z , L^z_s < y < U^z_s \}$
for the bubble containing $z$. See Figure \ref{fig:bubble} for a diagram. Then various functionals of this bubble have explicit distributions under $\mathbb{P}^{(\lambda)}$. For example, we will show that the law of the birth time $\sigma_z$ of the bubble containing $z = (t,x)$ satisfies
\begin{align} \label{eq:bubblestart}
\mathbb{P}^{(\lambda)} \left( \sigma_z/t < \zeta \right) := \frac{\sin (\pi \beta)}{\beta} \int_0^\zeta u^{-(1-\beta)} (1-u)^\beta \mathrm{d}u \quad \text{where $\beta = \frac{1}{2}(\sqrt{4 \lambda+1}-1)$.}
\end{align}
That is, $\sigma_z/t$ is $\mathrm{Beta}(\beta_\lambda,1-\beta_\lambda)$ distributed under $\mathbb{P}^{(\lambda)}$. 
Observe that as $\lambda \uparrow 6$, the parameter $\beta = \beta_\lambda \uparrow 1$, and the random variable $\sigma_z/t$ converges in distribution to the point mass at one. In other words, as $\lambda$ increases up to $6$, the bubble containing a given point $z = (t,x)$ becomes more and more likely to have started shortly before time $t$. 

To provide another example here, we have the expression
\begin{align} \label{eq:bubbleheight}
\mathbb{P}^{(\lambda)}( (U^z_t-L^z_t)^2/2t < \zeta ) = \Gamma(c_\lambda)^{-1} \int_0^\zeta u^{c_\lambda} e^{ -u} \mathrm{d}u/u, \quad c_\lambda = (6-\lambda)/4,
\end{align}
for the law of the height $U^z_t - L^z_t$ of the bubble containing $z$ at the vertical cross-section containing $z$. In other words, $(U^z_t-L^z_t)^2/2t$ is Gamma distributed with shape parameter $c_\lambda$ (and rate parameter $1$). We note that the parameter $c_\lambda$ is decreasing in $\lambda$. Since Gamma random variables with larger shape parameters tend to be larger, this reflects the fact that the bubbles of the Brownian marble are typically larger when $\lambda$ is smaller, and that the size of a typical bubble converges to zero as $\lambda \uparrow 6$. In fact, \eqref{eq:bubbleheight} can be shown to follow from \eqref{eq:bubblestart} and the following more general result:

\begin{theorem} \label{thm:dbessel}
Let $z = (t,x)$ be a point of $(0,\infty) \times \mathbb{R}$, and let $\sigma_z$ be the birth time of the bubble containing $z$. Define a stochastic process by setting
\begin{align*}
X^z_s :=  \frac{U^z_s-L^z_s}{\sqrt{2}}, \qquad s \in [\sigma_z,\tau_z].
\end{align*}
Conditional on $\sigma_z$, the stochastic process $(X^z_s)_{s \in [\sigma_z,t]}$ has the law of a Bessel process of dimension \[d_\lambda = 3+\sqrt{4 \lambda + 1}\] initiated from $X^z_{\sigma_z} = 0$, but with its law size-biased by $X_t^{-\beta_\lambda}$, where $\beta_\lambda := \frac{1}{2}(\sqrt{4 \lambda + 1} - 1)$. After time $t$, $(X^z_s)_{s \in [t, \tau_z]}$ has the law of a standard Brownian motion killed at rate $\lambda/(2(X^z_s)^2)$, with $\tau_z$ denoting the killing time. 
\end{theorem}

Note that the self-similarity of the Brownian marble is implicit in the equations \eqref{eq:bubblestart}, \eqref{eq:bubbleheight} and Theorem \ref{thm:dbessel}.

In fact, for any fixed points $z_1,\ldots,z_k$ of $[0,\infty) \times \mathbb{R}$ we are able to give an intrinsic probabilistic description of the joint distribution of the bubbles $\mathcal{B}^{z_1},\ldots,\mathcal{B}^{z_k}$ in terms of objects which we call interacting Brownian veins. Of course, it is entirely possible if $z_i$ is close to $z_j$ that these points are contained in the same bubble, so that $\mathcal{B}^{z_i} = \mathcal{B}^{z_j}$. See Section \ref{sec:intrinsic} for further details.



\subsection{The branching analogue}\label{sect:branching}

We close our introductory discussion by mentioning that the $R$-marble has a natural branching analogue, which is obtained, roughly speaking, by letting the sizes of the intervals in the $R$-marble fluctuate independently of one another. Of course, in the $R$-marble the sizes of neighbouring intervals are dependent in that the boundary between them fluctuates according to a common Brownian motion, so that if one gets larger the other gets smaller.

We will outline the construction using a slightly different truncation which will be more suitable for the branching structure.
 Let $N \geq 1$ be our degree of truncation. Our branching process begins at time zero with, say, $k$ particles with respective masses $a_1,\ldots,a_k > 0$. As time passes, the particles in the process experience the following dynamics, independently of all other particles in the system:
\begin{itemize}
\item The mass process $(g_t)_{t \geq 0}$ of each particle fluctuates according to $\sqrt{2}$ times a standard Brownian motion. If the size of a particle ever hits zero, it is killed. 
\item At instantaneous rate $R(g_t)$, a particle of mass $g_t$ breaks into $N$ equally sized particles of mass $g_t/N$. 
\end{itemize}
We write $P^{R,N}_{a_1,\ldots,a_k}$ for the law of this process starting with $k$ particles of sizes $a_1,\ldots,a_k$. 

This process may be formulated as a measure-valued branching process $(\mu_t)_{t \geq 0}$, where, if at time $t$ there are $N(t)$ particles alive in the system of sizes $a_1,\ldots,a_{N(t)}$ (listed in some order), then $\mu_t$ is the sum of Dirac masses
\begin{align*}
\mu_t := \sum_{i=1}^{N(t)} \delta_{a_i}.
\end{align*}
This process has the branching property in that for suitably regular functions $\phi:(0,\infty) \to \mathbb{R}$, writing $\langle \phi, \mu \rangle := \int_{(0,\infty)} \phi(x) \mu_t(\mathrm{d}x)$ we  have
\begin{align*}
\mathbf{E}_{\mu_0} [ \exp \left\{ - \langle \phi, \mu_t \rangle \right\} ]= \exp \left\{ - \langle v_t(\phi) , \mu_0 \rangle \right\},
\end{align*}
for a certain semigroup of operators $(v_t)_{t \geq 0}$. See Li \cite{li} or Etheridge \cite[Chapter 1]{etheridge} for further information.

Drawing on an analogy with Theorem \ref{thm:main}, we conjecture the following about this measure-valued stochastic process:

 \begin{conj} \label{conj:branching}
Let $R$ be quadratic-regular, in that it satisfies one of the conditions in \eqref{eq:squareregular}. 
Let $(\mu^N)_{N \geq 1}$ be a sequence of measure-valued stochastic processes $\mu^N := (\mu_t^N)_{t \geq 0}$  such that for each $N$, $\mu^N$ is distributed according to $P^{R,N}_{1/N,\ldots,1/N}$, the subscript denoting that we begin with $N$ distinct particles of size $1/N$. 
Then we have the following phase transition: 
\begin{itemize}
\item If $\limsup_{g \downarrow 0} g^2R(g)  < 6$, then $(\mu^N_t)_{t >0}$ converges in distribution to a nondegenerate stochastic process $(\mu_t)_{t \geq 0}$ such that for each $t > 0$, $\mu_t$ is almost-surely a finite point measure on $(0,\infty)$. 
\item If $\liminf_{g \downarrow 0} g^2R(g) \geq 6$, $(\mu_t^N)_{t \geq 0}$ converges in distribution to the trivial stochastic process $(\mu_t)_{t \geq 0}$ such that $\mu_t$ is almost surely the zero measure on $(0,\infty)$ for all $t \geq 0$.
\end{itemize}
\end{conj}

This conjecture is based on the understanding that a bubble of the $R$-marble is somehow equivalent to the lifetime of a particle of positive size in the above branching process.

To make precise sense of the convergence in distribution in Conjecture \ref{conj:branching}, one may formulate each $(\mu_t^N)_{t \geq 0}$ as a stochastic process in the Skorokhod topology $D((0,\infty),\mathrm{M}(0,\infty))$ where 
$\mathrm{M}(0,\infty)$, is the space of  measures on $(0,
\infty)$ equipped with the vague topology.



On the event that Conjecture \ref{conj:branching} is true, it would then be particularly interesting to look at the case where the rate function takes the form $R(g) = \lambda/g^2$, as in this case the branching process would enjoy self-similarity in space and time.

\subsection{Related work}

The notion of the Brownian web was first considered by Arratia \cite{arratia} in his PhD thesis, but 
it was not until work of Fontes et al.\ \cite{FINR} (preceded by Toth and Werner \cite{toth}), that the precise definition of the process was established in a suitable topology. To this day, there is a large body of work on the Brownian web and its variants \cite{SSS, SS, FINR, FINR2, EF, BGS, HW, RSS, CSST, CH, CH2,VV}.

A related object called the \emph{Brownian net} 
can be seen as another instance of branching structure embedded in the Brownian web. At the discrete level, the discrete net is obtained by considering nearest-neighbour one dimensional random walks branching into two particles at neighbouring sites with probability $\epsilon$. When 
$\epsilon\to0$, the resulting set of branching-coalescing paths converges to the Brownian net after diffusive scaling. See \cite{SSS} for a review.

 

The $R$-marble process may be regarded as a spatially dependent growth-fragmentation process. 
Such processes have been studied extensively by Bertoin, see e.g.\ the research monograph \cite{bertoinbook}, as well as work by Berestycki \cite{berestycki}. 

Kyprianou et al.\ \cite{KPRS} consider a variant of Kingman's coalescent starting with singleton blocks indexed by the integers, and where every pair of blocks coalesce at rate $c > 0$, and every block breaks into its constituent singletons at rate $\lambda > 0$. They show that for each fixed $t > 0$, the block counting process is almost surely finite 
if and only if $\lambda/2c < 1$. See also \cite{KPR}. 

The intrinsic probabilistic description we provide for the emergence of bubbles in the $R$-marble relies heavily on a beautiful identity which appears explicitly in Warren \cite{warren}, but appears implicitly in earlier work due to Dubedat \cite{dubedat} and Soucaliuc, Toth and Werner \cite{STW00}. See also \cite{toth, SW, BN}. Related processes involving interlacing and intertwining diffusions are considered in \cite{ANM, GS, oconnell}. Discrete time and space analogues of these identities have also been noted in the integrable probability literature, see e.g.\ \cite{assiotis}. 

The Brownian marble is a spatial analogue of a self-similar Markov process \cite{rivero1, rivero2, CPR, BY, JY, patie, CKPR}. The weak convergence of L\'evy processes to self-similar processes (which is similar at heart to our work in Section 2), is considered in \cite{CC}.

\subsection{Overview}
The remainder of this article is structured as follows.
\begin{itemize}
\item In Section \ref{sec:bessel} we construct a process called the $R$-Bessel process. This is a Bessel-$3$ process $X := (X_t)_{t \geq 0}$ with the additional dynamic that it jumps back to the origin at instantaneous rate $R(\sqrt{2}X_t)$. We will see in the sequel that $X$ is a proxy for the emergence of bubbles in the $R$-marble. The main result of this section, Theorem \ref{thm:newtransition}, states that the $R$-Bessel process undergoes a phase transition like that in Theorem \ref{thm:main}.

\item In the following section, Section \ref{sec:vein}, we introduce a generalisation of the $R$-Bessel process called the $R$-vein.   
\item In Section \ref{sec:explore} we discuss how the $R$-marble can have its bubbles explored by using dual web paths emitted backwards in time from a dense set. The emergence of bubbles along these dual paths leads to copies of the $R$-veins introduced in Section \ref{sec:vein}. 

\item In Section \ref{sec:mainproof}, we reverse the exploration of the previous section, and use veins leading to a dense set of points to \emph{construct} the $R$-marble. This construction is robust to different choices of rate function $R$, and allows the construction of the $R$-marble for a quadratic-regular $R$ as a limit of truncated processes. This leads to a proof of our main result, Theorem \ref{thm:main}.

\item In a brief final section, Section \ref{sect:spine}, 
we show that the $R$-Bessel process can be interpreted as the spinal process for the growth-fragmentation process as introduced in Section \ref{sect:branching}. This observation is a natural first step towards proving Conjecture \ref{conj:branching}.
\end{itemize}

A word on notation: throughout the article we will use blackboard letters $\mathbb{P}^R$ and $\mathbb{P}^{(\lambda)}$ to refer to the probability laws for the $R$-marble and the Brownian marble with parameter $\lambda \geq 0$, and $\mathbf{P}^{(\alpha)}_x,\mathbf{Q}_x^R$ etc.\ for probability laws governing (jump-)diffusions. 

\section{The $R$-Bessel process} \label{sec:bessel}

We will ultimately prove our main results on the construction, existence, phase transition, and distributional properties of the Brownian marble by tracing paths of the dual Brownian web backwards in time, and then, loosely speaking, following these paths forwards in time and observing how bubbles may emerge on these paths. It transpires that the formation of bubbles along these paths is intimately related to Bessel processes and their changes of measure. 

\subsection{Bessel processes of dimension $d = 2\alpha+1$} 

Recall that the Bessel-$3$ process (or the Bessel process of dimension $d=3$) can be defined by a change of measure of Brownian motion. If, under a probability law $\mathbf{P}^{(0)}_x$, $(X_t)_{t \geq 0}$ is a Brownian motion starting from $x > 0$, then we may define a change of measure $\mathbf{P}^{(1)}_x$ by setting 
\begin{equation} \label{eq:b3eq}
\mathrm{d}\mathbf{P}^{(1)}_x/\mathrm{d}\mathbf{P}^{(0)}_x := (X_t/x)\mathrm{1}\{ X_s > 0 ~\forall s\in[0,t] \}.
\end{equation}
Under $\mathbf{P}^{(1)}_x$, $(X_t)_{t \geq 0}$ has the law of a Bessel-$3$ process started from $x$. An alternative description is that under $\mathbf{P}^{(1)}_x$, $(X_t)_{t \geq 0}$ has the law of a Brownian motion started from $x$ and conditioned (in the sense of Doob) to never hit zero, i.e.\
\begin{align*}
\mathbf{P}^{(1)}_x(A ) := \lim_{t \uparrow \infty} \mathbf{P}^{(0)}_x ( A | X_s > 0 ~\forall ~s \in [0,t] ) \text{ for events $A$ in } \bigcup_{u \geq 0} \sigma( X_s : s \leq u ).
\end{align*}
Plainly then, $\mathbf{P}^{(1)}_x(X_t > 0 ~\forall t ) = 1$. 

It is possible to make sense of the limiting law $\mathbf{P}^1_0 := \lim_{ x \downarrow 0} \mathbf{P}_x^1$; we call this process the standard Bessel-$3$ process.

More generally, there is a notion of a Bessel process with dimension $d=  2\alpha+1$ (the parameter $d$ need not be an integer). We write $\mathbf{P}^{(\alpha)}_x$ for the law of this process initiated from $x > 0$. This law may be defined by a change of measure relative to the law of Brownian motion initiated from $x > 0$ via the Radon-Nikodym derivative
\begin{align} \label{eq:beschange}
\frac{ \mathrm{d} \mathbf{P}_x^{(\alpha)} }{ \mathrm{d} \mathbf{P}_x^{(0)} } \Bigg|_{\mathcal{F}_t} = N_t^{(\alpha)} := \mathrm{1} \{ X_s > 0 ~ \forall s \in (0,t] \} (X_t/x)^\alpha \exp \left\{ - \frac{\alpha(\alpha-1)}{2} \int_0^t \frac{\mathrm{d}s}{X_s^2} \right\};
\end{align}
see \cite{lawler}. Note \eqref{eq:b3eq} is the special case $\alpha=1$ of \eqref{eq:beschange}. 
It is a straightforward exercise using the It\^{o} formula to establish that $N_t^{(\alpha)}$ is indeed a $\mathbf{P}_x^{(0)}$-martingale. 
If $(X_t)_{t \geq 0}$ has the law $\mathbf{P}_x^{(\alpha)}$, then the stochastic process
\begin{align} \label{eq:brownian}
B^{(\alpha)}_t = X_t - X_0 - \alpha \int_0^t \frac{1}{X_s} \mathrm{d}s  
\end{align}
is a standard Brownian motion under  $\mathbf{P}_x^{(\alpha)}$.
The transition density of $(X_t)_{t \geq 0}$ under $\mathbf{P}_x^{(\alpha)}$ is given by  
\begin{align} \label{eq:besseltran}
\mathbf{P}^{(\alpha)}_x ( X_t \in \mathrm{d}y )  = \frac{1}{t} \frac{y^{\alpha+1/2}}{x^{\alpha-1/2}} \exp \left\{ - \frac{x^2+y^2}{2t} \right\} I_{\alpha - 1/2} (xy/t) \mathrm{d}y,
\end{align}
where for $\nu \geq 0$, $I_{\nu}$ is the modified Bessel function of the first kind with parameter $\nu$:
\begin{align*}
I_\nu(u) = \sum_{ k =0}^\infty \frac{ (u/2)^{2k+\nu}}{\Gamma(k+1) \Gamma(k+\nu +1)};
\end{align*}
see e.g.\ \cite[Chapter XI]{RY}.

Finally, using the chain rule for Radon-Nikodym derivatives, we may extract from \eqref{eq:beschange} the relation 
\begin{align} \label{eq:urquell}
\frac{ \mathrm{d} \mathbf{P}_x^{(\alpha)} }{ \mathrm{d} \mathbf{P}_x^{(1)} } \Bigg|_{\mathcal{F}_t} = N_t^{(\alpha/1)} := \mathrm{1} \{ X_s > 0 ~ \forall s \in [0,t] \} (X_t/x)^{\alpha-1} \exp \left\{ - \frac{\alpha(\alpha-1)}{2} \int_0^t \frac{\mathrm{d}s}{X_s^2} \right\}. 
\end{align}

\subsection{Recurrent extensions of Bessel-$3$ processes} \label{sec:introrec}

 We now introduce the following process, which we will see in the sequel acts as a proxy for the height process of an emerging bubble in the $R$-marble. Throughout the remainder of the article, $R$ and $R'$ will always denote measurable functions $R,R':(0,\infty) \to [0,\infty)$.

\begin{definition} \label{df:Rbes}
Let $R:(0,\infty) \to [0,\infty)$ be bounded. Under a probability law $\mathbf{Q}^{R}_x$, we define the \textbf{R-Bessel process} as the Markov process with the following dynamics:
\begin{itemize}
\item Starting from $x \geq 0$, $(X_t)_{t \geq 0}$ diffuses according to a Bessel-$3$ process.
\item At rate $R(\sqrt{2}X_t)$ the process jumps back to zero. 
\end{itemize}
\end{definition}

(The scaling by $\sqrt{2}$ in Definition \ref{df:Rbes} simplifies various formulas in the sequel.) 

Definition \ref{df:Rbes} characterises a well-defined jump-diffusion for any bounded function $R$.
We now aim to construct the $R$-Bessel process for any decreasing function $R$, even if it is unbounded. 
We begin with the following lemma:
\begin{lemma}
Let $R,R':(0,\infty) \to (0,\infty)$ be bounded and decreasing measurable functions satisfying $R'(g) \geq R(g)$. Then there is a coupling of stochastic processes $X = (X_t)_{t \geq 0}$ and $X' = (X_t)_{t \geq 0}$ with respective laws ${\bf Q}_x^R$ and ${\bf Q}_x^{R'}$ such that $X'_t \leq X_t$ for all $t \geq 0$. 
\end{lemma}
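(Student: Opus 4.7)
My plan is to realise both processes on a single probability space, driving the diffusive parts by a common Brownian motion $W$ and triggering the jumps-to-zero by a common Poisson point process $\Pi$ on $[0,\infty) \times [0,\infty)$ of intensity $\mathrm{d}t\,\mathrm{d}u$, independent of $W$. Because $R$ and $R'$ are bounded, only finitely many points of $\Pi$ fall in $[0,T] \times [0, \|R'\|_\infty]$ for each $T > 0$, so the candidate jump epochs form a locally finite sequence $\tau_1 < \tau_2 < \cdots$ with marks $u_1, u_2, \ldots$. Between consecutive $\tau_k$ I evolve both processes as the strong solution, driven by $W$, of the Bessel-$3$ SDE $\mathrm{d}Z_t = \mathrm{d}W_t + Z_t^{-1}\,\mathrm{d}t$ started from the current state. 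At each $\tau_k$, I reset $X$ to $0$ iff $u_k \leq R(\sqrt{2} X_{\tau_k-})$, and reset $X'$ to $0$ iff $u_k \leq R'(\sqrt{2} X'_{\tau_k-})$. A standard thinning argument for Poisson point processes then confirms that $X$ has law $\mathbf{Q}_x^R$ and $X'$ has law $\mathbf{Q}_x^{R'}$.

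Next I would verify the pathwise inequality $X'_t \leq X_t$ by induction over $k \geq 0$. The base case is $X_0 = X'_0 = x$. Assuming $X'_{\tau_k} \leq X_{\tau_k}$, SDE comparison for the Bessel-$3$ equation propagates the inequality across $(\tau_k, \tau_{k+1})$: the drift $z \mapsto 1/z$ is decreasing and both solutions share $W$, so the smaller process has the larger drift and cannot overtake the larger one. At the candidate time $\tau_{k+1}$ itself, three cases arise. If neither process jumps, the inequality is preserved. If only $X'$ jumps, then $X'_{\tau_{k+1}} = 0 \leq X_{\tau_{k+1}}$. If $X$ jumps, meaning $u_{k+1} \leq R(\sqrt{2} X_{\tau_{k+1}-})$, then using $R' \geq R$ pointwise, the monotonicity of $R'$, and $X'_{\tau_{k+1}-} \leq X_{\tau_{k+1}-}$, we obtain $u_{k+1} \leq R(\sqrt{2} X_{\tau_{k+1}-}) \leq R'(\sqrt{2} X_{\tau_{k+1}-}) \leq R'(\sqrt{2} X'_{\tau_{k+1}-})$, forcing $X'$ to jump simultaneously, so both equal $0$. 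In every case the ordering survives.

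I expect the main obstacle to be the Bessel-$3$ comparison step when the smaller process has just been reset to $0$, because the drift $1/z$ is singular there and the usual comparison theorem for SDEs does not apply verbatim. I would handle this by the standard localisation trick: approximate Bessel-$3$ from $0$ by Bessel-$3$'s started from $\varepsilon > 0$, apply the classical comparison on the event that both processes remain above $\varepsilon$, and take $\varepsilon \downarrow 0$, using that Bessel-$3$ from $0$ is the monotone limit of Bessel-$3$'s from positive starting points. A second, more minor point is the case where the two trajectories meet at a positive value between jumps: strong uniqueness of the Bessel-$3$ SDE then forces them to coincide until the next candidate jump, which is compatible with $X' \leq X$ and does not disrupt the induction.
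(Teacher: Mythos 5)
Your proposal is correct, and the high-level architecture matches the paper's: couple the diffusive stretches so that the order $X' \leq X$ is preserved, then use the monotonicity of $R'$ together with $R' \geq R$ to couple the jump clocks by thinning so that $X'$ jumps whenever $X$ does. The difference is in how the diffusions between jump epochs are coupled. You drive both by a \emph{common} Brownian motion $W$ and invoke pathwise uniqueness of the Bessel-$3$ SDE to rule out crossings (with the $\varepsilon$-localisation you correctly flag to deal with the singular drift at $0$). The paper instead runs the two Bessel-$3$ excursions as \emph{independent} processes that are declared to coalesce the first time they meet, after which they are identical until the next jump disorders them; crossing is prevented by continuity plus coalescence rather than by strong uniqueness. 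Both couplings yield the right marginals and the pathwise ordering. Your common-driving-noise version is closer to the standard SDE-comparison toolkit and is arguably more explicit about the Poisson thinning construction; the paper's independent-then-coalescing version is the one used throughout their Brownian-web framework (it is exactly how coalescing Brownian motions and the later vein couplings are built), so it fits more seamlessly with the surrounding arguments. Neither has a gap; they are interchangeable for this lemma.

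One small remark on your execution: when invoking the thinning argument, you should bound the intensity strip by $\sup R' = \|R'\|_\infty$, as you implicitly do; since $R' \geq R$ this automatically dominates $\|R\|_\infty$ as well, so a single locally finite set of candidate times suffices for both processes, which is what your induction requires.
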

\begin{proof}
First of all, for periods under which both processes are continuous, we run the processes as independent Bessel-$3$ processes that coalesce when they meet. 
Now note that the fact that $R$ and $R'$ are decreasing means that we can couple the processes in such a way that will guarantee that $X'$ will jump to zero whenever $X$ does.
\end{proof} 

As a corollary, we make sense of the $R$-Bessel process for unbounded decreasing $R$.
\begin{lemma} \label{lem:beslim}
Let $R:(0,\infty) \to [0,\infty)$ be a decreasing function. Then there is a probability space carrying a sequence of stochastic processes $X^n := (X^n_t)_{t \geq 0}$ such that each $(X^n_t)_{t \geq 0}$ is distributed according to $\mathbf{Q}_x^{R \wedge n}$ and such that we have 
\begin{align*}
X^n_t \to X_t \text{ pointwise for each $t \geq 0$},
\end{align*}
for some limiting stochastic process $(X_t)_{t \geq 0}$. 
    \end{lemma}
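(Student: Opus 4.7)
The plan is to construct the sequence $(X^n)_{n \geq 1}$ on a single probability space so that the sample paths are almost-surely monotone decreasing in $n$, and then to define $X_t$ as the pointwise limit. The key input is the preceding pairwise coupling lemma, which I would bootstrap into a joint coupling.

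First I would verify monotonicity of the truncations. Each $R_n := R \wedge n$ is bounded and decreasing (the minimum of two decreasing functions is decreasing), so $\mathbf{Q}_x^{R_n}$ is well-defined by Definition \ref{df:Rbes}. Since $R_{n+1}(g) = R(g) \wedge (n+1) \geq R(g) \wedge n = R_n(g)$ pointwise, the preceding lemma applied with rates $R_n$ and $R_{n+1}$ produces a pairwise coupling of laws $\mathbf{Q}_x^{R_n}$ and $\mathbf{Q}_x^{R_{n+1}}$ under which $X^{n+1}_t \leq X^n_t$ for every $t \geq 0$, almost surely.

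Next I would upgrade this family of pairwise couplings to a coupling of the whole sequence. The cleanest route is an induction on $n$: having constructed $X^1,\ldots,X^n$ on some probability space with $X^{k+1}_t \leq X^k_t$ for $k < n$ and all $t$, I extract from the preceding lemma a regular conditional distribution for $X^{n+1}$ given $X^n$ whose marginal is $\mathbf{Q}_x^{R_{n+1}}$ and which satisfies $X^{n+1}_t \leq X^n_t$ pointwise. Feeding these conditional kernels into Kolmogorov's extension theorem on the canonical path space $\prod_{n \geq 1} C([0,\infty),[0,\infty))$ yields a single probability measure carrying all of $(X^n)_{n \geq 1}$ with the correct marginal laws and with almost-sure monotonicity $X^{n+1}_t \leq X^n_t$ for every $n$ and every $t$.

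With such a monotone coupling in hand, the conclusion is immediate: for each sample point and each fixed $t \geq 0$, the sequence $(X^n_t)_{n \geq 1}$ is nonincreasing and bounded below by $0$ (Bessel-$3$ processes are nonnegative, and the downward jumps preserve nonnegativity), so it converges to a limit $X_t \in [0,\infty)$. The main obstacle I expect is the organisational step of constructing measurable regular conditional kernels from the pairwise coupling, so that Kolmogorov extension applies cleanly; an equivalent and more concrete alternative would be to drive all $X^n$'s simultaneously from a common Brownian motion (for the continuous Bessel-$3$ pieces) together with a common Poisson point process on $[0,\infty) \times [0,\infty)$ in which the mark $u$ at time $t$ triggers a jump to $0$ for $X^n$ whenever $u \leq R_n(\sqrt{2} X^n_{t-})$, but for that construction one must verify monotonicity by hand rather than inheriting it from the preceding lemma.
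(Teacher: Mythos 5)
Your proposal follows essentially the same route as the paper: invoke the preceding pairwise coupling lemma for $R_n = R\wedge n$ versus $R_{n+1} = R\wedge (n+1)$ to obtain almost-sure pathwise monotonicity $X^{n+1}_t \le X^n_t$, then conclude pointwise convergence from monotone boundedness below by $0$. The paper's own proof is terse on precisely the point you flag as the main obstacle — it simply invokes the pairwise coupling and then speaks as if all the $X^n$ already live on one probability space with the full chain of inequalities holding simultaneously — whereas you make that step explicit via conditional kernels and Kolmogorov extension, or alternatively via a common driving Brownian motion and Poisson point process; either is a sound way to fill in what the paper leaves implicit, so your version is a slightly more careful rendering of the same argument.
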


\begin{proof}
Let $n' \geq n$. Then since $R(g) \wedge n'$ and $R(g) \wedge n$ are both decreasing and satisfy $R(g) \wedge n' \geq R(g) \wedge n$, the previous lemma guarantees a natural coupling between processes $X^{n'}$ and $X^n$ with respective laws $\mathbf{Q}_x^{R \wedge n'}$ and $\mathbf{Q}_x^{R \wedge n}$ such that $X^{n'}_t \leq X^n_t$ for each $t \geq 0$. Thus for each $t$ we have a sequence of random variables $X^1_t \geq X^2_t \geq \ldots \geq 0$. It follows that for each $t$ these random variables converge to a limit; we call this limit $X_t$. 
\end{proof}

If $R$ is decreasing, we write $\mathbf{Q}_x^R$ for the law of stochastic process occurring as a limit in Lemma \ref{lem:beslim}, and refer to this process as the $R$-Bessel process. Thus the $R$-Bessel process is now defined for all $R$ either bounded or decreasing.

Lemma \ref{lem:beslim} does not preclude the possibility that the limit process $(X_t)_{t \geq 0}$ is equal to the zero process under $\mathbf{Q}_0^R$. 
Indeed, bearing in mind our claim that $(\sqrt{2}X_t)_{t \geq 0}$ may be considered a proxy for the height process of the evolution of a sequence of bubbles in time in the $R$-marble, the following result may be regarded as a preliminary version of the phase transition characterised in our main result, Theorem \ref{thm:main}:

\begin{theorem} \label{thm:newtransition}
Let $R:(0,\infty) \to [0,\infty)$ be a quadratic-regular rate function (see \eqref{eq:squareregular}).  Then we have the phase transition: 
\begin{itemize}
\item If $\liminf_{g \downarrow 0} g^2R(g) \geq 6$,  then $\mathbf{Q}_0^R ( X_t = 0  ~ \forall t  \geq 0) = 1$.
\item If $\limsup_{g \downarrow 0} g^2R(g)  < 6$, then for all $t > 0$ we have $\mathbf{Q}_0^R( X_t = 0) = 0$.
\end{itemize}
\end{theorem}

The majority of Section \ref{sec:bessel} is dedicated to the proof of Theorem \ref{thm:newtransition}.

Our proof idea will involve coupling stochastic processes with law $\mathbf{Q}_0^R$ with that of $\mathbf{Q}_0^{R_\lambda}$ where $R_\lambda(g) = \lambda/g^2$ for a suitable value of $\lambda$. In fact, once we prove Theorem \ref{thm:newtransition} in the special case where $R$ takes the form $R=R_\lambda$ for some $\lambda > 0$, the general case will follow quickly from a coupling argument. 

Thus we will be particularly interested in the laws $\mathbf{Q}_0^{R_\lambda}$ associated with $R_\lambda(g) = \lambda/g^2$.

After the proof of Theorem \ref{thm:newtransition}, we will explore some distributional properties of the laws $\mathbf{Q}_0^{R_\lambda}$.

\subsection{Long term survival under $\mathbf{Q}_0^{R_\lambda \wedge \lambda/2}$} \label{sec:survival}

We now consider the probability law $\mathbf{Q}_x^{R_\lambda \wedge (\lambda/2)}$ associated with a fragmentation rate 
\begin{align} \label{eq:Rtrunc}
R(g) = (\lambda/g^2) \wedge \lambda/2. 
\end{align}
Truncating at this level simplifies various calculations, since with $R$ as in \eqref{eq:Rtrunc} we have 
\begin{align*}
R(\sqrt{2}x) = \frac{\lambda}{2} \frac{1}{(x \vee 1)^2}.
\end{align*}
In other words, the truncation point happens at the height $x=1$. 

Our next proposition describes, under $\mathbf{Q}_0^{R_\lambda \wedge \lambda/2}$, the asymptotic probability of no fragmentation occurring for a large period of time, and the conditional behaviour of $(X_s)_{s \geq 0}$ on this event.

\begin{proposition} \label{prop:larget}
Define $\alpha = \alpha_\lambda$ to be the positive solution to $\alpha(\alpha-1) = \lambda$, so that $\alpha_\lambda = (1 + \sqrt{4 \lambda+1})/2$. Then for some $C_\lambda > 0$ we have 
\begin{align} \label{eq:Funct30}
\lim_{t \to \infty} t^{\frac{\alpha-1}{2} } \mathbf{Q}_0^{R_\lambda \wedge \lambda/2}  \left(  X_s > 0~ \forall s \in (0,t] \right) = C_\lambda.
\end{align}
Moreover, given a bounded and measurable functional $G( X_u : 0 \leq u \leq 1) $ on continuous paths $(X_u: 0 \leq u \leq 1)$ we have
\begin{align} \label{eq:Funct40}
&\lim_{t \to \infty} \mathbf{Q}_0^{R_\lambda \wedge \lambda/2}  \left[G(t^{-1/2}X_{ut} : 0 \leq u \leq 1  ) \Big|  X_s > 0 ~\forall s \in (0,t] \right] \nonumber \\
&=\mathbf{P}_0^{(\alpha)} \left[  \frac{X_1^{-(\alpha-1)}}{ \mathbf{P}_0^{(\alpha)} [ X_1^{-(\alpha-1)} ] } G(X_u : 0 \leq u \leq 1 ) \right],
\end{align}
where $\mathbf{P}_0^{(\alpha)}$ is the law of a Bessel process with dimension $2\alpha+1$.
\end{proposition}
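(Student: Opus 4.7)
The plan is to derive Proposition \ref{prop:larget} from the Bessel change-of-measure identity \eqref{eq:urquell}, specialised to the exponent $\alpha = \alpha_\lambda$ satisfying $\alpha(\alpha-1) = \lambda$. The starting observation is that the truncated rate function $R = R_\lambda \wedge (\lambda/2)$ satisfies $R(\sqrt{2}x) = \lambda/(2(x^2 \vee 1))$, so that by compensating the jump rate one obtains the exact identity
\begin{equation*}
\mathbf{Q}_0^{(\lambda),\lambda/2}\!\left[G\bigl((t^{-1/2}X_{ut})_{u \leq 1}\bigr)\, \mathbf{1}_{\{\tau > t\}}\right]
= \mathbf{P}_0^{(1)}\!\left[G\bigl((t^{-1/2}X_{ut})_{u \leq 1}\bigr) \exp\!\left(-\tfrac{\lambda}{2}\int_0^t \frac{\mathrm{d}s}{X_s^2 \vee 1}\right)\right],
\end{equation*}
where $\tau$ is the first jump time of the $R$-Bessel process and $\mathbf{P}_0^{(1)}$ is the Bessel-$3$ law started at $0$. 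If the truncation were absent, the exponent would be exactly the Girsanov-type correction appearing in \eqref{eq:urquell}: the right-hand side would rewrite as a Bessel-$(2\alpha+1)$ expectation weighted by $X_t^{-(\alpha-1)}$, and Brownian scaling would immediately deliver both the $t^{-(\alpha-1)/2}$ decay rate and the size-biased Bessel-$(2\alpha+1)$ limit described in the proposition.

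To handle the truncation and the degenerate starting point $X_0 = 0$, I will condition on $X_\delta$ via the Markov property at a small fixed $\delta > 0$. On $[0,\delta]$ the integrand is bounded by $1$, so this piece contributes a bounded prefactor tending to $1$ as $\delta \to 0$. On $[\delta,t]$, $X_\delta > 0$ almost surely under $\mathbf{P}_0^{(1)}$, so \eqref{eq:urquell} applies and rewrites the inner expectation as one under $\mathbf{P}_{X_\delta}^{(\alpha)}$, producing the weight $X_\delta^{\alpha-1} X_t^{-(\alpha-1)}$ and a multiplicative remainder
\begin{equation*}
E_{\delta,t} := \exp\!\left(\tfrac{\lambda}{2}\int_\delta^t (X_s^{-2} - 1)\, \mathbf{1}_{\{X_s \leq 1\}}\, \mathrm{d}s\right).
\end{equation*}
Because Bessel-$(2\alpha+1)$ with $2\alpha+1 \geq 3$ is transient, $E_{\delta,t}$ is a.s.\ finite with an a.s.\ limit $E_{\delta,\infty}$. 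The technical crux of the proof will be showing that $X_t^{-(\alpha-1)} E_{\delta,t}$ is uniformly integrable in $t$ under $\mathbf{P}_{X_\delta}^{(\alpha)}$, which will justify the subsequent exchange of limit and expectation.

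The final step is a Brownian scaling argument. With $Y_u := t^{-1/2} X_{ut}$, the rescaled path $(Y_u)_{u \in [\delta/t,1]}$ is, under $\mathbf{P}_{X_\delta}^{(\alpha)}$, a Bessel-$(2\alpha+1)$ process starting from $X_\delta/\sqrt{t}$, and $X_t^{-(\alpha-1)} = t^{-(\alpha-1)/2} Y_1^{-(\alpha-1)}$. As $t \to \infty$ the starting point $X_\delta/\sqrt{t}$ tends to $0$, and by continuous dependence of the Bessel-$(2\alpha+1)$ law on the starting point (consequence of the explicit transition density \eqref{eq:besseltran}) the relevant expectation converges to $\mathbf{P}_0^{(\alpha)}[G(Y) Y_1^{-(\alpha-1)}]$, while the correction $E_{\delta,t}$ passes to $E_{\delta,\infty}$. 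Taking $\delta \to 0$, the $\delta$-dependent factors reassemble into a single finite constant $C_\lambda$, delivering \eqref{eq:Funct30}; dividing through by this decay estimate gives \eqref{eq:Funct40}. The main obstacle I anticipate is the uniform-in-$t$ integrability bound on $X_t^{-(\alpha-1)} E_{\delta,t}$ under $\mathbf{P}_{X_\delta}^{(\alpha)}$, which I would handle either by direct computation using the density \eqref{eq:besseltran} or by a supermartingale comparison obtained from It\^{o}'s formula applied to a suitable power of $X$.
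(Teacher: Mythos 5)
Your proposal follows the same broad strategy as the paper: (i) the Feynman--Kac identity expressing the survival-weighted expectation under $\mathbf{Q}_0^{(\lambda),\lambda/2}$ as a Bessel-$3$ expectation against the multiplicative functional $\exp(-\tfrac{\lambda}{2}\int_0^t (X_s\vee 1)^{-2}\mathrm{d}s)$; (ii) the change of measure \eqref{eq:urquell} to the Bessel-$(2\alpha+1)$ law, producing the weight $X_t^{-(\alpha-1)}$ together with a positive exponential correction; and (iii) Brownian scaling to extract the $t^{-(\alpha-1)/2}$ rate and the size-biased Bessel limit. You handle the degenerate start $X_0=0$ by conditioning at a small time $\delta$ and sending $\delta\to 0$ at the end, whereas the paper first treats $X_0=1$ and then passes to $X_0=0$ via the hitting time $H_1$ of level $1$; that is a cosmetic difference.

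The genuine gap is exactly where you flag it, and it is not a routine step. You propose to justify the exchange of $\lim_{t\to\infty}$ with the Bessel-$(2\alpha+1)$ expectation by proving uniform integrability of $X_t^{-(\alpha-1)}E_{\delta,t}$, to be obtained ``either by direct computation using the density or by a supermartingale comparison.'' A direct UI argument, however, lands you on essentially the same difficulty in another guise: one would need control of $\mathbf{P}_x^{(\alpha)}[E_{\delta,\infty}^{\,p}]$ for some $p>1$, i.e.\ finiteness of exponential moments of $\int_\delta^\infty \mathbf{1}\{X_s\le 1\}(X_s^{-2}-1)\,\mathrm{d}s$, and even finiteness at $p=1$ is part of what must be proved. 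The paper avoids proving UI at all. It introduces the indicator $I(r)=\mathbf{1}\{X_s\ge 1\ \forall s\ge r\}$, notes that on $\{I(r)=1\}$ the exponential correction is truncated to $\int_0^r$, and uses that $G_t$, the exponential correction, and $I(r)$ are all \emph{increasing} path functionals to obtain an FKG-type bound and the sandwich $S(t)\,\mathbf{P}_1^{(1)}[I(r)]\le S(r,t)\le S(t)$. Together with the transience of the Bessel-$3$ process ($\mathbf{P}_1^{(1)}[I(r)]\uparrow 1$) and monotone convergence in $r$ ($K(r)\uparrow K(\infty)<\infty$), this pins down the limit and its finiteness without any explicit moment estimate. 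Your sketch is therefore missing the key lemma that makes the argument close.

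A secondary soft spot: in the final scaling step you treat the convergence of the rescaled-path expectation and the convergence $E_{\delta,t}\to E_{\delta,\infty}$ as if they could be taken separately ``by continuous dependence of the Bessel law on the starting point.'' But $E_{\delta,t}$ and the rescaled path $(t^{-1/2}X_{ut})_{u\le 1}$ are functionals of the same trajectory, so what is actually needed is an asymptotic decoupling of the $O(1)$-timescale behaviour (which governs $E_{\delta,t}$) from the $O(t)$-timescale behaviour (which governs the rescaled path). The paper phrases this as ``$J_t$ is asymptotically independent of $\sigma(X_s:s\le r)$'' and again routes it through the $I(r)$ device. Your write-up should make that decoupling explicit rather than fold it into a continuity-in-the-initial-condition statement.
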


In the proof of Proposition \ref{prop:larget} we will require the following fact.

\begin{lemma} \label{lem:momentcontrol}
For $\alpha \geq 1$, $x \geq 0$ we have $\mathbf{P}_x^{(\alpha)}[X_s^{-(\alpha-1)}] \leq s^{ - \frac{\alpha-1}{2}}$.
\end{lemma}
\begin{proof}
A (lengthy but straightforward) calculation using the transition density \eqref{eq:besseltran} of the Bessel process with dimension $2\alpha+1$ and making good use of the gamma integral tells us that for $\beta > - (2\alpha+1)$ we have 
\begin{align*}
\mathbf{P}_x^{(\alpha)}[X_s^\beta] = (2s)^{\beta/2} e^{-x^2/2s} \sum_{k \geq 0} \frac{ \Gamma(k+\alpha+1/2+\beta/2) }{ \Gamma(k+1) \Gamma(k+\alpha+1/2) } (x^2/2s)^k.
\end{align*}
Setting $\beta = - (\alpha-1)$ we obtain 
\begin{align} \label{eq:lisz}
    \mathbf{P}_x^{(\alpha)}[X_s^{-(\alpha-1)}] = (2s)^{-(\alpha-1)/2} e^{-x^2/2s} \sum_{k \geq 0} \frac{ \Gamma(k+\alpha/2+1) }{ \Gamma(k+1) \Gamma(k+\alpha+1/2) } (x^2/2s)^k.
\end{align}
Since $\alpha \geq 1$, by using the inequality $\Gamma(k+\alpha/2+1) \leq \Gamma(k+\alpha+1/2)$, we see that the sum in \eqref{eq:lisz} is bounded above by $e^{x^2/2s}$; after being generous with constants, from this bound we obtain the result.
\end{proof}
\begin{proof}[Proof of Proposition \ref{prop:larget}]
Suppose that $G(X_u : 0 \leq u \leq 1)$ is a functional on paths taking the form \begin{align} \label{eq:Gform}
G(X_u : 0 \leq u \leq 1 ) = \mathrm{1} \{ X_{u_1} > x_1, \ldots, X_{u_k} > x_k \}
\end{align}
for some $k \geq 0$, $x_i \geq 0$ and $0 < u_1 < \ldots < u_k \leq 1$. We include the possibility $k=0$, in which case $G(\cdot) \equiv 1$. Let us define
\begin{align} \label{eq:presandwich}
S_G(t) := \mathbf{Q}_0^{R_\lambda \wedge \lambda/2}\left[ G(t^{-1/2}X_{ut} : 0 \leq u \leq 1  ) \mathrm{1}_{\{ X_s > 0~ \forall s \in (0,t] \}} \right].
\end{align}
Note that since $(X_s)_{s \geq 0}$ diffuses according to a Bessel-$3$ process (i.e.\ a process with law $\mathbf{P}_0^{(1)}$) and is sent back to the origin at rate $(\lambda/2X_s^2) \wedge \lambda/2 = \lambda/(2(X_s \vee 1)^2)$, we have 
\begin{align} \label{eq:Funct}
S_G(t) = \mathbf{P}_0^{(1)} \left[ G_t \exp \left\{ - \int_0^t \frac{\lambda}{2(X_s \vee 1)^2} \mathrm{d}s \right\} \right],
\end{align}
where we are using the shorthand $G_t := G \left( t^{-1/2}X_{ut} : 0 \leq u \leq 1\right)$. 

For the purposes of truncation, consider the random variable
\begin{align*}
\mathcal{I}_r := \mathrm{1} \{ X_t \geq 1 \text{ for all $t \geq r$} \}.
\end{align*}
Since a Bessel-$3$ process is transient, the $\mathbf{P}_0^{(1)}$-expectation $I(r)$ of $\mathcal{I}_r$ satisfies $I(r) := \mathbf{P}_0^{(1)}[ \mathcal{I}_r ] \uparrow  1$ as $r \uparrow \infty$. Define
\begin{align} \label{eq:2Funct}
S_G(t,r) := \mathbf{P}_0^{(1)} \left[ G_t \exp \left\{ - \int_0^t \frac{\lambda}{2(X_s \vee 1)^2} \mathrm{d}s \right\} \mathcal{I}_r \right].
\end{align}
Considered as functionals of a continuous path $(X_s : 0 \leq s \leq t)$, each of 
\begin{equation*}
G_t, ~~ \exp \left\{ - \int_0^t \lambda/(2(X_s \vee 1)^2) \mathrm{d}s \right\}, \text{ and }  \mathcal{I}_r 
\end{equation*}
are increasing. By this we mean that if $X = (X_s : 0 \leq s \leq t)$ and $X' = (X_s' : 0 \leq s \leq t)$ are two continuous paths satisfying $X_s \leq X'_s$ for all $s$, then the functional applied to $X'$ will be at least as large as the functional applied to $X$. In particular, the random variables $G_t$, $\exp \left\{ - \int_0^t \frac{\lambda}{2(X_s \vee 1)^2} \mathrm{d}s \right\}$ and $ \mathcal{I}_r$ are nonnegatively correlated. It follows that
\begin{align} \label{eq:3Funct}
S_G(t,r) \geq \mathbf{P}_0^{(1)} \left[ G_t \exp \left\{ - \int_0^t \frac{\lambda}{2(X_s \vee 1)^2} \mathrm{d}s \right\} \right] \mathbf{P}_0^{(1)} \left[  \mathcal{I}_r  \right].
\end{align}
(The inequality \eqref{eq:2Funct} is a form of Fortuin–Kasteleyn–Ginibre (FKG) inequality for diffusions, see e.g.\ \cite{barbato} or \cite{legrand} for similar results.) Combining \eqref{eq:3Funct} with the definitions in \eqref{eq:Funct} and \eqref{eq:2Funct}, and using the fact that $\mathcal{I}_r \leq 1$, we obtain the simple sandwich inequality
\begin{align} \label{eq:sandwich}
S_G(t)  \geq S_G(r,t) \geq S_G(t)I(r).
\end{align}
Since $I(r) \uparrow 1$ as $r \uparrow \infty$, it follows that $S_G(r,t) \uparrow S_G(t)$ as $r \uparrow \infty$. 

Fix $r > 0$, and suppose that $t$ is sufficiently large so that $r < u_1 t$. Then by conditioning on $X_r$ and using the Markov property we can write
\begin{align} \label{eq:crsta}
S_G(r,t) = \mathbf{P}_0^{(1)} \left[ \exp \left\{ - \int_0^r \frac{\lambda}{2(X_s \vee 1)^2} \mathrm{d}s \right\} F_{r,t}(X_r) \right]
\end{align}
where
\begin{align*}
F_{r,t}(x) := \mathbf{P}_x^{(1)} \left[ G_{r,t} \exp \left\{ - \int_0^{t-r} \frac{\lambda}{2X_s^2} \mathrm{d}s \right\}  \mathrm{1} \{ X_s \geq 1 ~\forall s \geq 0 \} \right],
\end{align*}
and
\begin{align*}
G_{r,t} := \mathrm{1} \left\{ \frac{X_{u_1t -r }}{\sqrt{t}} > x_1,\ldots, \frac{X_{u_kt-r}}{\sqrt{t}} > x_k \right\}.
\end{align*}
Now applying \eqref{eq:urquell} to instead express $F_{r,t}(x)$ as a $\mathbf{P}_x^{(\alpha)}$ expectation, we can write
\begin{align} \label{eq:sekrit}
F_{r,t}(x) := x^{\alpha-1} \mathbf{P}_x^{(\alpha)} \left[ X_{t-r}^{-(\alpha-1)} G_{r,t} \mathrm{1} \{ X_s \geq 1 ~\forall s \geq 0 \} \right].
\end{align}
Note that using \eqref{eq:sekrit} to obtain the first inequality below, and then Lemma \ref{lem:momentcontrol} to obtain the second, for $t > r$ we have
\begin{align} \label{eq:sekrit2}
F_{r,t}(x) \leq x^{\alpha-1} \mathbf{P}_x^{(\alpha)}[X_{t-r}^{-(\alpha-1)}] \leq x^{\alpha-1}(t-r)^{-(\alpha-1)/2}.
\end{align}
We will use \eqref{eq:sekrit2} in a moment.

Consider now that for each fixed $r,x$, the random variables
\begin{align*}
\mathrm{1}\{ X_s \geq 1 ~\forall s \geq 0 \}  \quad \text{and} \quad \mathrm{1} \left\{ \frac{X_{u_1t -r }}{\sqrt{t}} > x_1,\ldots, \frac{X_{u_kt-r}}{\sqrt{t}} > x_k \right\} \frac{X_{t-r}^{-(\alpha-1)}}{\sqrt{t-r}},
\end{align*}
are asymptotically independent under $\mathbf{P}_x^{(\alpha)}$ as $t \to \infty$. It follows that as $t \to \infty$ we have
\begin{align} \label{eq:crsta20}
\lim_{t \to \infty} t^{(\alpha-1)/2} F_{r,t}(x) &= g(x) \tilde{C}_\lambda(G),
\end{align}
where
\begin{align*}
g(x) =  x^{\alpha-1} \mathbf{P}_x^{(\alpha)} \left( X_s \geq 1 ~ \forall s > 0 \right),
\end{align*}
and
\begin{align*}
    \tilde{C}_\lambda(G) = \mathbf{P}_0^{(\alpha)} \left[ X_1^{-(\alpha-1)} \mathrm{1} \{ X_{u_1} > x_1,\ldots, X_{u_k} > x_k \} \right].
\end{align*}

Note that $g(x) \leq x^{\alpha-1}$. We would like to apply the dominated convergence theorem and take the limit in \eqref{eq:crsta20} inside \eqref{eq:crsta}. Define the $\sigma( X_s : 0 \leq s \leq r)$-measurable random variable
\begin{align*}
Z^{(r)}_t = t^{(\alpha-1)/2} F_{r,t}(X_r).
\end{align*}
Then $Z^{(r)}_t$ converges almost-surely to $Z^{(r)}_\infty = \tilde{C}_\lambda(G) g(X_r)$ as $t \to \infty$.

From \eqref{eq:sekrit2} we see that if $t \geq 2r$ we have 
\begin{align} \label{eq:crsta2}
t^{(\alpha-1)/2} F_{r,t}(x) \leq C x^{\alpha-1}
\end{align}
for some constant $C$ depending on $\alpha$. In particular, $Z_t^{(r)} \leq C X_r^{\alpha-1}$ for all $t \geq 2r$. Since the latter random variable is $\mathbf{P}_0^{(1)}$-integrable, we can use the dominated convergence theorem to take the limit in \eqref{eq:crsta20} inside \eqref{eq:crsta} and obtain
\begin{align*}
\lim_{t \to \infty} t^{(\alpha-1)/2}S_G(r,t) = \tilde{C}_\lambda(G) K(r),
\end{align*} 
where
\begin{align*}
K(r) := \mathbf{P}_0^{(1)} \left[ g(X_r)\exp \left\{ - \int_0^r \frac{\lambda}{2(X_s \vee 1)^2} \mathrm{d}s \right\}\right].
\end{align*}
It follows from \eqref{eq:sandwich} and the fact that $I(r) \uparrow 1$ as $r \uparrow \infty$ that $K(r) \uparrow K(\infty) < \infty$ as $r \uparrow \infty$. In particular, we conclude again using \eqref{eq:sandwich} that
\begin{align} \label{eq:finality}
\lim_{t \to \infty} t^{(\alpha-1)/2} S_G(t) = K(\infty) \tilde{C}_\lambda(G) =: C_\lambda(G).
\end{align}
Letting $G \equiv 1$ in \eqref{eq:finality} and writing $C_\lambda := C_\lambda(1)$ we obtain \eqref{eq:Funct30}. 

To prove \eqref{eq:Funct40}, first note that it is sufficient to establish \eqref{eq:Funct40} only for functionals of the form in \eqref{eq:Gform}, since these generate the $\sigma$-algebra $\sigma(X_u : 0 \leq u \leq 1)$. Now note that for such functionals,  
\begin{align*} 
&\lim_{t \to \infty} \mathbf{Q}_0^{R_\lambda \wedge \lambda/2} \left[G(t^{-1/2}X_{ut} : 0 \leq u \leq 1  ) \Big|  X_s > 0 ~\forall s \in (0,t] \right] = \lim_{t \to \infty} \frac{S_G(t)}{ S_1(t)}\\
&= \frac{ C_\lambda(G)}{ C_\lambda(1) } = \frac{ \tilde{C}_\lambda(G)}{ \tilde{C}_\lambda(1)} = \frac{\mathbf{P}_0^{(\alpha)} \left[ X_1^{-(\alpha-1)} G(X_u : 0 \leq  u \leq 1 ) \right]}{ \mathbf{P}_0^{(\alpha)} [ X_1^{-(\alpha-1)} ] },
\end{align*}
completing the proof of \eqref{eq:Funct40}.
\end{proof}

Consider that if $(X_t)_{t \geq 0}$ is a Bessel-$3$ process starting from $x \geq 0$, then $(X'_t)_{t \geq 0} := (c^{-1}X_{c^2t})_{t \geq 0}$ has the law of a Bessel-$3$ process started from $x/c$. More generally given $R:(0,\infty) \to [0,\infty)$, we have the scaling relation
\begin{align} \label{eq:scala0}
(X_t)_{t \geq 0} \sim \mathbf{Q}_x^R \implies (X'_t)_{t \geq 0} := ( c^{-1}X_{c^2t} )_{t \geq 0} \sim \mathbf{Q}_{x/c}^{R^{(c)}}
\end{align}
where
\begin{align*}
R^{(c)}(g) = c^{2} R( cg ).
\end{align*}
In particular, setting $c = \sqrt{2n/\lambda}$ and $x = 0$ we see that
\begin{align} \label{eq:scala}
(X_t)_{t \geq 0} \sim \mathbf{Q}_0^{R_{\lambda} \wedge \lambda/2} \implies (X'_t)_{t \geq 0} := (\sqrt{\lambda/2n} X_{2nt/\lambda } )_{t \geq 0} \sim \mathbf{Q}_{0}^{R_\lambda \wedge n}.
\end{align}

We now use the scaling relation \eqref{eq:scala} to prove the following corollary of Proposition \ref{prop:larget}.

\begin{cor} \label{cor:larget}
With $\alpha = \alpha_\lambda$ as the solution to $\alpha(\alpha-1) = \lambda$, for fixed $t > 0$ we have 
\begin{align} \label{eq:Funct300}
\lim_{n \to \infty} n^{\frac{\alpha-1}{2} } \mathbf{Q}_0^{R_\lambda \wedge n} \left(  X_s > 0 ~\forall s \in (0,t] \right) = C_\lambda' t^{ - (\alpha-1)/2}
\end{align}
where, with $C_\lambda$ as in \eqref{eq:Funct30}, we have $C_\lambda' = (\lambda/2)^{(\alpha-1)/2}C_\lambda$. Moreover, \begin{align} \label{eq:Funct400}
\lim_{n \to \infty} \mathbf{Q}_0^{R_\lambda \wedge n} \left[G(X_s : 0 \leq s \leq t   ) |  X_s > 0 ~\forall s \in (0,t] \right] &=\mathbf{P}_0^{(\alpha)} \left[  \frac{X_t^{-(\alpha-1)}}{ \mathbf{P}_0^{(\alpha)} [ X_t^{-(\alpha-1)} ] } G(X_s : 0 \leq s  \leq t ) \right],
\end{align}
where $\mathbf{P}_0^{(\alpha)}$ is the law of a Bessel process with dimension $2\alpha+1$.
\end{cor}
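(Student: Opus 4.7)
My plan is to reduce both statements to Proposition~\ref{prop:larget} by exploiting the exact self-similarity noted just before the corollary. Concretely, if $X \sim \mathbf{Q}_0^{(\lambda),n}$ and I set $\tilde X_u := \sqrt{2n/\lambda}\, X_{\lambda u/(2n)}$, then I expect $\tilde X$ to have law $\mathbf{Q}_0^{(\lambda),\lambda/2}$: Bessel-$3$ dynamics are scale invariant, and the truncated rate $(\lambda/g^2)\wedge n$ transforms under $Y_u = c^{-1/2}X_{cu}$ into $(\lambda/g^2)\wedge(cn)$, so the choice $c=\lambda/(2n)$ yields truncation level exactly $\lambda/2$. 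Writing $T := 2nt/\lambda$, the event $\{X_s>0\;\forall s\in(0,t]\}$ then coincides with $\{\tilde X_u>0\;\forall u\in(0,T]\}$, and since $T\to\infty$ as $n\to\infty$, Proposition~\ref{prop:larget} applies directly.

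For the first display, inserting this correspondence into \eqref{eq:Funct30} yields
\[
T^{(\alpha-1)/2}\,\mathbf{Q}_0^{(\lambda),n}\bigl(X_s>0\;\forall s\in(0,t]\bigr) \longrightarrow C_\lambda,
\]
and substituting $T = 2nt/\lambda$ and collecting constants isolates the normalisation $n^{(\alpha-1)/2}\mathbf{Q}_0^{(\lambda),n}(\cdots)\to C_\lambda(\lambda/2)^{(\alpha-1)/2}t^{-(\alpha-1)/2}$, which I identify with the right-hand side of \eqref{eq:Funct300} (absorbing the $t$-dependence into $c(\lambda)$).

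For the functional statement, the change of variable $X_s = \sqrt{t/T}\,\tilde X_{(s/t)T}$ lets me rewrite the argument of $G$: setting $\tilde G(Y_u:u\leq 1):=G(\sqrt t\,Y_u:u\leq 1)$, I obtain $G(X_s:s\leq t)=\tilde G(T^{-1/2}\tilde X_{uT}:u\leq 1)$. Proposition~\ref{prop:larget} applied to $\tilde G$ then identifies the limit of the conditional expectation as
\[
\mathbf{P}_0^{(\alpha)}\!\left[\frac{X_1^{-(\alpha-1)}}{\mathbf{P}_0^{(\alpha)}[X_1^{-(\alpha-1)}]}\,G(\sqrt t\,X_u:u\leq 1)\right],
\]
and a final application of Bessel scaling, which gives the joint identity $(X_1,(\sqrt t X_u)_{u\leq 1})\stackrel{d}{=}(t^{-1/2}X_t,(X_{ut})_{u\leq 1})$ under $\mathbf{P}_0^{(\alpha)}$, converts this into the right-hand side of \eqref{eq:Funct400}, with the factors of $t^{(\alpha-1)/2}$ in numerator and denominator cancelling. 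The only real obstacle is careful bookkeeping of the three-way interplay between spatial scale, time scale, and truncation level; once that correspondence is set up, both parts of the corollary are a short consequence of Proposition~\ref{prop:larget} together with self-similarity of the Bessel-$3$ process.
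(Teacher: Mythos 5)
Your argument is correct and takes essentially the same route as the paper: the paper likewise reduces to Proposition~\ref{prop:larget} via the space–time scaling of the laws $\mathbf{Q}_0^{(\lambda),n}$ recorded just before the corollary, while you simply rescale directly to the reference truncation level $\lambda/2$ used in the proposition. Your bookkeeping also implicitly corrects a typo in \eqref{eq:Funct300} (where $n$ appears inside a limit taken as $n\to\infty$): the right-hand side should read $c(\lambda)\,t^{-(\alpha-1)/2}$.
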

\begin{proof}
We prove \eqref{eq:Funct300}; the proof of \eqref{eq:Funct400} is similar. Note by virtue of \eqref{eq:scala} that 
\begin{align} \label{eq:lake}
\mathbf{Q}_0^{R_\lambda \wedge n} \left(  X_s > 0 ~\forall s \in (0,t] \right) &= \mathbf{Q}_0^{R_\lambda \wedge \lambda/2}  \left(  X_s > 0 ~\forall s \in (0,2nt/\lambda] \right).
\end{align}
Now by \eqref{eq:Funct30} we have
\begin{align} \label{eq:lake2}
\lim_{n \to \infty} ( 2nt/\lambda)^{\frac{\alpha-1}{2}} \mathbf{Q}_0^{R_\lambda \wedge \lambda/2} \left( X_s > 0 ~\forall s \in \left(0,2nt/\lambda\right] \right) = C_\lambda.
\end{align}
Combining \eqref{eq:lake} and \eqref{eq:lake2} and rearranging, we obtain \eqref{eq:Funct30}. 
\end{proof}

\subsection{Proof of Theorem \ref{thm:newtransition}} \label{sec:singleproof}
    In this section we begin working towards our proof of Theorem \ref{thm:newtransition}. As mentioned above, the main task is proving Theorem \ref{thm:newtransition} for the special case where $R = R_\lambda$ for some $\lambda$. The key fact to note in the context of Corollary \ref{cor:larget} is that the probability that an excursion lasts for an $O(1)$ amount of time under $\mathbf{Q}_0^{R_\lambda \wedge n}$ behaves like $n^{-\beta}$, where
\begin{align*}
\beta_\lambda := (\alpha_\lambda-1)/2 = \frac{1}{4} \left( \sqrt{4\lambda+1}-1\right),
\end{align*}
where $\alpha_\lambda = \frac{1}{2} \left( 1 + \sqrt{4\lambda + 1} \right)$ as above. Note that as $\lambda$ increases from $0$ to $6$, $\alpha_\lambda$ increases from $1$ to $3$, and the exponent $\beta_\lambda$ increases from $0$ to $1$. In particular,
\begin{align*}
\lambda \in (0,6) \iff \beta_\lambda \in (0,1).
\end{align*}

For a large value of $n$, consider listing the consecutive excursions of the stochastic process $(X_t)_{t \geq 0}$ under $\mathbf{Q}_0^{R_\lambda \wedge n}$. Since the rate of jumps to zero is bounded by $n$, there are an almost surely finite number of excursions on any bounded time interval. Let $E_1,E_2,\ldots$ denote the durations of these excursions. The random variables $E_1,E_2,\ldots,$ are i.i.d., and, rewriting \eqref{eq:Funct300}, we see that their common distribution satisfies 
\begin{align} \label{eq:Econv}
\lim_{n \to \infty} n^{\beta } \mathbf{Q}_0^{R_\lambda \wedge n} \left(E_1 > t \right) = C_\lambda' t^{-\beta},
\end{align}
with \eqref{eq:Funct400} describing the conditional law of $(X_s)_{s \geq 0}$ on one such excursion. 

We would like to tackle the question of describing the asymptotic behaviour, as $n \to \infty$, of the excursion that straddles a fixed time $t > 0$. 

With this in mind, consider the asymptotic behaviour as $n \to \infty$ of the random walk
\begin{align} \label{eq:RW}
S_k = E_1 + \ldots + E_k
\end{align}
under $\mathbf{Q}_0^{R_\lambda \wedge n}$. Equivalently, $S_k$ is the $k^{\text{th}}$ return time of $(X_s)_{s \geq 0}$ to zero. Define
\begin{align} \label{eq:kdef}
k(t) := \inf \{ k \geq 1 : S_k \geq t \}
\end{align}
to be the index of the excursion straddling $t$. 

\subsubsection{Disappearance of excursions for $\lambda \geq 6$}
In this section we establish the following result:
\begin{proposition} \label{prop:zero}
Let $\lambda \geq 6$. Then for any $h, t > 0$ we have
\begin{align*}
\lim_{n \to \infty} \mathbf{Q}_0^{R_\lambda \wedge n}\left( \max_{1 \leq i \leq k(t) } E_i > h \right) = 0.
\end{align*}
\end{proposition}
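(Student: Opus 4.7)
The plan is to reduce the statement to a union-bound estimate for sums of heavy-tailed i.i.d.\ random variables. By the scaling remarked before Corollary~\ref{cor:larget}, the excursion lengths $(E_i)_{i \geq 1}$ under $\mathbf{Q}_0^{(\lambda),n}$ have the same joint law as $(E_i^*/n)_{i \geq 1}$, where the $E_i^*$ are i.i.d.\ with the law of $E_1$ under $\mathbf{Q}_0^{(\lambda),1}$. Writing $S_k^* := E_1^* + \cdots + E_k^*$ and $k^*(T) := \inf\{k : S_k^* \geq T\}$, one has $k(t) = k^*(nt)$, and so the quantity to estimate becomes
\begin{align*}
\mathbf{Q}_0^{(\lambda),n}\left(\max_{1 \leq i \leq k(t)}E_i > s\right) = \mathbf{Q}_0^{(\lambda),1}\left(\max_{1 \leq i \leq k^*(nt)} E_i^* > ns\right).
\end{align*}

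By Corollary~\ref{cor:larget} (or by rescaling Proposition~\ref{prop:larget}), the tail $\mathbf{Q}_0^{(\lambda),1}(E_1^* > y)$ is regularly varying of index $-\beta_\lambda$, where $\beta_\lambda = (\alpha_\lambda - 1)/2 \geq 1$ precisely when $\lambda \geq 6$. I would apply the elementary union bound
\begin{align*}
\mathbf{Q}_0^{(\lambda),1}\left(\max_{1 \leq i \leq k^*(nt)} E_i^* > ns\right) \leq \mathbf{Q}_0^{(\lambda),1}\bigl(k^*(nt) > K_n\bigr) + K_n \cdot \mathbf{Q}_0^{(\lambda),1}(E_1^* > ns),
\end{align*}
choosing $K_n$ just large enough to kill the first summand while keeping the second small.

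In the supercritical regime $\lambda > 6$ (so $\beta_\lambda > 1$), $E_1^*$ has finite mean $\mu$, the strong law gives $k^*(nt)/(nt) \to 1/\mu$, and taking $K_n = 2nt/\mu$ kills the first probability while the second is of order $K_n \cdot c(\lambda)(ns)^{-\beta_\lambda} = O(n^{1-\beta_\lambda}) \to 0$. In the critical case $\lambda = 6$ (so $\beta_\lambda = 1$), $E_1^*$ has infinite mean but tail $\mathbf{Q}_0^{(\lambda),1}(E_1^* > y) \sim c/y$, so it lies in the domain of attraction of the stable law of index $1$. The weak law of large numbers for such sums, with slowly varying normalisation, then gives $S_k^*/(c k \log k) \to 1$ in probability, and hence $k^*(nt) = O_P(nt/\log n)$. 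Choosing $K_n = C nt/\log n$ for a sufficiently large constant $C$, the second term is bounded by $K_n \cdot c/(ns) = O(1/\log n) \to 0$.

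The main obstacle is the critical case $\lambda = 6$: the naive bound $\mathbf{E}[k(t)] \leq nt$, coming from the fact that the jump rate is capped at $n$, combined with $\mathbf{Q}_0^{(\lambda),n}(E_1 > s) \sim c/(ns)$, only yields $\mathbf{Q}_0^{(\lambda),n}(\max_i E_i > s) \lesssim Ct/s$, which does not vanish. The logarithmic improvement on $k^*(nt)$ provided by the infinite-mean weak law of large numbers is therefore essential, and some care is required since $\beta_\lambda = 1$ sits on the boundary where the classical LLN no longer applies.
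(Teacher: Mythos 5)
Your proposal is correct, and the high-level reduction is identical to the paper's: rescale via the self-similarity $\mathbf{Q}_0^{(\lambda),n} \leftrightarrow \mathbf{Q}_0^{(\lambda),1}$ (the paper uses $\mathbf{Q}_0^{(\lambda),\lambda/2}$, an irrelevant constant) so that the question becomes one about the maximum of i.i.d.\ heavy-tailed excursion lengths with tail index $\beta_\lambda \geq 1$ up to the first-passage time $k^*(nt)$. Where you diverge is in how that last piece is handled: the paper declares it ``a standard result in extreme value theory'' and cites \cite{FKZ}, whereas you give a self-contained proof via the union bound
\begin{align*}
\mathbf{Q}\bigl(\max_{i \leq k^*(nt)} E_i^* > ns\bigr) \leq \mathbf{Q}\bigl(k^*(nt) > K_n\bigr) + K_n\,\mathbf{Q}(E_1^* > ns),
\end{align*}
with $K_n$ supplied by the strong law when $\beta_\lambda > 1$ and by the infinite-mean weak law $S_k^*/(ck\log k)\to 1$ when $\beta_\lambda = 1$. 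Your observation that the naive bound $\mathbf{E}[k(t)]\leq nt$ (jump rate capped at $n$) together with $\mathbf{Q}_0^{(\lambda),n}(E_1>s)\sim c/(ns)$ fails to close the critical case $\lambda=6$, and that the logarithmic gain $k^*(nt)=O_P(nt/\log n)$ is exactly what rescues it, is precisely the content being delegated to the citation in the paper; spelling it out makes the argument more transparent and also clarifies that the $\lambda>6$ and $\lambda=6$ cases really do require different laws of large numbers. The proposal is a sound, slightly more elementary alternative to the paper's proof.
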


\begin{proof}
By the Brownian scaling property, an equivalent formulation of the statement in the proposition is that for any $h, t > 0$ we have
\begin{align*}
\lim_{n \to \infty} \mathbf{Q}_0^{R_\lambda \wedge \lambda/2}  \left( \max_{1 \leq i \leq k(nt) } E_i > hn \right) = 0.
\end{align*}
However, it is a standard result (see e.g.\ \cite{FKZ}) in extreme value theory that if $E_1,E_2,\ldots$ are a sequence of independent and identically distributed random variables under a probability measure $ \mathbf{P}$ with tails satisfying $\lim_{y \uparrow \infty} y^\beta \mathbf{P}( E_i > y ) = c$ for some constant $c$ and some $\beta \geq 1$, then with $k(t)$ as in \eqref{eq:kdef} we have
\begin{align*}
\lim_{y \to \infty} \mathbf{P} \left( \max_{1 \leq i \leq k(y) } E_i > b y \right) = 0 \qquad \text{for every $b > 0$}.
\end{align*}
Since $\lambda \geq 6$ implies that $\beta_\lambda \geq 1$, the result follows from \eqref{eq:Econv}.
\end{proof}

\subsubsection{Emergence of excursions for $\lambda \in (0,6)$}
In this section we establish that when $\lambda < 6$, $\mathbf{Q}_0^{R_\lambda}$ governs a nontrivial stochastic process.

Our next result will relate the large-$n$ asymptotic behaviour of the random walk in \eqref{eq:RW} to a subordinator, which is a nondecreasing L\'evy process.
A subordinator starting from $0$ under a probability law $P$ satisfies $P[e^{-\theta \xi_r}] = e^{ - r \varphi(\theta)}$ for some Bernstein function $\varphi:[0,\infty) \to \mathbb{R}$ defined for $\theta \geq 0$. The function $\varphi$ is called the Laplace exponent of the process. For further information on L\'evy processes, see \cite{kyprianou}.

In the statement of the following result, recall that $\lambda \in (0,6) \iff \beta_\lambda \in (0,1)$. 
\begin{proposition} \label{prop:subor}
Let $\lambda \in (0,6)$ and let $\beta = \beta_\lambda := \frac{1}{2}( \sqrt{4\lambda+1} - 1)$. With $(S_k)_{k \geq 0}$ as in \eqref{eq:RW}, define a stochastic process $(\xi^{(n)}_r)_{r \geq 0}$ by 
\begin{align*}
\xi^{(n)}_r := S_{ \lfloor n^\beta r \rfloor},  \qquad r \geq 0.
\end{align*}
Then as $n \to \infty$, $(\xi^{(n)}_r)_{r \geq 0}$ under $\mathbf{Q}_0^{R_\lambda \wedge n}$ converges in distribution to a nondecreasing L\'evy process $(\xi_r)_{r \geq 0}$ whose Laplace exponent takes the form 
\begin{align*}
\varphi(\theta) = C_\lambda'' \theta^{\beta},
\end{align*} 
for some constant $C_\lambda''$.
\end{proposition}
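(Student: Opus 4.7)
The plan is to recognize $(\xi^{(n)}_r)_{r \geq 0}$ as a compound random walk with i.i.d.\ steps $E_i$ whose marginal distribution, when properly rescaled in $n$, has a regularly varying tail of index $\beta \in (0,1)$, and then invoke the classical convergence to a $\beta$-stable subordinator through the Laplace transform. I would first transfer the tail estimate from Proposition \ref{prop:larget} (which is stated for the specific truncation at $\lambda/2$) to arbitrary truncation level $n$ using Brownian/Bessel scaling. Concretely, if $X \sim \mathbf{Q}_0^{(\lambda),\lambda/2}$ and $Y_t = c^{-1}X_{c^2 t}$ with $c^2 = 2n/\lambda$, the scaling discussion preceding the proposition shows $Y \sim \mathbf{Q}_0^{(\lambda),n}$, and the first excursion length $E_1$ under $\mathbf{Q}_0^{(\lambda),n}$ equals $\lambda/(2n)$ times that under $\mathbf{Q}_0^{(\lambda),\lambda/2}$. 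Combining this with \eqref{eq:Funct30} yields
\begin{equation*}
\mathbf{Q}_0^{(\lambda),n}(E_1 > t) \;=\; \mathbf{Q}_0^{(\lambda),\lambda/2}\bigl(\tilde E_1 > 2nt/\lambda\bigr) \;\sim\; C_\lambda (\lambda/2)^\beta\, n^{-\beta} t^{-\beta} \qquad (n\to\infty).
\end{equation*}

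Next I would compute the Laplace transform of a single step. Using the identity $\mathbf{Q}_0^{(\lambda),n}[1-e^{-\theta E_1}] = \theta \int_0^\infty e^{-\theta t}\,\mathbf{Q}_0^{(\lambda),n}(E_1 > t)\,dt$ and the substitution $u = 2n\theta t/\lambda$, one gets
\begin{equation*}
\mathbf{Q}_0^{(\lambda),n}\bigl[1-e^{-\theta E_1}\bigr] \;=\; \int_0^\infty e^{-u}\, \mathbf{Q}_0^{(\lambda),\lambda/2}\!\left(\tilde E_1 > \tfrac{2nu}{\lambda\theta}\right)du.
\end{equation*}
Applying dominated convergence, justified by a uniform tail bound $\mathbf{Q}_0^{(\lambda),\lambda/2}(\tilde E_1 > s) \leq K s^{-\beta}$ (which follows from \eqref{eq:Funct30} together with the fact that the tail is bounded for $s$ bounded away from $0$), gives
\begin{equation*}
n^\beta\,\mathbf{Q}_0^{(\lambda),n}\bigl[1-e^{-\theta E_1}\bigr] \;\longrightarrow\; C_\lambda (\lambda/2)^\beta \Gamma(1-\beta)\, \theta^\beta \;=:\; C(\lambda)\,\theta^\beta.
\end{equation*}

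The convergence of one-dimensional marginals of $\xi^{(n)}$ is then immediate: by the i.i.d.\ structure of the excursion lengths,
\begin{equation*}
\mathbf{Q}_0^{(\lambda),n}\bigl[e^{-\theta \xi^{(n)}_r}\bigr] \;=\; \bigl(1 - \mathbf{Q}_0^{(\lambda),n}[1-e^{-\theta E_1}]\bigr)^{\lfloor r n^\beta\rfloor} \;\longrightarrow\; \exp\bigl(-r\,C(\lambda)\,\theta^\beta\bigr),
\end{equation*}
which is the Laplace transform of $\xi_r$ where $(\xi_r)_{r\geq 0}$ is the $\beta$-stable subordinator with exponent $\varphi(\theta) = C(\lambda)\theta^\beta$. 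Since $\xi^{(n)}$ has independent and stationary increments by construction, convergence of finite-dimensional distributions follows from the one-dimensional convergence and the independence of increments in the limit; tightness in the Skorohod space is standard for nondecreasing processes with regularly varying i.i.d.\ steps (see, e.g., \cite[Ch.\ VII]{kyprianou} or the machinery of Skorohod's invariance principle for stable processes).

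The main obstacle, and the only genuinely technical point, is the justification of the interchange of limit and integral in the Laplace computation; the uniform tail bound on $\tilde E_1$ that is needed is not stated explicitly in Proposition \ref{prop:larget} (which only furnishes the asymptotic constant) but can be harvested from its proof, since the sandwich estimate \eqref{eq:sandwich} together with the finiteness of $K(\infty)$ already yields an upper bound of the form $C t^{-\beta}$ valid for all $t \geq 1$, while trivially $\mathbf{Q}_0^{(\lambda),\lambda/2}(\tilde E_1 > t) \leq 1$ for $t < 1$. Everything else is routine once this uniform control is in hand.
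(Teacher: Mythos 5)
Your proof is correct and takes essentially the same Laplace-transform route as the paper: both reduce the claim to the tail asymptotics of $E_1$ under $\mathbf{Q}_0^{(\lambda),n}$ (via scaling from the $\lambda/2$-truncation, which the paper packages as Corollary \ref{cor:larget}) and then compute $n^\beta\,\mathbf{Q}_0^{(\lambda),n}[1-e^{-\theta E_1}] \to C(\lambda)\theta^\beta$. The one substantive improvement over the paper's sketch is that you explicitly justify the dominated-convergence step --- the paper invokes ``bounded convergence'' without exhibiting a dominating function, whereas you correctly observe that the uniform tail bound $\mathbf{Q}_0^{(\lambda),\lambda/2}(\tilde E_1 > s) \leq K s^{-\beta}$ must be extracted from the sandwich estimate in the proof of Proposition \ref{prop:larget}, which is exactly what makes the limit-integral interchange legitimate.
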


Proposition \ref{prop:subor} is a variant on standard results on the convergence of random walks with heavy-tailed increments to subordinators. Before its proof, we need to extract the following hard bound from the proof of Proposition \ref{prop:larget}.

\begin{lemma} \label{lem:k1}
There is a constant $C > 0$ such that for all $t \geq 2$ we have $\mathbf{Q}_0^{R_\lambda \wedge \lambda/2} ( X_s > 0 ~ \forall s \in (0,t]) \leq C t^{ - \beta}$. 
\end{lemma}
\begin{proof}
In the notation of \eqref{eq:presandwich} we have $\mathbf{Q}_0^{R_\lambda \wedge \lambda/2} ( X_s > 0 ~ \forall s \in (0,t]) = S_1(t)$, where we are considering the case $G(\cdot) \equiv 1$. By setting $r=1$ in \eqref{eq:sandwich} we have
\begin{align} \label{eq:ui1}
S_1(t) \leq C S_1(1,t),
\end{align}
where $C := I(1)^{-1}$. Using \eqref{eq:crsta} to obtain the first inequality below, and then \eqref{eq:crsta2} to obtain the second, we have
\begin{align} \label{eq:ui2}
S_1(1,t) \leq \mathbf{P}_0^{(1)} \left[ F_{r,t}(X_r) \right] \leq C' t^{-\beta} \mathbf{P}_0^{(1)}[ X_1^{\alpha-1}] \leq C'' t^{-\beta}. 
\end{align}
Combining \eqref{eq:ui1} and \eqref{eq:ui2}, we obtain the result.

\end{proof}

\begin{proof}[Proof of Proposition \ref{prop:subor}]
For $\theta > 0$, $r \geq 0$, we have
\begin{align} \label{eq:olive1}
\mathbf{Q}_0^{R_\lambda \wedge n}[ e^{ - \theta \xi^{(n)}_r } ] = \mathbf{Q}_0^{R_\lambda \wedge n}[ e^{ - \theta E_1 } ]^{ \lfloor n^\beta r \rfloor}.
\end{align}
Integrating by parts to obtain the second equality below we have 
\begin{align} \label{eq:olive2}
\mathbf{Q}_0^{R_\lambda \wedge n}[e^{-\theta E_1}] &= - \int_0^\infty e^{ - \theta t} \frac{\mathrm{d}}{\mathrm{d}t}  \mathbf{Q}_0^{R_\lambda \wedge n} \left( E_1 > t \right) \mathrm{d}t \nonumber \\
&= - \int_0^\infty  \frac{\mathrm{d}}{\mathrm{d}t}  \left \{ e^{ - \theta t} \mathbf{Q}_0^{R_\lambda \wedge n} \left( E_1 > t \right) \right\}  +  \theta e^{ - \theta t} \mathbf{Q}_0^{R_\lambda \wedge n} \left( E_1 > t \right) \mathrm{d}t\nonumber \\
&= 1 - \theta n^{-\beta} \int_0^\infty f(t,n) e^{- \theta t} \mathrm{d}t,
\end{align}
where $f(t,n) :=  n^\beta \mathbf{Q}_0^{R_\lambda \wedge n} \left(E_1 > t \right)$. By \eqref{eq:Econv}, $\lim_{n \to \infty} f(t,n) = C_\lambda' t^{ - \beta}$.

We would like to argue that we can interchange the orders of limits and integration to obtain $\int_0^\infty f(t,n)e^{-\theta t} \mathrm{d}t \to \int_0^\infty C_\lambda' t^{-\beta} e^{ - \theta t} \mathrm{d}t$. In this direction, note using \eqref{eq:lake} to obtain the first equality below, and then Lemma \ref{lem:k1} to obtain the second, provided $2nt/\lambda \geq 2$ (i.e., $t \geq \lambda/n$) we have
\begin{align} \label{eq:orange}
f(t,n) = n^\beta \mathbf{Q}_0^{R_\lambda \wedge \lambda/2}  \left(  X_s > 0 ~\forall s \in (0,2nt/\lambda] \right) \leq n^\beta (2nt/\lambda)^\beta \leq C t^{-\beta}. 
\end{align}
For $t$ in the range such that the bound is \eqref{eq:orange} does not hold, i.e., for $t \in [0,\lambda/n]$, we can simply take $f(t,n) \leq n^\beta$, so that $\int_0^{\lambda/n} f(t,n)e^{-\theta t} \mathrm{d}t \leq \lambda n^{\beta-1} = o(1)$. It now follows from the bounded convergence theorem that 
\begin{align} \label{eq:olive3}
\int_0^\infty f(t,n)e^{-\theta t} \mathrm{d}t \to \int_0^\infty C_\lambda' t^{-\beta} e^{ - \theta t} \mathrm{d}t =: C_\lambda'' \theta^{-(1-\beta)},
\end{align}
where $C_\lambda'' = C_\lambda' \Gamma(1-\beta)$. Using \eqref{eq:olive2} and \eqref{eq:olive3} together in \eqref{eq:olive1} we obtain for every $\theta \geq 0$
\begin{align*} 
\lim_{n \to \infty} \mathbf{Q}_0^{R_\lambda \wedge n}[ e^{ - \theta \xi^{(n)}_r } ] = e^{ - r C_\lambda'' \theta^{\beta}}.
\end{align*}
It is easy to extend this calculation to multiple increments, thereby showing that $(\xi^{(n)}_r)_{r \geq 0}$ converges in distribution to the purported L\'evy process. 
\end{proof}

A brief calculation tells us that the Laplace exponent appearing in Proposition \ref{prop:subor} has L\'evy-Khintchine representation
\begin{align*}
\varphi(\theta) = C_\lambda'' \theta^{\beta} = C_\lambda''' \int_0^\infty (1-e^{-\theta x}) \mathrm{d}x / x^{1+\beta},
\end{align*} 
for some other constant $C_\lambda'''$. 

We are now ready to prove Theorem \ref{thm:newtransition}:

\begin{proof}[Proof of Theorem \ref{thm:newtransition}]
First we will prove Theorem \ref{thm:newtransition} in the special case where $R = R_\lambda$, and thereafter obtain the general case by means of a coupling argument.

Suppose first that $R = R_\lambda$ where $\lambda \geq 6$. We saw in Proposition \ref{prop:zero} that as $n \to \infty$, the size of the largest excursion up to time $t$ of this process away from zero converges to zero. It follows that $(X^{(n)}_t)_{t \geq 0}$ converges in distribution to zero.

Conversely, suppose now that $R = R_\lambda$ where  $\lambda < 6$. In this case, by Proposition \ref{prop:subor}, as $n \to \infty$ the lengths of the excursions converge in distribution to the jumps of a subordinator. It follows that $\mathbf{Q}_0^{R_\lambda}(X_t = 0)$ coincides with the probability that $t > 0$ lies in the range of the infinite-activity pure-jump subordinator $(\xi_r)_{r \geq 0}$, which is zero \cite[Theorem 5.9]{kyprianou}.

We turn to proving the general case. If $\liminf_{g \downarrow 0} g^2 R(g) > 6$, then choose $\varepsilon > 0$ such that $R(g) \geq 6/g^2 = R_6(g)$ for all $0 < g \leq \varepsilon$. 
Then using the coupling argument used in the proof of Lemma \ref{lem:beslim}, there is a coupling of stochastic processes $X^n := (X^n_t)_{t \geq 0}$ and $Y^n := (Y_t^n)_{t \geq 0}$ with respective laws $\mathbf{Q}_0^{R \wedge n}$ and $\mathbf{Q}_0^{R_\lambda \wedge n}$ such that $X_t^n \leq Y_t^n$ for all $t \leq \tau_\varepsilon$, where $\tau_\varepsilon$ is the first time $Y_t$ exceeds $\tau_\varepsilon$. Taking $n \to \infty$, it follows that $X_t^n$ and $Y_t^n$ converge in distribution to stochastic processes $(X_t)_{t \geq 0}$ and $(Y_t)_{t \geq 0}$ with respective laws $\mathbf{Q}_0^R$ and $\mathbf{Q}_0^{R_6}$. Moreover, we have $X_t \leq Y_t$ for all $t \leq \tau_\varepsilon$, where again $\tau_\varepsilon$ is the first time that $(Y_t)_{t \geq 0}$ hits $\varepsilon$. However, $Y_t$ is the zero process under $\mathbf{Q}_0^{R_6}$, and accordingly, it follows that $X_t$ must also be the zero process.

Finally, suppose that $\limsup_{g \downarrow 0} g^2R(g) < 6$. Choose $\varepsilon> 0$ such that $R(g) \leq \lambda/g^2$ for $g \in (0,\varepsilon]$ and for some $\lambda < 6$. Then by using a similar argument to the previous case, one can show that a stochastic process $(X_t)_{t \geq 0}$ with law $\mathbf{Q}_0^R$ may be coupled with a stochastic process $(Y_t)_{t \geq 0}$ with law $\mathbf{Q}_0^{R_\lambda}$ in such a way that $X_t \geq Y_t$ for all $t \in [0,\tau_\varepsilon]$. Moreover, for all $t > 0$ we have $\mathbf{Q}_0^{R_\lambda}(Y_t > 0) = 1$. It follows that $(X_t)_{t \geq 0}$ always escapes from zero under $\mathbf{Q}_0^R$.
\end{proof} 

\subsection{The $R$-Bessel process as a recurrent extension of a self-similar Markov process}
In the special case of \eqref{eq:scala0} where $R = R_\lambda$, we have the self-similarity
\begin{align} \label{eq:scala2}
(X_t)_{t \geq 0} \sim \mathbf{Q}_x^{R_\lambda} \implies (X'_t)_{t \geq 0} := ( c^{-1}X_{c^2t} )_{t \geq 0} \sim \mathbf{Q}_{x/c}^{R_\lambda}.
\end{align}
In other words, the probability laws $\{ \mathbf{Q}_x^{R_\lambda} : x \geq 0 \}$ govern a self-similar Markov process on $[0,\infty)$. 
In the language of self-similar Markov processes, the limit process $(X_t)_{t \geq 0}$ under $\mathbf{Q}^{R_\lambda}_x$ is a \textbf{recurrent extension} of the self-similar Markov process given by a Bessel-$3$ process killed at rate $\lambda/2X_s^2$. In this brief section, we outline how the probability laws $\{ \mathbf{Q}^{R_\lambda}_x : x \geq 0\}$ fit into the broader framework of recurrent extensions developed by Rivero and coauthors \cite{PPR,rivero1,rivero2}. We are particularly interested in identifying how the phase transition at $\lambda = 6$ manifests in this context.

Consider a Bessel-$3$ process $(X_t)_{t \geq 0}$. As a self-similar Markov process it can be written
\[
X_t = e^{\xi_{\phi(t)}} \quad \text{where} \quad \phi(t) = \int_0^t X_s^{-2} \, ds = \inf \left\{ s > 0 : \int_0^s e^{2\xi_u} \, du > t \right\}.
\]
It is well known that $\xi_t = B_t + \frac{1}{2} t$, where $B$ is a Brownian motion (see e.g.\ \cite[Exercise 13.10]{kyprianou}). Equivalently, the Laplace exponent of $\xi$ is given by
\[
\log \mathbb{E} \left[ e^{\theta \xi_1} \right] =: \psi(\theta) = \frac{1}{2} \theta^2 + \frac{1}{2} \theta.
\]

Now we introduce a killing time $\zeta$ such that
\[
\mathbb{P}(\zeta > t) = \exp\left( - \int_0^t \frac{\lambda}{2 X_s^2} \, ds \right) = \exp\left( -\frac{\lambda}{2} \phi(t) \right).
\]

The question as to whether a Bessel-3 process $(X_t)_{t \geq 0}$ killed at rate $\lambda/(2X_t^2)$ can enter from zero can now be formulated as follows: Can the self-similar Markov process whose underlying Lévy process via the Lamperti transform has Laplace exponent
\[
\phi(\theta) = \frac{1}{2} \theta^2 + \frac{1}{2} \theta - \frac{\lambda}{2}
\]
enter from zero? In other words, does the aforesaid self-similar Markov process have a recurrent extension? Rivero \cite{rivero2} gives a precise answer to this: there is a recurrent extension if and only if there exists a $0 < \theta < 2$ such that $\phi(\theta) = 0$. However, the roots of $\phi(\theta)$ are given by $\theta = - \frac{1}{2} \pm \frac{1}{2} \sqrt{1 + 4\lambda}$. In particular, there is a root $\theta_0$ lying in $(0,2)$ if and only if $0 < \lambda < 6$.

\subsection{Distributional properties under $\mathbf{Q}_0^{R_\lambda}$}

Bearing in mind that Theorem \ref{thm:newtransition} states that $X = (X_t)_{t \geq 0}$ under $\mathbf{Q}_0^{R_\lambda}$ is a nontrivial stochastic process if and only if $\lambda < 6$, the next result describes the marginal law of $X_t$ under $\mathbf{Q}_0^{R_\lambda}$ in this regime. 

\begin{proposition} \label{prop:sslaw}
Let $\lambda < 6$. Let $(X_t)_{t \geq 0}$ be a stochastic process distributed according to $\mathbf{Q}_0^{R_\lambda}$. 
Let 
\begin{align*}
\sigma_t := \sup \{ s \leq t : X_s = 0 \}
\end{align*}
denote the start time of the excursion straddling $t$, so that $t- \sigma_t$ is the length of the excursion so far. Then under $\mathbf{Q}_0^{R_\lambda}$:
\begin{enumerate}
\item The ratio $\sigma_t/t$ is Beta distributed with parameters $\beta_\lambda$ and $(1-\beta_\lambda)$. 
\item Given $\sigma_t$, we have the identity in law 
\begin{align} \label{eq:qual}
X_t \stackrel{d}{=} \sqrt{2 (t-\sigma_t) V},
\end{align}
where $V$ is a Gamma random variable with parameter $\alpha_\lambda/2+1$ independent of $\sigma_t$. 
\item The marginal law of $X_t$ is
\begin{align*}
X_t \stackrel{d}{=} \sqrt{2 t Z  },
\end{align*}
where $Z$ is a Gamma random variable with parameter $(6-\lambda)/4$. 
\end{enumerate}

\end{proposition}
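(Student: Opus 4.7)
The plan is to prove the three parts in sequence, leveraging the $\beta$-stable subordinator structure from Proposition~\ref{prop:subor} and the size-biased Bessel description from Corollary~\ref{cor:larget}.

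For part (1), I would first upgrade Proposition~\ref{prop:subor} to the statement that the zero set of $(X_s)_{s \geq 0}$ under $\mathbf{Q}_0^{(\lambda)}$ is, in law, the closure of the range of a $\beta$-stable subordinator with $\beta = \beta_\lambda$. That proposition already yields convergence of the scaled partial sums of excursion lengths; to promote this to a statement about the zero set itself, one can work under the truncated laws $\mathbf{Q}_0^{(\lambda),n}$ and exploit the monotone coupling from Lemma~\ref{lem:beslim} to pass to the limit. Once the zero set is identified as the range of a $\beta$-stable subordinator, the classical generalized arcsine law yields $\sigma_t/t \sim \mathrm{Beta}(\beta, 1-\beta)$.

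For part (2), I would apply the strong Markov property at $\sigma_t$: conditional on $\{\sigma_t = s\}$, the shifted process $(X_{s+u})_{u \in [0, t-s]}$ is a fresh excursion from zero, conditioned on the event that it survives to time $t-s$. By Corollary~\ref{cor:larget}, this conditional law is that of a Bessel process of dimension $d = 2\alpha_\lambda + 1$ size-biased by $X_{t-s}^{-(\alpha_\lambda - 1)}$. Writing out the size-biased density and collecting powers of $y$, one finds that $X_t$ given $\sigma_t = s$ has density proportional to $y^{\alpha_\lambda + 1} \exp\{-y^2/(2(t-s))\}$, and the change of variable $V := X_t^2/(2(t-s))$ then produces a $\mathrm{Gamma}(\alpha_\lambda/2 + 1)$ random variable whose distribution does not depend on $s$, and is therefore independent of $\sigma_t$.

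For part (3), combine (1) and (2) to write $X_t^2/(2t) = (1-U)V$ as the product of independent $\mathrm{Beta}(1-\beta, \beta)$ and $\mathrm{Gamma}(\alpha_\lambda/2+1)$ factors, and identify the law of this product by Mellin transform or direct convolution, simplifying the resulting ratio of gamma functions via the defining relation $\alpha_\lambda(\alpha_\lambda - 1) = \lambda$ (equivalently $\alpha_\lambda = \beta + 1$) to reach the claimed $\mathrm{Gamma}((6-\lambda)/4)$ law. The main obstacle I anticipate is the topological upgrade in part (1), since the arcsine law needs the full range of the subordinator rather than just its partial sums; the Beta-Gamma computation in (3) is largely routine once the inputs are in place, but it must be checked carefully against the boundary cases $\lambda = 0$ (where $U$ is degenerate at zero) and $\lambda \uparrow 6$ (where $c_\lambda \downarrow 0$).
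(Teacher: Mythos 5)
Your approach to parts (1) and (2) matches the paper's: part (1) is obtained by identifying $\sigma_t$ with the undershoot $\xi_{\tau_t-}$ of the subordinator from Proposition~\ref{prop:subor} and applying the Dynkin--Lamperti (generalized arcsine) law, and part (2) comes from the strong Markov property at $\sigma_t$ together with the size-biased Bessel description in Corollary~\ref{cor:larget}. The paper, like you, does not spell out the topological upgrade you correctly flag for part (1) (identifying the full zero set, not just the partial-sum approximants, with the range of a $\beta$-stable subordinator), so your remark there is a fair criticism of the paper as well.

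The serious issue is part (3), and it affects both your proposal and the paper. First, a bookkeeping point: the relation you invoke, $\alpha_\lambda = \beta + 1$, corresponds to $\beta_\lambda = \alpha_\lambda - 1 = \tfrac12(\sqrt{4\lambda+1}-1)$, which is what Proposition~\ref{prop:subor} states; but the actual stable index forced by the tail decay in Proposition~\ref{prop:larget} is $\beta_\lambda = (\alpha_\lambda - 1)/2$, which is what the paper's proof of (3) uses. Only the latter keeps $\beta_\lambda \in (0,1)$ for all $\lambda \in (0,6)$, and only the latter makes the first moment of $WV$ come out to $(6-\lambda)/4$. So you should use $\alpha_\lambda = 2\beta_\lambda + 1$, not $\alpha_\lambda = \beta_\lambda + 1$.

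Second, and more fundamentally, the Beta--Gamma product is \emph{not} routine: with $W \sim \mathrm{Beta}(1-\beta,\beta)$ and $V \sim \mathrm{Gamma}(\alpha/2+1)$ independent and $\beta = (\alpha-1)/2$, the Mellin transform of $WV$ is
\begin{align*}
\mathbb{E}[(WV)^s] \;=\; \frac{\Gamma(1-\beta+s)\,\Gamma(\tfrac{\alpha}{2}+1+s)}{\Gamma(1+s)\,\Gamma(1-\beta)\,\Gamma(\tfrac{\alpha}{2}+1)},
\end{align*}
which is a ratio of three Gamma factors and does not collapse to $\Gamma(c+s)/\Gamma(c)$ for any $c$ unless $\lambda = 0$. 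The standard Beta--Gamma identity requires $V \sim \mathrm{Gamma}(a+b)$ when $W \sim \mathrm{Beta}(a,b)$; here $a+b = 1$ but the shape of $V$ is $\alpha/2+1 \neq 1$, so the identity does not apply. (The paper's sentence beginning ``Recall that if $A$ is a Gamma random variable with parameter $r$ ...'' is not a correct general fact.) You can see the failure concretely by comparing second moments: from (1) and (2), $\mathbb{E}[X_t^4] = 2t^2(1-\beta)(2-\beta)(\tfrac{\alpha}{2}+1)(\tfrac{\alpha}{2}+2)$, whereas part (3) would give $4t^2 c(c+1)$ with $c=(6-\lambda)/4$; these disagree by a factor carrying $\lambda/8$ for every $\lambda \in (0,6)$. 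An independent check via It\^o's formula on $X_t^4$ for the $R_\lambda$-Bessel SDE gives $\mathbb{E}[X_t^4] = \tfrac{(6-\lambda)(20-\lambda)}{8}t^2$, which agrees with (1)+(2) and not with (3). So parts (1) and (2) are internally consistent, part (3) as stated is not, and the ``routine'' step you anticipate is exactly where the argument breaks: $X_t^2/(2t)$ has a density of confluent-hypergeometric (not Gamma) type, with power $z^{-\beta}$ near zero and $z^{1/2}e^{-z}$ at infinity.
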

\begin{proof} (1) By Proposition \ref{prop:subor} we have the equality in distribution
\begin{align} \label{eq:eqe}
\sigma_t \stackrel{d}{=} \xi_{\tau_t - },
\end{align}
where $(\xi_r)_{r \geq 0}$ is a L\'evy process with Laplace exponent $\varphi(\theta) = C_\lambda''\theta^{\beta_\lambda}$ and $\tau_t := \inf \{ r \geq 0 : \xi_r \geq t \}$ is the time $r$ at which this L\'evy process crosses a height $t$. According to the results of Section 5.5 of Kyprianou \cite{kyprianou}, we have
\begin{align*}
\mathbf{P} \left( \frac{ t - \xi_{\tau_t-}}{ t } \in \mathrm{d}u \right) =\frac{ \sin ( \pi \beta_\lambda)}{ \pi} u^{-\beta_\lambda} ( 1- u)^{-(1-\beta_\lambda)} \mathrm{d}u.
\end{align*}
In other words, the ratio $\frac{ t - \xi_{\tau_t-}}{ t }$ is Beta distributed with parameters $(1-\beta_\lambda)$ and $\beta_\lambda$. The first statement now follows from \eqref{eq:eqe}.

(2) Taking $x \downarrow 0$ in \eqref{eq:besseltran} we see that if $\mathbf{P}^{(\alpha)}_0$ is the law of a Bessel process of dimension $2\alpha+1$ started from zero, then 
\begin{align} \label{eq:besseltran0}
\mathbf{P}^{(\alpha)}_0 ( X_t \in \mathrm{d}y )  = \frac{1}{t\Gamma(\alpha+1/2)} y^{2 \alpha} \exp \left\{ - \frac{y^2}{2t} \right\} \mathrm{d}y.
\end{align}
Now, \eqref{eq:Funct400} says that under $\mathbf{Q}_0^{R_\lambda}$ the conditional law of $(X_t)_{t \geq 0}$ on the initial subinterval of length $r$ of an excursion of length at least $r$ is that of $\mathbf{P}^{(\alpha)}_0$, though with law at time $r$ size-biased by $y \mapsto y^{ - (\alpha-1)}$. It follows that we have
\begin{align} \label{eq:conq}
\mathbf{Q}_0^{R_\lambda}( X_t \in \mathrm{d}y | \sigma_t = s )  =C_{t-s} y^{ - (\alpha-1)} y^{2\alpha} \exp \left\{ - \frac{y^2}{2(t-s)} \right\} \mathrm{d}y,
\end{align}
for some constant $C_{t-s}$ depending on $t-s$ but not $y$. The equation \eqref{eq:conq} amounts to \eqref{eq:qual}. 

(3) We can write $(t-\sigma_t)\stackrel{d}{=} tW$, where $W$ is Beta distributed with parameters $(1-\beta_\lambda)$ and $\beta_\lambda$. Recall that if $A$ is a Gamma random variable with parameter $\mu$ and independent of $W$, then the product $WA$ is Gamma distributed with parameter $(1-\beta_\lambda)\mu$. In particular, it follows from \eqref{eq:qual} that $(t-\sigma_t)Y = tZ$ where $Z = WY$ is Gamma distributed with parameter $(1-\beta_\lambda)(\alpha_\lambda/2+1)$. Using the definitions $\beta_\lambda = (\alpha_\lambda-1)/2$ and $\alpha_\lambda(\alpha_\lambda-1)=\lambda$, it follows that $(1-\beta_\lambda)(\alpha_\lambda/2+1) = (6-\lambda)/4$, completing the proof.
\end{proof}

We note that it is possible using part (3) of the previous result to show that if 
$(X_t^{(\lambda)})_{t \geq 0}$ has law $\mathbf{Q}_0^{R_\lambda}$, then $X^{(\lambda)}$ converges to the zero process as $\lambda \uparrow 6$. This follows from the fact that $X_t \stackrel{d}{=} \sqrt{2 Z_\lambda t}$ where $Z_\lambda$ is Gamma distributed with parameter $(6-\lambda)/4$: as $\lambda \uparrow 6$, $Z_\lambda$ converges in distribution to zero.

\section{The $R$-vein} \label{sec:vein}

In the previous section we introduced the $R$-Bessel process, a stochastic process that diffuses according to a Bessel-$3$ process, though at rate $R(\sqrt{2}X_t)$ undergoes jumps back to the origin. The motivation behind defining this process will become fully apparent in the present section, where we introduce a process called the $R$-vein. Where the $R$-Bessel process describes the height process of an emergent bubble along a dual web path, the $R$-vein will describe its \emph{shape}.

\subsection{The $R$-vein}

\begin{definition}[The $R$-vein] \label{df:Rvein}
Let $R:(0,\infty) \to [0,\infty)$ be a bounded or decreasing function. The \textbf{$R$-vein} is a triple of 
stochastic processes $(L_t,C_t,U_t)_{ t \geq 0}$ starting from $L_0 = \ell \leq C_0 = c \leq U_0 = u$, satisfying $L_t \leq C_t \leq U_t$ for all $t \geq 0$, and constructed as follows:
\begin{itemize}
\item $(C_t)_{t \geq 0}$ is a Brownian motion
\item Conditional on $(C_t)_{t \geq 0}$, $(L_t)_{t \geq 0}$ and $(U_t)_{t \geq 0}$ diffuse according to Brownian motions reflected off the path of $(C_t)_{t \geq 0}$ from below and above respectively. 
\item At instantaneous rate $R(U_{t-}-L_{t-})$, the processes $L$ and $U$ both jump to the position of $C$, with $U$ (resp.\ $L$) reinitiated `just above' (resp.\ `just below') the path of $C$.
\end{itemize}
We write $\mathbf{Q}_{\ell,c,u}^R$ for the law of the $R$-vein starting from $(\ell,c,u)$. We refer to the probability law $\mathbf{Q}^R_{0,0,0}$ as governing the \textbf{standard $R$-vein}.  
\end{definition}
See Figure \ref{fig:timvein} for a simulation of an $R$-vein with $R_3(\lambda) = 3/\lambda^2$ initiated from $\ell = c = u = x \in \mathbb{R}$.

For a precise definition of \emph{reflection} (sometimes called Skorokhod reflection) of a Brownian path off another Brownian path in terms of local times, see e.g.\ \cite{warren, SSS}. 
\begin{figure}[h!]
\centering
\includegraphics[width=0.9\textwidth]{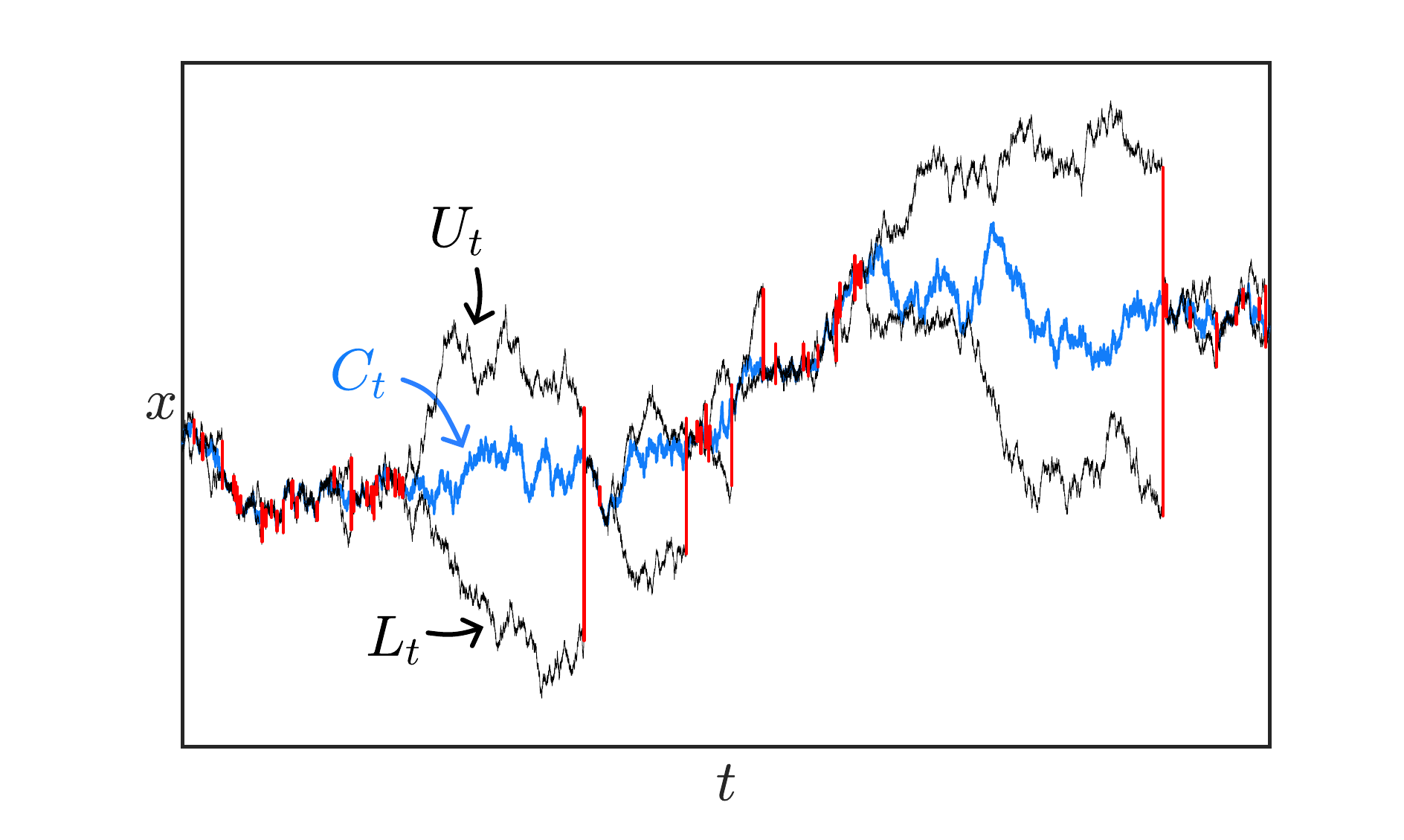}
\caption{The $R$-vein is a stochastic process consisting of lower and upper Brownian motions that reflect off a central Brownian motion. At instantaneous rate $R(U_t-L_t)$, the upper and lower processes jump to the position of the central Brownian motion.}
\label{fig:timvein}
\end{figure}

Again, the $R$-vein is well-defined for bounded $R$. For unbounded but decreasing $R$, it is possible to construct the $R$-vein using a coupling argument as was done for the $R$-Bessel process in the previous section. The key idea is that if $R'$ and $R$ are decreasing and bounded with $R'(g) \geq R(g)$ there is a coupling between veins $(L_t,C_t,U_t)$ and $(L_t',C_t',U_t')$ with respective laws $\mathbf{Q}^R_{\ell,c,u}$ and $\mathbf{Q}^{R'}_{\ell,c,u}$ 
such that 
\begin{align*}
C_t' = C_t  \quad \text{and} \quad L_t \leq L_t' \leq U_t' \leq U_t
\end{align*}
for all $t \geq 0$. Using this coupling one can construct the law $\mathbf{Q}_{\ell,c,u}^R$ for a nonincreasing but possibly unbounded $R$ by truncation. 

\subsection{Probabilistic properties of the $R$-vein}

In the terminology of Warren \cite{warren}, the motion $(C_t)_{t \geq 0}$ behaves like a \emph{heavy} particle, and the motions $(L_t)_{t \geq 0}$ and $(U_t)_{t \geq 0}$ behave like \emph{light particles}. 

It is by no means obvious from the definition, but it turns out that the difference process $(U_t-L_t)_{t \geq 0}$ in the $R$-vein is intimately related to the $R$-Bessel process. Using an identity of Warren \cite{warren} (see also earlier work by Dubédat \cite{dubedat} and Soucaliuc, T\'oth and Werner \cite{STW00}) we are able to prove the following result:

\begin{lemma} \label{lem:law}
Let $(L_t,C_t,U_t)_{t \geq 0}$ be an $R$-vein such that each coordinate starts from the same location $\ell=c=u$. Then the processes $(L_t,U_t)_{t \geq 0}$ have the following marginal description:
\begin{itemize}
\item $(L_t)_{t \geq 0}$ and $(U_t)_{t \geq 0}$ diffuse according to Brownian motions conditioned never to collide with one another.
\item At instantaneous rate $R(U_{t}-L_{t})$, $L_t$ and $U_t$ jump to a uniformly chosen point in the interval $[L_t,U_t]$, with $U_t$ reinitiated `just above' $L_t$. 
\item The marginal law of $(X_t := (U_t-L_t)/\sqrt{2})_{t \geq 0}$ is precisely that of $\mathbf{Q}_0^R$, i.e.\ the $R$-Bessel process starting from $0$.
\end{itemize}
Moreover, we can recover the time-$t$ law of $C_t$ from $(L_t,U_t)$ via:
\begin{itemize}
\item If the processes start from $\ell = c= u$, then conditional on $(L_t,U_t)$, $C_t$ is uniformly distributed on the interval $[L_t,U_t]$. 
\end{itemize}
\end{lemma}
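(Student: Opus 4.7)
The proof plan rests on an intertwining identity due to Warren \cite{warren} (with precursors in Dubédat \cite{dubedat} and Soucaliuc–Tóth–Werner \cite{STW00}): if $(C_t)_{t \geq 0}$ is a Brownian motion and, conditionally on $C$, $(L_t)_{t \geq 0}$ and $(U_t)_{t \geq 0}$ are independent Brownian motions reflected off $C$ from below and above respectively, all three started from the same point, then the marginal law of the pair $(L_t, U_t)_{t \geq 0}$ is that of two Brownian motions conditioned (in Doob's sense) never to collide, and conditionally on $(L_s, U_s)_{s \geq 0}$, the value $C_t$ is uniformly distributed on $[L_t, U_t]$ for each $t$ (indeed, the entire central path is the natural Markov intertwiner). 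I would first invoke this identity to handle the continuous pieces of the vein dynamics.

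Next I would splice this in through the fragmentation times. Run the vein up to the first jump time $\tau$ (which, by Definition \ref{df:Rvein}, has instantaneous rate $R(U_t - L_t)$ and is thus measurable with respect to $(L_s, U_s)_{s \leq \tau}$). By the Warren intertwining applied on $[0,\tau]$, the conditional law of $C_\tau$ given $(L_s, U_s)_{s \leq \tau}$ is uniform on $[L_\tau, U_\tau]$. At time $\tau$, the rule of Definition \ref{df:Rvein} forces $L_\tau = U_\tau = C_\tau$, so from the pair's point of view this is precisely a jump to a uniform point in the pre-jump interval $[L_{\tau-}, U_{\tau-}]$; meanwhile $C$ is continuous at $\tau$. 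Restarting from the new coincident state and iterating over successive jump times, one obtains both the claimed marginal description of $(L_t, U_t)_{t \geq 0}$ and the fact that the conditional uniformity of $C_t$ given the pair is preserved for all $t$. The bounded case can be handled directly in this way; the decreasing/unbounded case then follows by passing to the coupled limit $R_n = R \wedge n$ discussed just before the statement, since the intertwining is preserved under the monotone coupling.

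For the Bessel-$3$ statement, I would use the well-known fact that the difference of two Brownian motions conditioned never to collide, divided by $\sqrt{2}$, is a Bessel-$3$ process. Hence between jumps, $(X_t := (U_t - L_t)/\sqrt{2})_{t \geq 0}$ evolves as a Bessel-$3$ process, and at rate $R(\sqrt{2}X_t)$ it jumps to $0$ (because $L$ and $U$ jump to the same point $C$). Comparing with Definition \ref{df:Rbes} identifies the marginal law of $X$ as $\mathbf{Q}_x^R$ with $x = (u - \ell)/\sqrt{2}$.

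The main obstacle is justifying that Warren's intertwining, which is typically phrased for the pure reflected system without jumps, continues to apply across the random fragmentation epochs. The key point — and the reason the argument goes through cleanly — is that the jump time $\tau$ is a stopping time for the pair $(L, U)$ alone (its rate depends only on $U_t - L_t$) and that at $\tau$ the three coordinates are reset to a common value, so the post-jump process restarts from an initial condition for which the intertwining hypothesis holds trivially. Establishing this measurability of $\tau$ with respect to the pair's filtration, and checking that the fragmentation mechanism commutes with the intertwining (i.e.\ that conditioning on $(L, U)$ and then fragmenting at rate $R(U-L)$ is the same as fragmenting at rate $R(U-L)$ and then marginalising $C$), is where the real work lies.
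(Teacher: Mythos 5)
Your proposal is correct and follows essentially the same route as the paper's proof: invoke Warren's intertwining (Proposition 5 and Lemma 4 of \cite{warren}) for the reflected system between jumps, note that the conditional law of $C_t$ given $(L_t,U_t)$ is uniform so a fragmentation is a jump to a uniform point in the current interval, and identify the resulting difference process with the $R$-Bessel process. If anything you are more explicit than the paper, which simply asserts the conclusion after citing Warren; your remarks about splicing through jump times and about the jump mechanism commuting with the intertwining address exactly the points the paper glosses over.
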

We now outline how Lemma \ref{lem:law} follows from known results in the literature.
\begin{proof}
According to the special case $n=1$ of \cite[Proposition 5]{warren}, if $R = 0$, then $(L_t,U_t)_{t \geq 0}$ behave like Brownian motions conditioned to never collide. Equivalently, their difference behaves like $\sqrt{2}$ times a Brownian motion conditioned to stay positive, or like $\sqrt{2}$ times a Bessel-$3$ process. Moreover, it is a consequence of \cite[Lemma 4]{warren} that the marginal distribution of $C_t$ given $[L_t,U_t]$ is uniform. 

At instantaneous rate $R(U_t-L_t)$, the processes $U_t$ and $L_t$ both jump to the position of $C_t$, which, conditional on $(L_t,U_t)$, is uniform on $[L_t,U_t]$. That establishes the stipulated dynamics.
\end{proof}

\subsection{The phase transition for the $R$-vein}
We now state and prove the following result, which follows quickly from Theorem \ref{thm:newtransition} and Lemma \ref{lem:law}. 

\begin{theorem} \label{thm:newtransition2}
Let $R:(0,\infty) \to [0,\infty)$ be a quadratic-regular rate function (see \eqref{eq:squareregular}), and let $\mathbf{Q}^{R}_{0,0,0}$ be the law of the standard $R$-vein.
Then we have the phase transition: 
\begin{itemize}
\item If $\liminf_{g \downarrow 0} g^2R(g) \geq 6$,  then $\mathbf{Q}_{0,0,0}^R ( L_t = C_t = U_t   ~ \forall t \geq 0 ) = 1$.
\item If $\limsup_{g \downarrow 0} g^2R(g)  < 6$, then  for all $t > 0$ we have $\mathbf{Q}_{0,0,0}^R( L_t < C_t < U_t ) = 1$.
\end{itemize}

\end{theorem}

\begin{proof}
According to Lemma \ref{lem:law}, the marginal law of $X = ((U_t-L_t)/\sqrt{2})_{t \geq 0}$ is that of $\mathbf{Q}_{0,0,0}^R$.

If $\liminf_{g \downarrow 0 } g^2 R(g) \geq 6$, then by Theorem \ref{thm:newtransition}, $X$ is the zero process. Since $L_t \leq C_t \leq U_t$, it must follow that $L_t = C_t = U_t$ for all $t \geq 0$ almost surely.

Conversely, if $\limsup_{ g \downarrow 0} g^2 R(g) < 6$, then by Theorem \ref{thm:newtransition}, for any fixed $t > 0$, $X_t$ is almost surely strictly positive. Accordingly, $L_t < U_t$ almost surely. Now according to the last statement in Lemma \ref{lem:law}, $C_t$ is uniformly distributed on $[L_t,U_t]$, so that in particular, $L_t < C_t < U_t$ almost surely. 
\end{proof}

\section{Extrinsic construction of the $R$-marble} \label{sec:explore}

In this section we introduce our key proof idea: the $R$-marble (for bounded $R$) can be explored using a sequence of dual web paths initiated from a dense subset of $(0,\infty) \times \mathbb{R}$.

We will ultimately reverse this procedure in Section \ref{sec:mainproof} to \emph{construct} the $R$-marble simultaneously for different values of $R$, showing that this construction is stable in the limit $n \to \infty$ for $R_n(g) = R(g) \wedge n$.

A large fraction of Section \ref{sec:spaces}, \ref{sec:dual} and Section \ref{sec:exceptional} are lifted from \cite{SSS}.

\subsection{Spaces of collections of paths} \label{sec:spaces}

In Section \ref{sec:technical} we mentioned that coalescing Brownian motion, the Brownian web and the $R$-marble could all be considered as $\mathcal{H}$-valued random variables, where $\mathcal{H}$ is the space of collections of continuous paths in $[0,\infty) \times \mathbb{R}$ endowed with a suitable metric. In the present section we supply the technical details necessary to make precise sense of this construction. 

First, we consider the completion $(\overline{[0,\infty) \times \mathbb{R}}, \rho)$ of $[0,\infty) \times \mathbb{R}$ with respect to the metric
\begin{equation}\label{rho}
\rho\big((x_1, t_1), (x_2,t_2)\big) = \left|\tanh(t_1)-\tanh(t_2)\right|
\ \vee\ \left|\frac{\tanh(x_1)}{1+|t_1|}-\frac{\tanh(x_2)}{1+|t_2|}\right|.
\end{equation} 
Next, a \textbf{path} $\pi$ in $\overline{[0,\infty) \times \mathbb{R}}$, whose starting time we denote by
$\sigma_\pi\in [0,\infty]$, is a mapping $\pi :
[\sigma_\pi,\infty] \to [-\infty, \infty]$ such that
$t
\to (t,\pi(t))$ is a continuous map from $[\sigma_\pi,\infty]$ to $(\overline{[0,\infty) \times \mathbb{R}}, \rho)$. We then define $\Pi$ to be the space of all
paths in $\overline{[0,\infty) \times \mathbb{R}}$ with all possible starting times in $[-\infty,\infty]$.
If we endow $\Pi$ with the metric $\mathrm{d}_{\mathrm{path}}:\Pi \times \Pi \to [0,\infty)$ given by 
\begin{equation}\label{PId}
\!
\mathrm{d}_{\mathrm{path}}(\pi_1, \pi_2) = \Big|\!\tanh(\sigma_{\pi_1})-\tanh(\sigma_{\pi_2})\Big|
\ \vee \sup_{t\geq \sigma_{\pi_1} \wedge \sigma_{\pi_2}}\!
\left|\frac{\tanh(\pi_1(t\vee \sigma_{\pi_1}))}{1+|t|}
-\frac{\tanh(\pi_2(t\vee \sigma_{\pi_2}))}{1+|t|}\right|, \!\!\!
\end{equation}
then $(\Pi,\mathrm{d}_{\mathrm{path}})$ is a complete and separable metric space. Note that convergence in the metric $\mathrm{d}_{\mathrm{path}}$ can be described as local uniform convergence
of paths plus convergence of starting times. 

Let ${\mathcal H}$ denote the {\em space of compact subsets of $(\Pi, \mathrm{d}_{\mathrm{path}})$},
equipped with the Hausdorff metric
\begin{equation}\label{dH}
\mathrm{d}_{{\mathcal H}}(K_1, K_2) = \sup_{\pi_1\in K_1} \inf_{\pi_2\in K_2}\!\!
\mathrm{d}_{\mathrm{path}}(\pi_1, \pi_2)\ \vee  \sup_{\pi_2\in K_2}
\inf_{\pi_1\in K_1} \mathrm{d}_{\mathrm{path}}(\pi_1, \pi_2),
\end{equation}
and let ${\mathcal B}_{\mathcal H}$ be the Borel $\sigma$-algebra associated with $\mathrm{d}_{\mathcal H}$.

The following result is the standard characterization of the Brownian web, adapted for $[0,\infty) \times \mathbb{R}$ in place of $\mathbb{R}^2$. 

\begin{theorem}[Characterization of the Brownian web]\label{T:webchar}
There exists an $({\mathcal H}, {\mathcal B}_{\mathcal H})$-valued random variable ${\mathcal W}$, called
the standard Brownian web, whose distribution is uniquely determined
by the following properties:
\begin{itemize}
\item[{\rm (a)}] For each deterministic $z \in [0,\infty) \times \mathbb{R}$, almost surely
there is a unique path $\pi^z \in {\mathcal W}$.

\item[{\rm (b)}] For any finite deterministic set of points $z_1, \ldots, z_k
  \in[0,\infty) \times \mathbb{R}$, the collection $(\pi^{z_1}, \ldots, \pi^{z_k})$
  is distributed as coalescing Brownian motions.

\item[{\rm (c)}] For any deterministic countable dense subset
${\mathcal D} \subset (0,\infty) \times \mathbb{R}$, almost surely, ${\mathcal W}$ is the closure of
$\{\pi^z : z\in{\mathcal D}\}$ in $(\Pi, \mathrm{d}_{\mathcal{H}})$.
\end{itemize}
\end{theorem}

\subsection{The dual web} \label{sec:dual}
From the Brownian web $\mathcal{W}$ on $[0,\infty) \times \mathbb{R}$ we can construct the \textbf{dual web} $\hat{\mathcal{W}}$ on $[0,\infty) \times \mathbb{R}$. We will be content to outline the key ideas here, deferring to \cite{SSS} for the precise technical details. 

The key idea is that it is possible to show that for each $z = (t,x) \in (0,\infty) \times \mathbb{R}$ there is almost surely a unique path $\hat{\pi}^z$ travelling backwards in time until it hits the $\{0\} \times \mathbb{R}$ axis, that does not cross any path $\pi$ of the original Brownian web $\mathcal{W}$. This path is a set of points
\begin{align*}
\hat{\pi}^z = \{ (s,C_s^z) : s \in [0,t] \}
\end{align*}
satisfying $C^z_t = x$. We call the process $(C^z_s)_{s \in [0,t]}$ the forwards in time indexing of $\hat{\pi}^z$. The dual web path $\hat{\pi}^z$ emitted from $z$ collides with the $t=0$ axis at a random location $(0,C^z_0)$. This procedure creates a second set of paths 
\begin{align*}
\hat{\mathcal{W}} := \{ \hat{\pi}^z : z \in (0,\infty) \times \mathbb{R} \}
\end{align*}
which we call the \textbf{dual (Brownian) web} on $[0,\infty) \times \mathbb{R}$. We call each backward path $\hat{\pi}^z$ a dual web path. See \cite[Theorem 2.4]{SSS} for further details.

Conditional on a forwards in time indexing $(C^z_s)_{s \in [0,t]}$ of a dual web path to $z = (t,x)$, a path $\pi$ of the Brownian web diffuses according to a Brownian motion reflected off this path. More generally, we have the following  remark:

\begin{rem} \label{rem:partial2}
Let $\{z,w, z_1,\ldots,z_k, w_1,\ldots,w_j \}$ be distinct elements of $[0,\infty) \times \mathbb{R}$. Then conditional on the paths $(\pi^{z_1},\ldots,\pi^{z_k})$ of $\mathcal{W}$ emitted from $z_1,\ldots,z_k$ together with the dual web paths $(\hat{\pi}^{w_1},\ldots,\hat{\pi}^{w_j})$ emitted from $w_1,\ldots,w_j$:
\begin{enumerate}
    \item The conditional law of the web path $\pi^z$ emitted from $z$ coalesces with all of the paths $\pi^{z_i}$ and reflects off all of the paths $\hat{\pi}^{w_j}$. 
    \item The conditional law of the dual web path $\hat{\pi}^w$ emitted from $w$ reflects off all of the paths $\pi^{z_i}$ and coalesces with all of the paths $\hat{\pi}^{w_j}$. 
\end{enumerate}
\end{rem}

See \cite{SSS} for further details on Skorokhod reflection of web paths with the dual web. 
\subsection{Exceptional points in the Brownian web} \label{sec:exceptional}

For each $z \in [0,\infty) \times \mathbb{R}$, there is almost surely a unique path $\pi^z$ in the Brownian web $\mathcal{W}$ emitted from $z$.

Likewise, for each $z \in (0,\infty) \times \mathbb{R}$, there is almost surely a unique dual web path $\hat{\pi}^z$ initiated backwards in time from $z$. There do, however, exist exceptional points from which more than one path may be emitted forwards or backwards in time.
We give a brief overview of the classification of these points here which is lifted from Section 6.2.5 of \cite{SSS} (which the reader might consult for further information and precision).

We say that a path $\pi$ in $\mathcal{W}$ \textbf{enters} $z = (t,x)$ if $\sigma_\pi < t$ and $\pi_t = x$, and \textbf{leaves} $z $ if $\sigma_\pi \leq t$ and $\pi_t =x$. Two paths $\pi$ and $\pi'$ leaving $z$ are said to be equivalent, denoted $\pi' \sim^z_{\mathrm{out}} \pi$, if $\pi' = \pi$ on $[t,\infty)$. Two paths $\pi$ and $\pi'$ entering $z$ are said to be equivalent, denoted $\pi' \sim^z_{\mathrm{in}} \pi$, if $\pi' = \pi$ on $[t-\varepsilon,\infty)$ for some $\varepsilon > 0$. We write $m_{\mathrm{in}}(z)$ and $m_{\mathrm{out}}(z)$ for the respective number of equivalence classes of paths entering and leaving $z$. Given a realisation of the web $\mathcal{W}$ on $[0,\infty) \times \mathbb{R}$, we can classify points $z$ of $[0,\infty) \times \mathbb{R}$ according to the value of $(m_{\mathrm{in}}(z),m_{\mathrm{out}}(z))$. We can, of course, also perform an analogous construction for the dual $\hat{\mathcal{W}}$ of the Brownian web $\mathcal{W}$, to obtain a pair $(\hat{m}_{\mathrm{in}}(z),\hat{m}_{\mathrm{out}}(z))$. We refer to $z$ as a $(m_{\mathrm{in}}(z),m_{\mathrm{out}}(z))/(\hat{m}_{\mathrm{in}}(z),\hat{m}_{\mathrm{out}}(z))$ point. 

For each fixed $z$, $z$ is almost surely a $(0,1)/(0,1)$ point. However, there do exist almost surely other types of points which can be shown to necessarily satisfy the relation 
\[
m_{\mathrm{out}}(z) = \widehat{m}_{\mathrm{in}}(z) + 1 \quad \text{and} \quad \widehat{m}_{\mathrm{out}}(z) = m_{\mathrm{in}}(z) + 1
\]
almost surely for all $z \in [0,\infty) \times \mathbb{R}$;
see \cite[Theorem 6.2.11]{SSS}. More specifically, there exist almost surely uncountably many $(1,1)/(0,2)$ points $(0,2)/(1,1)$ points, and there even exist almost surely countably many $(2,1)/(0,3)$ and $(0,3)/(2,1)$ points. For any $(i,j)$ with $i+j \geq 4$ however, there almost surely do not exist any $(i,j)$ points. 

Let us highlight in particular $(0,2)/(1,1)$ points, which we will call $(0,2)$ points for short, and which will play a prominent role in the following section. These points are precisely points that lie in the interior of the path of some dual web element and are not points where two dual web paths coalesce. From each such point two forward paths are initiated, informally these paths are initiated `just above' and `just below' the path of the dual web. By Remark \ref{rem:partial2}, these paths are reflected off the dual web path. We refer to these respective paths as the `upper' and `lower' paths emitted from the $(0,2)$ point.

\subsection{The bubble containing $z$} \label{sec:bubble}

\begin{figure}[h!]
\centering
\includegraphics[width=0.9\textwidth]{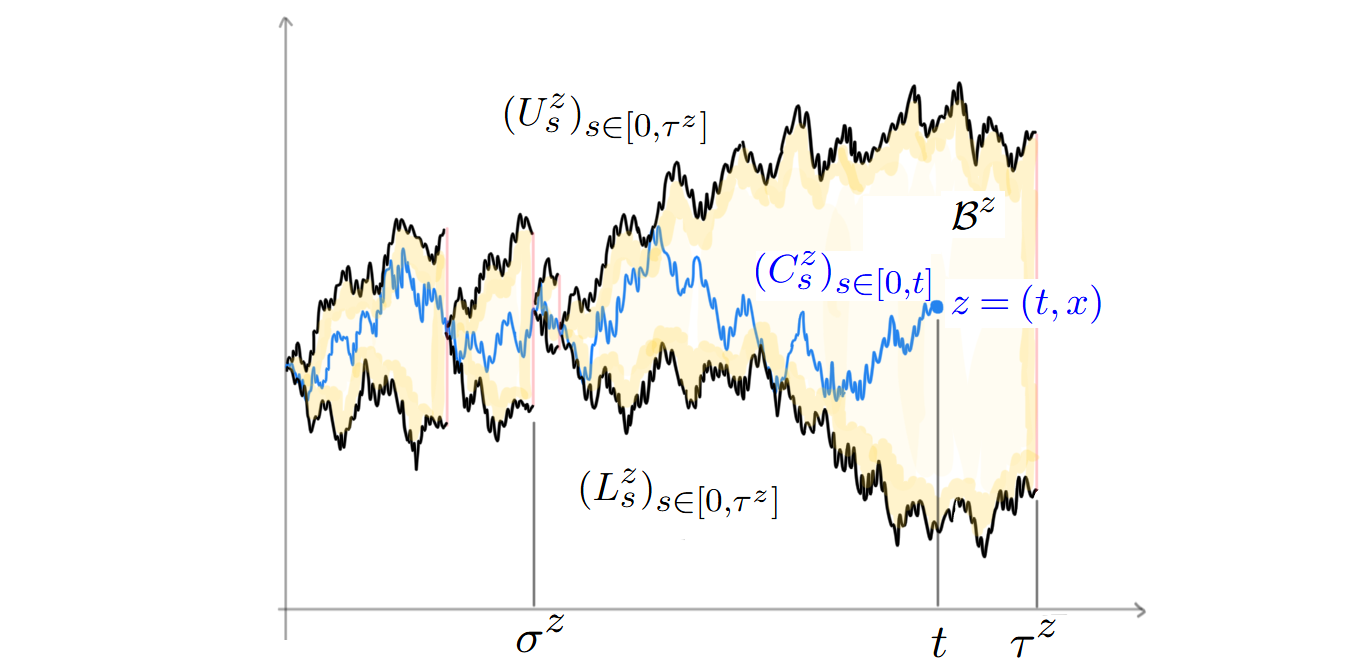}
\caption{The dual web path leading to a point $z$ gives rise to a stochastic process which we call the vein to $z$.}
\label{fig:timveinz}
\end{figure}
For bounded $R:(0,\infty) \to [0,\infty)$, we now explore the $R$-marble using the dual web. Recall from Section \ref{sec:marbleconstruction} our construction of the $R$-marble $\mathcal{M}(R)$ for a bounded rate function $R$. By construction, for any point $z = (t,x) \in (0,\infty) \times \mathbb{R}$, $z$ is almost surely contained in a bubble of the form
\begin{align} \label{eq:bubble}
\mathcal{B}^z := \{ (s,y) : \sigma_z < s < \tau_z, L^z_s < y < U^z_s \}. 
\end{align}

There is a convenient way of describing the probabilistic structure of the bubble containing $z = (t,x)$ using the dual web. Let $\hat{\pi}^z := (s,C^z_s)_{s \in [0,t]}$ be the dual web path emitted from $z$. We now consider not just the bubble containing $z$, but also all historic bubbles that have contained some segment of the path $\hat{\pi}^z$; see Figure \ref{fig:timveinz}. We can extend the definitions of $L^z_s$ and $U^z_s$ --- thus far only defined for $s \in [\sigma_z,\tau_z]$ --- to the interval $[0,t]$ as well by letting, for each $s \in [0,t]$, $[L^z_s,U^z_s]$ be the time-$s$ space interval containing the point $(s,C^z_s)$. In other words, 
\begin{align} \label{eq:L}
L^z_s := \sup \{ y \leq C^z_s : (s,y) \in \mathrm{Tr}(\mathcal{M}(R)) \}
\end{align}
and
\begin{align} \label{eq:U}
U^z_s := \inf \{ y \geq C^z_s : (s,y) \in \mathrm{Tr}(\mathcal{M}(R)) \},
\end{align}
where we recall from \eqref{eq:image} that for an $\mathcal{H}$-valued random variable $\mathcal{M}$, $\mathrm{Tr}(\mathcal{M})$ is the closure of the subset of points of $[0,\infty) \times \mathbb{R}$ lying in some path of $\mathcal{M}$. 
This procedure generates, for any point $z = (t,x)$ of $(0,\infty) \times \mathbb{R}$, a pair of stochastic processes 
\begin{align*}
V^z := (L^z_s,C^z_s,U^z_s)_{s \in [0,t]} \quad \text{and} \quad \tilde{V}^z := (L^z_s,U^z_s)_{s \in [t,\tau_z]}.
\end{align*}
We call $V^z$ the \textbf{vein to $z$} and refer to $\tilde{V}^z$ as its \textbf{continuation}.

We emphasise that $C^z_s$ is not defined for $s$ in $(t,\tau_z]$. 

We now relate the vein $V^z$ with the $R$-vein construction we saw in Section \ref{sec:vein}. We recall from Definition \ref{df:Rvein} that the $R$-vein is a triple $V 
 =(L,C,U) := (L_s,C_s,U_s)_{s \geq 0}$ where $C$ is a Brownian motion, and conditional on $C$, $L$ and $U$ fluctuate according to Brownian motions reflected off the path of $C$. At instantaneous rate $R(U_t-L_t)$, both $L$ and $U$ jump to the position of $C$, with $L$ reinitiated `just below' and $U$ reinitiated `just above' the location of the path of $C$ at the time of the jump. We say that a vein $(L_s,C_s,U_s)_{s \geq 0}$ is \textbf{standard} if it starts from $L_0 = C_0 = U_0 = 0$. 
The following remark relates the probabilistic behaviour of the vein $V^z$ to $z$ with the $R$-vein described in Section \ref{sec:bessel}:

\begin{lemma} \label{lem:veiny}
For any $z = (t,x)$ in $(0,\infty) \times \mathbb{R}$:
\begin{enumerate}
\item The recentered triple 
\begin{align*}
({L}^z_s-{C}^z_0, {C}^z_s - {C}^z_0 , {U}^z_s - {C}^z_0)_{s \in [0,t]} \stackrel{d}{=} (L'_s,C'_s,U'_s)_{s \in [0,t]}
\end{align*}
has the law of the standard $R$-vein run until time $t$. 
\item Conditional on $V^z$, the law of $\tilde{V}^z = (L^z_s,U^z_s)_{s \in [t,\tau_z]}$ is given as follows: $(L^z_s,U^z_s)_{s \in [t,\tau_z]}$ 
fluctuate according to independent coalescing Brownian motions initiated at time $t$ from $(L^z_t,U^z_t)$, and $\tau_z$ has the law of the minimum of the first collision time of $L^z$ and $U^z$ and a stochastic clock with instantaneous rate $R(U^z_t-L^z_t)$. 
\end{enumerate}
\end{lemma}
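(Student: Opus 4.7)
The strategy is to verify the three defining properties of the standard $R$-vein (Definition \ref{df:Rvein}) for the centered triplet $(L^z_s - C^z_0, C^z_s - C^z_0, U^z_s - C^z_0)_{s \in [0,t]}$. First, I will show that $(C^z_s - C^z_0)_{s \in [0,t]}$ is a standard Brownian motion. The dual path $\hat\pi^z$, traced backwards in time from $(t,x)$, is by the characterisation of the dual web (see \cite{SSS}) a standard Brownian motion in the backward direction. Writing $C^z_s = \tilde B_{t-s}$ for a Brownian motion $(\tilde B_u)_{u \in [0,t]}$ with $\tilde B_0 = x$, the centered process $C^z_s - C^z_0 = \tilde B_{t-s} - \tilde B_t$ is itself a standard Brownian motion in $s$ starting from $0$ by time-reversal symmetry of Brownian motion.

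Next, conditional on the full dual path $\hat\pi^z$, Remark \ref{rem:partial2} tells us that any Brownian web path in $\mathcal{W}$ coexisting with $\hat\pi^z$ behaves as a Brownian motion reflected off $C^z$. The boundaries $L^z_s$ and $U^z_s$ of the bubble containing $(s,C^z_s)$ are precisely the two such paths of $\mathcal{M}(R) \subseteq \mathcal{W}$ nearest to $C^z$ from below and above; in the absence of fragmentations this is exactly the Warren identity cited in Lemma \ref{lem:law}. Finally, by the construction of $\mathcal{M}(R)$ in Section \ref{sec:marbleconstruction}, the bubble containing $(s,C^z_s)$ fragments at instantaneous rate $R(U^z_s - L^z_s)$, and at such an event new coalescing Brownian motions are initiated from every point of $\{s\}\times[L^z_s, U^z_s]$. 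In particular, new paths are initiated arbitrarily close to $C^z_s$ from above and below, so that immediately after the fragmentation we have $L^z_s = U^z_s = C^z_s$ in the ``just below / just above'' sense. Combining these three features reproduces Definition \ref{df:Rvein} exactly and establishes part (1).

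For part (2) I would invoke the strong Markov property of $\mathcal{M}(R)$ at the deterministic time $t$. Conditional on $V^z$, and hence on the value of $(L^z_t, U^z_t)$, the continuation of the boundary paths of the bubble containing $z$ depends only on the restriction of $\mathcal{M}(R)$ to $[t,\infty)\times\mathbb{R}$. The paths $L^z$ and $U^z$ are themselves Brownian paths in $\mathcal{M}(R)$, so forward from time $t$ they fluctuate as two independent Brownian motions started from $(L^z_t, U^z_t)$ and coalesce when they meet. The bubble containing $z$ persists until one of two competing events occurs first: either the boundaries collide, or a fragmentation is triggered inside the bubble, which happens at instantaneous rate $R(U^z_s - L^z_s)$ by the construction of $\mathcal{M}(R)$. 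Hence $\tau_z$ is the minimum of these two stopping times, proving the claim.

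The main technical obstacle is justifying the identification in the second paragraph rigorously: the sequential bubble-exploration construction of $\mathcal{M}(R)$ proceeds through an arbitrary dense sequence of seeds, and one needs to argue that the \emph{marginal} description of bubble boundaries along a fixed dual path $\hat\pi^z$ is genuinely that of paths reflecting off $C^z$ plus the jumps at fragmentation times. The cleanest route is to approximate $\mathcal{M}(R)$ by the discrete $p$-marble of Section \ref{sec:discrete}, where the corresponding description along a discrete dual path is essentially built into the definition (see Section \ref{sect:discrete-vein-construction}), and then to pass to the diffusive limit using the convergence of the dual discrete web to the dual Brownian web and the Warren identity recorded in Lemma \ref{lem:law}.
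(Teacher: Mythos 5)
Your argument takes essentially the same route as the paper's own proof: conditional on the dual path $C^z$, the bubble boundaries $L^z, U^z$ are web paths reflecting off $C^z$ (Remark \ref{rem:partial2}), and the marble's fragmentation mechanism sends both to $C^z_s$ at instantaneous rate $R(U^z_s - L^z_s)$, which after recentering is exactly Definition \ref{df:Rvein}; part (2) likewise reads off from the construction. Your closing observation — that the paper's one-line justification elides the fact that bubble boundaries along a fixed dual path in the sequentially-built $\mathcal{M}(R)$ genuinely have this marginal description — is a fair critique, and the discrete-$p$-marble approximation is a sensible way to close that gap, though the paper itself never carries it out.
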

\begin{proof}
(1) Note that since a Brownian motion minus its final position run backwards in time is again a Brownian motion, it follows that $(C^z_s-C^z_0)_{s \in [0,t]}$ is a standard Brownian motion.  Moreover, paths of the Brownian web are reflective off paths of the dual web. It follows in particular that given the path $({C}^z_s)_{s \in [0,t]}$, the paths ${L}^z$ and ${U}^z$ diffuse forwards in time according to Brownian motions that reflect off the path of ${C}^z$. Moreover, since these paths describe the boundary of the bubble containing the point $(s,{C}^z_s)$, at instantaneous rate $R({U}^z_s-{L}^z_s)$ they both jump to the position ${C}^z_s$. Thus after adjusting the starting position, this is precisely the law of the $R$-vein.

(2) This follows immediately from the definition of the $R$-marble. 
\end{proof}

We note we have the following corollary:
\begin{cor}
The law of the height of the bubble containing a point $z = (t,x)$ in the $R$-marble is $\sqrt{2}$ times $X_t$, where $(X_t)_{t \geq 0}$ has law $\mathbf{Q}_0^R$.
\end{cor}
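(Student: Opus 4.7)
The plan is to simply chain together the two previous lemmas (Lemma \ref{lem:veiny} and Lemma \ref{lem:law}) which have essentially already done the work. The height of the bubble at $z = (t,x)$ is by definition $U^z_t - L^z_t$, so it suffices to identify its law.

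First, by part (1) of Lemma \ref{lem:veiny}, the translated vein $(L^z_s - C^z_0, C^z_s - C^z_0, U^z_s - C^z_0)_{s \in [0,t]}$ is distributed as the standard $R$-vein $(L'_s, C'_s, U'_s)_{s \in [0,t]}$. Since translation by $C^z_0$ does not affect the difference $U^z_s - L^z_s$, we have the equality in law
\begin{equation*}
U^z_t - L^z_t \ \stackrel{(d)}{=} \ U'_t - L'_t.
\end{equation*}

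Next, I invoke Lemma \ref{lem:law} applied to the standard $R$-vein (which starts from $\ell = c = u = 0$). The third bullet of that lemma states that the marginal law of the rescaled gap process $((U'_s - L'_s)/\sqrt{2})_{s \geq 0}$ is exactly $\mathbf{Q}_0^R$, the law of the $R$-Bessel process started from zero. Combining the two displays gives $U^z_t - L^z_t \stackrel{(d)}{=} \sqrt{2}\, X_t$, where $(X_s)_{s \geq 0}$ has law $\mathbf{Q}_0^R$, which is the claim.

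There is essentially no obstacle here: the corollary is a direct assembly of the identification of the vein to $z$ as a standard $R$-vein (Lemma \ref{lem:veiny}(1)) with the marginal description of the gap of the standard $R$-vein in terms of the $R$-Bessel process (Lemma \ref{lem:law}). The only minor point to note is that the construction of the $R$-vein for unbounded $R$ given in Section \ref{sec:vein} via the truncation coupling ensures that both Lemma \ref{lem:veiny} and Lemma \ref{lem:law} apply in the generality needed here (i.e.\ for quadratic-regular $R$ as covered by Theorem \ref{thm:main}), so no additional limiting argument is required.
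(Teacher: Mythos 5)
Your proof is correct and follows the paper's own argument exactly: identify the vein to $z$ (up to translation) with the standard $R$-vein via Lemma \ref{lem:veiny}(1), then invoke the third bullet of Lemma \ref{lem:law} to get that $(U'_s - L'_s)/\sqrt{2}$ has law $\mathbf{Q}_0^R$.
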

\begin{proof}
Note that the height at time $t$ of the bubble containing $z$ is given by ${U}^z_t - {L}^z_t$, which, by the previous lemma, has the same law as $U_t - L_t$ under $\mathbf{Q}_{0,0,0}^R$. However, by Lemma \ref{lem:law}, $U_t-L_t$ has the law of $\sqrt{2}$ times $X_t$ where $(X_s)_{s \geq 0}$ has law $\mathbf{Q}_0^R$.
\end{proof}

We also note as a consequence of Lemma \ref{lem:law} that $C^z_t$ is uniformly distributed on $[L^z_t,U^z_t]$, and in particular, it follows that $x$ (the height of the point $z  = (t,x)$) is uniformly distributed on the random interval $[{L}^z_t,{U}^z_t]$.


\subsection{An extrinsic construction of the veins leading to $k$ points} \label{sec:extrinsic}

In the previous section we saw how to construct the vein $V^z$ to a single point $z$. In this section, for $k$ distinct points $z_1,\ldots,z_k$ of $(0,\infty) \times \mathbb{R}$, we consider the joint behaviour of the $k$ veins $V^{z_1},\ldots,V^{z_k}$ (and their continuations) leading to these points.

To lighten notation, we will write
\begin{align*}
V^j := (L^j_s,C^j_s,U^j_s)_{s \in [0,t_j]} \quad \text{and}  \quad \tilde{V}^j := (L^j_s,U^j_s)_{s \in [t_j,\tau_j]}
\end{align*}
for the vein leading to $z_j$ and its continuation. We will also write $\sigma_j$ and $\tau_j$ for the start and end time of the bubble containing $z_j$. 

\begin{figure}[h!]
\centering
\includegraphics[width=1.1\textwidth]{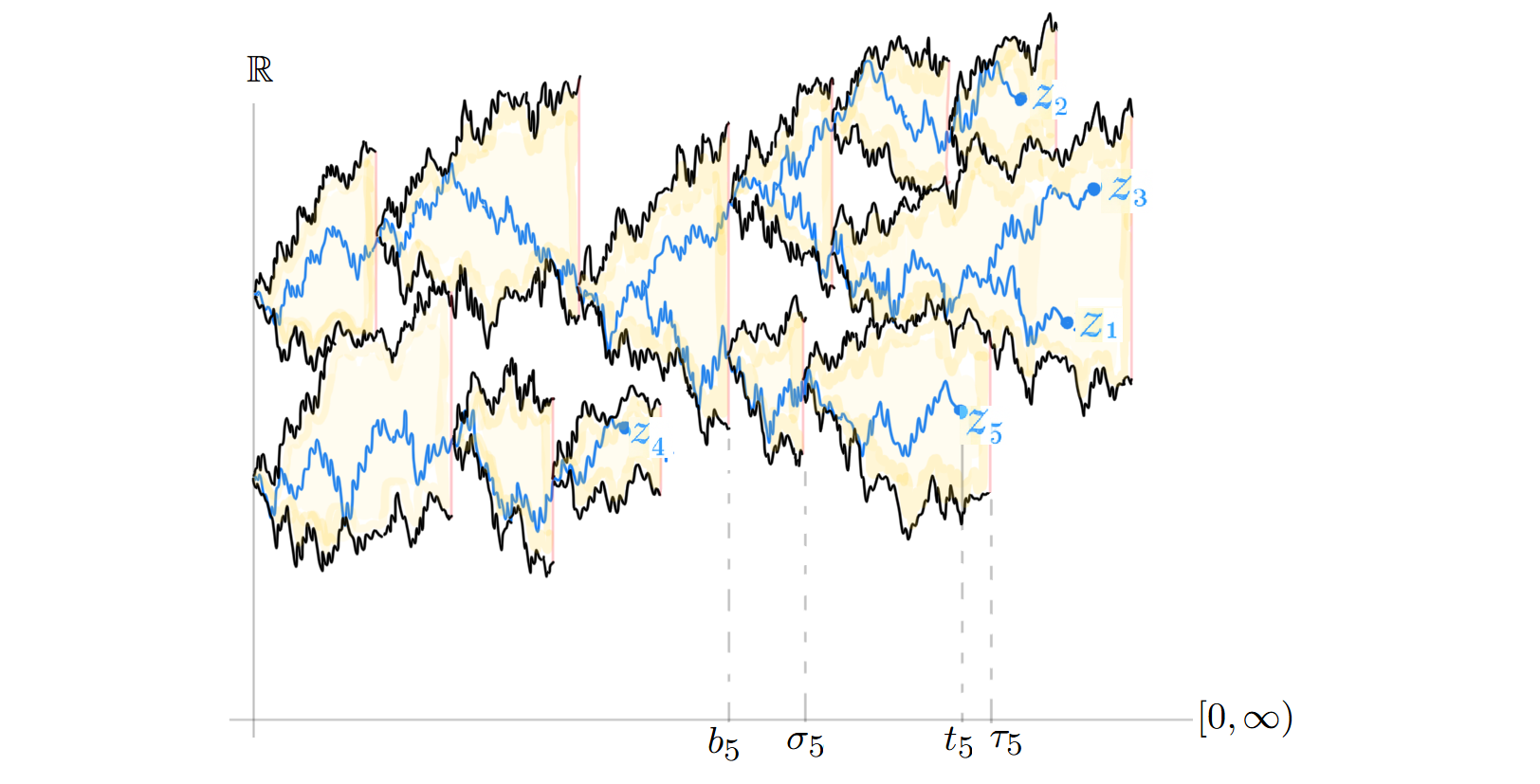}
\caption{The veins leading to $k=5$ distinct points. The times $b_5,\sigma_5,t_5$ and $\tau_5$ are listed on the time axis.}
\label{fig:8vein}
\end{figure}

Note that these veins may interact in several possible ways:
\begin{itemize}
\item It is entirely possible that some $z_j$ and $z_k$ (with $j \neq k)$ lie in the same bubble of $\mathcal{M}(R)$. In this case, we will have $\sigma_j = \sigma_k, \tau_j=  \tau_k$ and $L^k_s=L^j_s$ and $U^k_s = U^j_s$ for all $s \in [0,\tau_j] = [0,\tau_k]$. In this case, we will also have $C^j_s = C^k_s$ for all $s \in [0,c_{jk}]$, where $\sigma_j < c_{jk} < \min \{ t_j, t_k \}$ is the point in time at which the dual web paths emitted from $z_j$ and $z_k$ coalesce.  See for instance Figure \ref{fig:8vein}, where $z_1$ and $z_3$ are contained in the same bubble.
\item At some point in time, the lower part of one vein may be equal to the upper part of another vein, i.e.\ we may have $L^j_s = U^k_s$ for some $s$ is a non-empty time window. Going forwards in time, these paths will coincide until the first discontinuity event of either the $j^{\text{th}}$ or $k^{\text{th}}$ vein. {This occurs on many occasions in Figure \ref{fig:8vein}. For instance, $U^4$ coincides with $L^1$ for an initial period near the beginning of time.}

\item More generally, suppose that the dual paths associated with $z_j$ and $z_k$ coalesce at some spacetime point contained in a bubble of $\mathcal{M}(R)$ not containing both $z_j$ and $z_k$. Then the $j^{\text{th}}$ and $k^{\text{th}}$ vein will agree up until this bubble, and then separate afterwards. {We write $b_k$ for the first time that the $k^{\text{th}}$ vein is contained in a new bubble. See for instance in Figure \ref{fig:8vein}, where $b_5$ is the first time the vein associated with $k=5$ splits from the previous veins.}
\end{itemize}

We now describe how one can reconstruct the $R$-marble from these veins. Loosely speaking, we let
\begin{align*}
\alpha^k,\beta^k := \text{Lower, Upper web paths emitted from the $(0,2)$ point $(\sigma_k,C^k_{\sigma_k})$}.
\end{align*}
To be more precise, recall that $\sigma_k$ is the birth time of the bubble containing $z_k$. Then $(\sigma_k,C^k_{\sigma_k})$ is a $(0,2)$ point of the Brownian web, and thus there are respective lower and upper Brownian web paths $\alpha^k := (\alpha^k_t)_{t \geq \sigma_k}$ and $\beta^k := (\beta^k_t)_{t \geq \sigma_k}$ emitted from this $(0,2)$ point. These paths satisfy
\begin{align*}
\alpha^k_t = L_t^k \quad \text{and} \quad \beta^k_t = U_t^k \qquad \text{for $t \in [\sigma_k,\tau_k)$},
\end{align*}
though we emphasise that $\alpha^k_t $ and $\beta^k_t$ are defined for all $t \geq \sigma_k$. 
In particular, we have the strict inequality $\alpha^k_t < \beta^k_t$ for $t \in (\sigma_k,\tau_k)$. 

\begin{figure}[h!]
\centering
\includegraphics[width=0.9\textwidth]{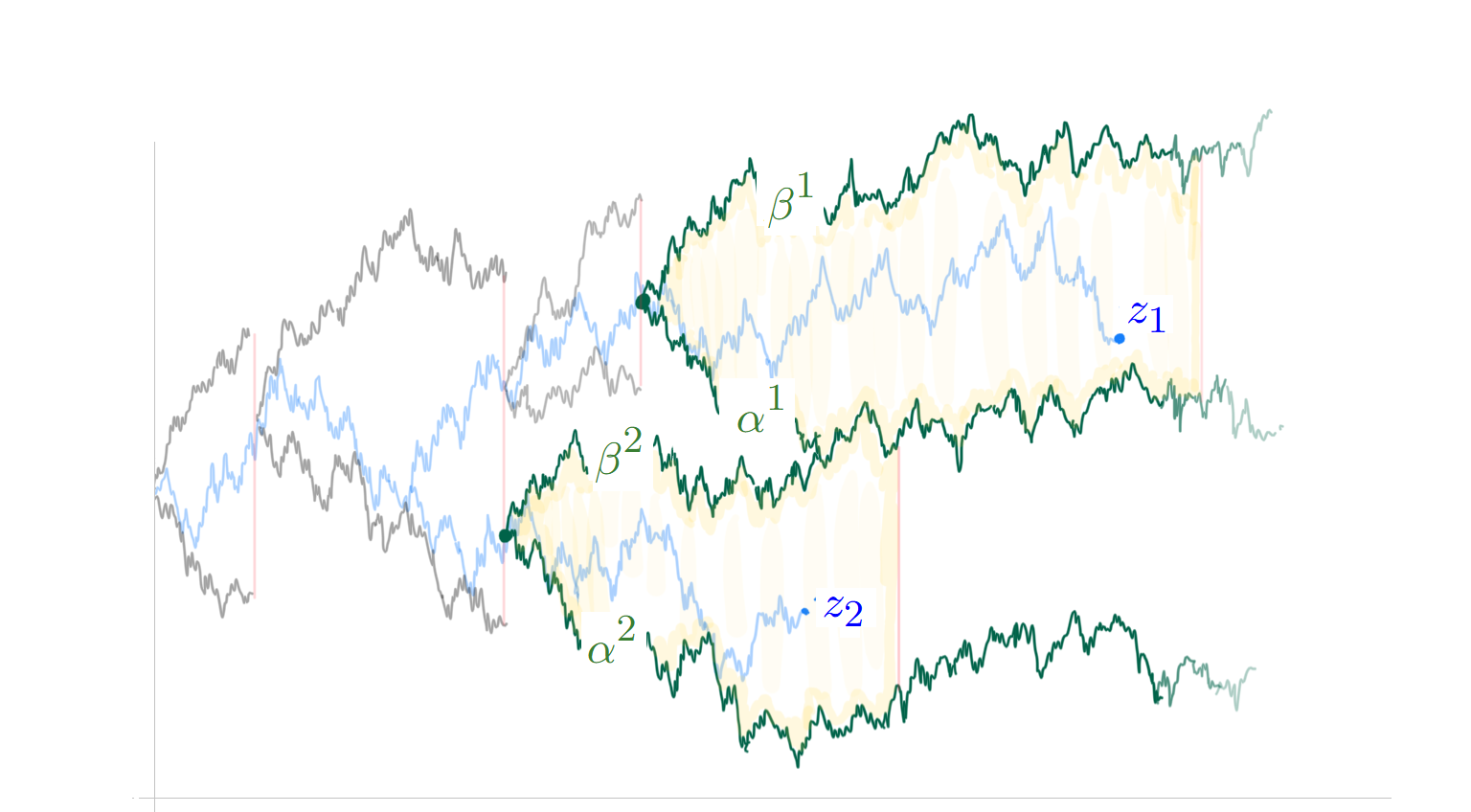}
\caption{The upper and lower paths emitted at the initial point of bubbles containing $z_1$ and $z_2$ are depicted in green.}
\label{fig:timvein}
\end{figure}

Recall that $\tau_k$ may simply be the moment at which Brownian paths $L_t^k$ and $U_t^k$ coalesce after time $t_k$, or it may denote the bubble `bursting'. In the latter case, we note that while we will still have $\alpha^k_{\tau_k} < \beta^k_{\tau_k}$, since $\alpha^k$ and $\beta^k$ are paths of the Brownian web, they will inevitably coalesce at some point in the future after time $t_k$. 

If $z_j$ and $z_k$ lie in the same bubble of $\mathcal{M}(R)$, we will have $(\alpha^k,\beta^k) = (\alpha^j,\beta^j)$. 

Define
\begin{align*}
\mathcal{M}_k(R) := \bigcup_{j=1}^k \{ \alpha^j, \beta^j \}
\end{align*}
to be the union of the $2k$ paths emitted from the $k$ $(0,2)$ start points of bubbles containing $z_1,\ldots,z_k$. 

Given a sequence $(\mathcal{N}_k)_{k \geq 1}$ of elements of $\mathcal{H}$ satisfying $\mathcal{N}_k \subseteq \mathcal{N}_{k+1}$, in the remainder of the article we will use the notation 
\begin{align*}
\overline{ \lim_{k \to \infty} }  \mathcal{N}_k := \text{Closure of the union } \bigcup_{k \geq 1} \mathcal{N}_k,
\end{align*}
where the closure is taken in the $\mathrm{d}_{\mathcal{H}}$ topology. 

The following lemma ensures that we can reconstruct the $R$-marble from these paths. 

\begin{proposition} \label{prop:reconstruct}
{Let $R$ be bounded.} 
We have
\begin{align} \label{eq:closedrel}
\mathcal{M}(R) := \overline{\lim_{k \to \infty}}\mathcal{M}_k(R).
\end{align}

\end{proposition}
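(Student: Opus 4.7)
The plan is to establish both inclusions of the claimed equality.

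The inclusion $\overline{\lim_{k \to \infty}} \mathcal{M}_k(R) \subseteq \mathcal{M}(R)$ is essentially automatic. Each path $\alpha^j$ or $\beta^j$ is, by construction, the lower or upper boundary of the bubble $\mathcal{B}^{z_j}$ of $\mathcal{M}(R)$, and hence lies in $\mathcal{M}(R)$. Since $\mathcal{M}(R) \in \mathcal{H}$ is a compact, and therefore closed, subset of the path space $(\Pi, d_{\mathrm{path}})$, taking closure preserves the inclusion.

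For the reverse inclusion $\mathcal{M}(R) \subseteq \overline{\lim_{k \to \infty}} \mathcal{M}_k(R)$, I would fix $\pi \in \mathcal{M}(R)$ with starting time $\sigma_\pi$ and produce a sequence of vein-boundary paths $\alpha^{j_n}$ or $\beta^{j_n}$ converging to $\pi$ in the path metric. The key observation is that at each time $t$ between $\sigma_\pi$ and the first coalescence of $\pi$ with a neighbouring path of $\mathcal{M}(R)$, the point $(t, \pi_t)$ lies on the topological boundary of some bubble $\mathcal{B}$ of $\mathcal{M}(R)$ sitting immediately to the left or right of $\pi$. By density of $\{z_j\}$ in $(0,\infty) \times \mathbb{R}$, some $z_j$ lies in $\mathcal{B}$, and then the corresponding path $\alpha^j$ or $\beta^j$ coincides with $\pi$ throughout the lifetime $[\sigma_{\mathcal{B}}, \tau_{\mathcal{B}})$ of $\mathcal{B}$.

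The main obstacle will be ensuring that the start time $\sigma_{\mathcal{B}}$ of an approximating bubble can be taken arbitrarily close to $\sigma_\pi$, since the path metric $d_{\mathrm{path}}$ requires proximity of starting times as well as uniform closeness on the overlap. To address this I would exploit the explicit construction of $\mathcal{M}(R)$ recalled in Section \ref{sec:marbleconstruction}: $\pi$ arises either as a path of the initial coalescing motion $\mathcal{M}_0$ starting on the $t=0$ axis, or as a path emitted from a fragmentation event at time $\sigma_\pi$ (at which new coalescing motions are launched from every point of a non-trivial interval, producing many bubbles of start time exactly $\sigma_\pi$ that border $\pi$), or as a path obtained by closure as a limit of such. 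In the first two cases a bubble with start time exactly $\sigma_\pi$ can be chosen directly; in the third case a diagonal argument yields bubbles whose start times converge to $\sigma_\pi$. The extension from bounded $R$ to general quadratic-regular $R$ is then carried out by passing to the limit $n \to \infty$ using the almost-sure convergence $\mathcal{M}(R_n) \to \mathcal{M}(R)$ of Theorem \ref{thm:main} together with stability of the bubble-boundary structure under the truncation $R_n(g) = R(g) \wedge n$.
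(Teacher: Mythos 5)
Your two inclusions and the overall strategy match the paper's proof, which is itself quite terse. The easy inclusion is handled identically. For the reverse inclusion, the paper simply asserts that ``every path of $\mathcal{M}(R)$ begins arbitrarily close to the space-time starting point of a bubble'' and that every bubble is explored by some $z_k$; you correctly isolate the first assertion as the crux and propose to justify it by unwinding the construction in Section~\ref{sec:marbleconstruction} (initial coalescing paths, fragmentation-born paths, and closure limits). This is a reasonable and more explicit version of the paper's argument, and your concern about the path metric $d_{\mathrm{path}}$ requiring proximity of starting times is well-placed.

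The one genuine problem is your final paragraph. Proposition~\ref{prop:reconstruct} is stated and used for \emph{bounded} $R$ only: it is invoked in the proof of Theorem~\ref{thm:main} to identify each truncated marble $\mathcal{M}(R_n)$ with $\overline{\lim_k}\,\mathcal{M}_k(R_n)$, and only afterwards is $\mathcal{M}(R)$ for unbounded quadratic-regular $R$ constructed as a limit of these. So the ``extension to general quadratic-regular $R$'' you append is not part of what the proposition claims, and the route you propose for it --- invoking the almost-sure convergence $\mathcal{M}(R_n)\to\mathcal{M}(R)$ from Theorem~\ref{thm:main} --- would be circular, since that theorem's proof in turn rests on the present proposition. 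Dropping that paragraph leaves a correct proof; including it as a needed step would introduce a logical loop.
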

\begin{proof}
Since each $\alpha^k$ and $\beta^k$ is an element of $\mathcal{M}(R)$, each $\mathcal{M}_k(R)$ is a subset of $\mathcal{M}(R)$. Since $\mathcal{M}(R)$ is closed, the set on the right-hand side of \eqref{eq:closedrel} is a subset of that on the left.

\begin{figure}[h!]
\centering
\includegraphics[width=0.9\textwidth]{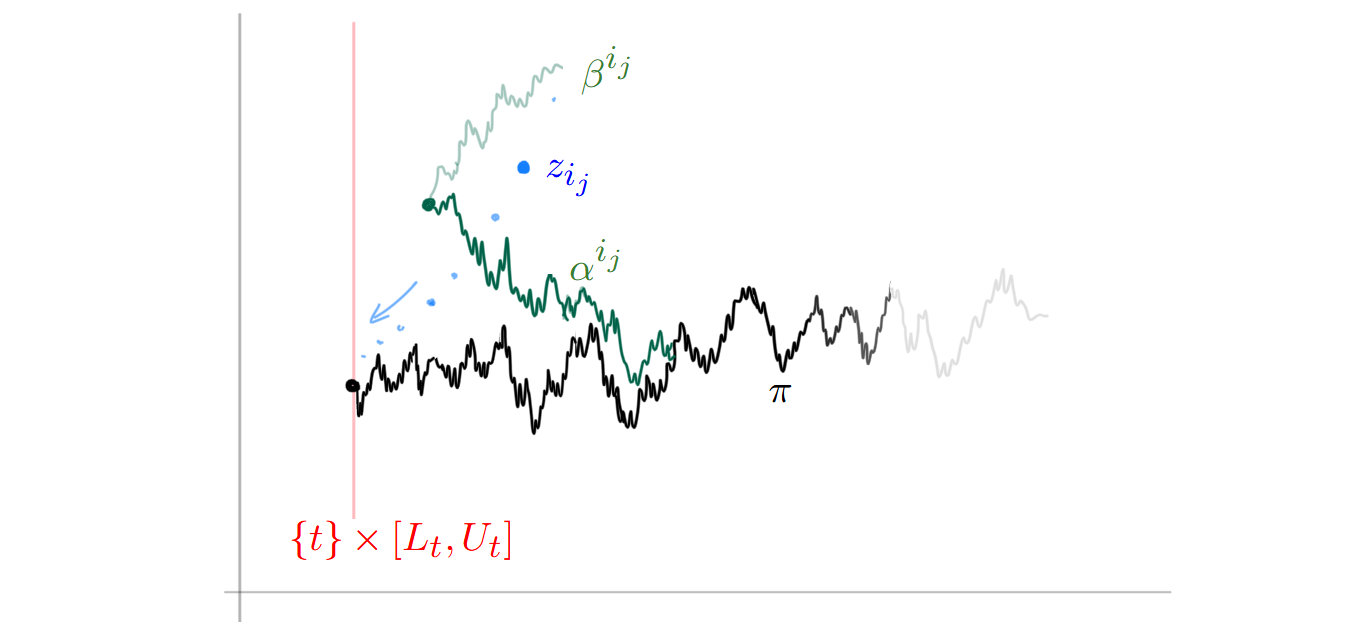}
\caption{As a subsequence of points $z_{i_j}$ approaches the start location of a path $\pi$ from above and to the right, the associated lower paths $\alpha_{i_j}$ converge to $\pi$.}
\label{fig:pathsabove}
\end{figure}

As for the converse inclusion, 
it is enough to show that for every path $\pi$ in $\mathcal{M}(R)$, and every $\varepsilon > 0$, there exists $k$ and some $\pi'$ in $\mathcal{M}_k(R)$ such that $\mathrm{d}_{\mathrm{path}}(\pi,\pi') < \varepsilon$. From the construction of the marble in Section \ref{sec:marbleconstruction}, we may assume that either $\sigma_\pi = 0$ or that $\pi$ was created at an event where an interval of the form $\{t\} \times [L_t,U_t]$ fragmented, in which case $\sigma_\pi = t$. Consider now a dense subsequence of points $z_{i_j} = (t_{i_j},x_{i_j})$ approaching the start point $(\sigma_\pi,\pi_{\sigma_\pi})$ from the right and from above the path (i.e. $t_{i_j} > \sigma_\pi$ and $x_{i_j} > \pi_{t_{i_j}}$ for each $i_j$). Each such point gives rise to a path $\alpha^{i_j}$ in $\mathcal{M}_{i_j}(R)$ that initiates to the left of $t_{i_j}$ and runs below the point $z_{i_j}$, and necessarily runs above the interval $\{t\} \times [L_t,U_t]$. Since the $z_{i_j}$ may be taken to approach $(\sigma_\pi, \pi_{\sigma_\pi})$ from any angle, it follows that there are a sequence of paths in $\mathcal{M}_{i_j}(R)$ converging to $\pi$; see Figure \ref{fig:pathsabove}.  It follows that $\pi$ is an element of $\overline{\lim_{k\to \infty}} \mathcal{M}_k(R)$, as required.


%
\end{proof}

We note from the proof of Proposition \ref{prop:reconstruct} it is in fact evident that $\mathcal{M}(R)$ may be recontructed using either just the lower or just the upper paths emitted from at the beginning of a bubble, i.e.\ we have
\begin{align*}
\mathcal{M}(R) = \overline{\lim_{k \to \infty}} \{ \alpha^1(R),\ldots,\alpha^k(R) \} = \overline{\lim_{k \to \infty}} \{ \beta^1(R),\ldots,\beta^k(R) \}.
\end{align*}

\section{An intrinsic construction of the veins leading to $k$ points and proof of our main results} \label{sec:mainproof}

We note that the construction in Section \ref{sec:explore} of the first $k$ veins was \emph{extrinsic} in the sense that in order to construct the law of the $k^{\text{th}}$ vein, we had to first construct the entire $R$-marble $\mathcal{M}(R)$, and thereafter track the progress of the bubbles leading up to the bubble containing $z_k$.

In this section, we will present an alternative \emph{intrinsic} probabilistic construction of the joint veins leading up to points $z_1,\ldots,z_k$. Thereafter, we can construct the marble from these veins as $k \to \infty$. This construction will have the advantage that we will be able to couple the joint veins leading up to these points for different rate functions $R$ simultaneously. This coupling will be robust to taking limits in truncated rate functions $R_n(g) = R(g) \wedge n$ in that our veins will converge as $n \to \infty$. 

\subsection{The intrinsic vein construction} \label{sec:intrinsic}
Let $R$ be a bounded and measurable rate function. 
Let $(\mathcal{P}_i : i \geq 1)$ be a sequence of independent and identically distributed Poisson processes on $[0,\infty)^2$ with Lebesgue intensity. Let $(\mathcal{W},\hat{\mathcal{W}})$ be a realisation of the Brownian web on $[0,\infty) \times \mathbb{R}$ together with its dual. Let $\mathcal{D} = \{ z_1,z_2,\ldots \}$ be a deterministic countable dense subset of $(0,\infty) \times \mathbb{R}$. We now describe how to use the series of Poisson processes $(\mathcal{P}_i : i \geq 1)$ to \emph{intrinsically} construct veins $V^1,\ldots,V^k$ and their continuations leading up to the points $z_1,\ldots,z_k$. The $j^{\text{th}}$ vein and its continuation will take the form
\begin{align*}
V^j = (L^j_s,C^j_s,U^j_s)_{s \in [0,t_j]} \quad \text{and} \quad \tilde{V}^j = (L^j_s,U^j_s)_{s \in [t_j,\tau_j]},
\end{align*}
so that $(t_j,C^j_{t_j}) = (t_j,x_j) = z_j$. The bubble containing $z_j$ will be the set
\begin{align*}
\mathcal{B}^j:= \{ (t,x) : \sigma_j < t < \tau_j : L^j_t < x < U^j_t \}.
\end{align*}

Having constructed the first $k-1$ veins and their continuations, the $k^{\text{th}}$ vein is constructed as follows: 
\begin{enumerate}
\item Let $\hat{\pi}^k := \{ (s,C^k_s) : s \in [0,t_k] \}$ be the dual web path emitted from $z_k = (t_k,x_k)$. 

\item Let
\begin{align*}
b_k := \inf \{ t \geq 0: C_t^k \notin [L_t^j,U_t^j] \quad \text{for all $j=1,\ldots,k-1$, $t \in [0,t_j]$} \}
\end{align*}
be the \textbf{separation time} of the $k^{\text{th}}$ vein. If $z_k$ lies in some bubble $\mathcal{B}^j$ for some $1 \leq j \leq k-1$, then we set $b_k = \inf \varnothing :=+ \infty$. We note that the separation time, if finite, is almost surely a fragmentation time of some bubble. 
\item For $t < b_k$, if $C^k_t \in [L^j_t,U^j_t]$ for some $j$, set $L^k_t=L^j_t$ and $U^k_t = U^j_t$. 
\item For $t \geq b_k$, $(L^k_t,U^k_t)_{b_k \leq t \leq t_k }$ initially begin by tracking the unique lower and upper paths in the Brownian web emitted from the (almost surely) $(0,2)$ point $(b_k,C^k_{b_k})$. 
\item For $t \in [b_k,t_k]$, if there exists a point $(t,y)$ of the Poisson process $\mathcal{P}_k$ for which
\begin{align} \label{eq:RR}
y \leq R(U^k_{t-}-L^{k}_{t-}),
\end{align}
then we have a discontinuity of the $k^{\text{th}}$ vein at time $t$: namely, we reset $(L^k_u,U^k_u)_{t \leq u \leq t_k }$ to be the stochastic processes tracking the unique lower and upper paths in the Brownian web emitted from the $(0,2)$ point $(t,C^k_{t})$. 
\item  After time $t_k$, $(L^k_s)_{s \geq t_k}$ and $(U^k_s)_{s \geq t_k}$ continue fluctuating according to independent Brownian motions that are killed at a random time $\tau_k$ after $t_k$. This random time $\tau_k$ occurs either as soon as $L^k$ and $U^k$ meet, or if there is a point $(t,y)$ of the Poisson process $\mathcal{P}_k$ with $t \geq t_k$ for which $y \leq R(U_{t-}^k-L_{t-}^k)$. 
\item Finally, $\sigma_k := \sup \{t \leq t_k : L_t^k = U^k_t \}$ is the birth time of the bubble containing $z_k$. 
\end{enumerate}

We note that for $t \in [b_k,\tau_k]$, the upper and lower paths $L^k$ and $U^k$ diffuse according to Brownian web paths that reflect off the path of $(C^k_t)_{t \in [0,t_k]}$ (which is only defined up until time $t_k$ rather than $\tau_k$), and may coalesce with the other lower and upper paths $\{L^j,U^j : 1 \leq j \leq k-1 \}$ forwards in time.

The rate function $R$ influences the construction only through the equation \eqref{eq:RR}. We will be interested in constructing the $k$ veins simultaneously for different values of $R$, and to emphasise this in the notation we will sometimes write 
\begin{align} \label{eq:VR}
    V^k := V^k(R) := (L^{k}_t(R),C^{k}_t,U^{k}_t(R))_{t \in [0,t_k]} \quad \text{and} \quad \tilde{V}^k = \tilde{V}^k(R) = (L^k_t(R),U^k_t(R))_{t \in [t_k,\tau_k(R)]}
\end{align}
for the vein to $z_k$ and its continuation. By Lemma \ref{lem:veiny}, the stochastic process $(L^k_t(R) - C^k_{t_k}, C^k_t - C^k_{t_k}, U^k_t(R) - C^k_{t_k})_{t \in [0,t_k]}$ has the law of a standard $R$-vein run until time $t_k$.

It is clear from the construction that the law of the collection of the stochastic processes $\{V^1,\ldots,V^k\}$ does not depend on the ordering of the set $\{z_1,\ldots,z_k\}$, and coincides with the law of the $k$ veins described in Section \ref{sec:extrinsic}. 

As in Section \ref{sec:extrinsic}, we may now construct the $R$-marble as a limit from these veins. Recalling that $\sigma_k(R)$ is the birth time of the bubble containing $z_k$, as in Section \ref{sec:extrinsic} let
\begin{align*}
\alpha^k(R),\beta^k(R) := \text{Lower, Upper web paths emitted from the $(0,2)$ point $(\sigma_k(R),C^k_{\sigma_k(R)})$}.
\end{align*}
Set
\begin{align} \label{eq:MkR}
\mathcal{M}_k(R) := \bigcup_{j=1}^k \{ \alpha^j(R),\beta^j(R) \}
\end{align}
and define
\begin{align} \label{eq:MkRlim}
\mathcal{M}(R) := \overline{\lim_{k \to \infty}} \mathcal{M}_k(R).
\end{align}
Since each $\mathcal{M}_k(R)$ has the law of the $k$ veins in the $R$-marble, it follows from Proposition \ref{prop:reconstruct} that this new random variable $\mathcal{M}(R)$ constructed from the veins has the law of the $R$-marble.

Moreover, for different bounded rate functions $R$, the random variables $\mathcal{M}(R)$ are coupled in that they are constructed using the same copy of the Brownian web and its dual $(\mathcal{W},\hat{\mathcal{W}})$ and the same sequence $(\mathcal{P}_i : i \geq 1)$ of Poisson processes.

\subsection{Proof of Theorem \ref{thm:main} in the upper regular case} 

In this section, we are ready to prove the easier half of Theorem \ref{thm:main}. That is, we now prove Theorem \ref{thm:main} in the case where $R$ is upper quadratic-regular, i.e.\ $R$ is decreasing and either
\begin{align*}
\liminf_{g \downarrow 0} g^2 R(g)  > 6  \qquad \text{or} \qquad R(g) = 6/g^2 \quad \text{for $g > 0$}.
\end{align*}

Before the proof, we require the following proposition:

\begin{proposition} \label{prop:webby}
Let $R$ be upper quadratic-regular. Let $R_n(g) := R(g) \wedge n$. Then as $n \to \infty$ we have the almost sure convergence
\begin{align*}
\mathcal{M}_k(R_n ) \to \mathcal{W}_k := \{ \pi^{z_1},\ldots,\pi^{z_k} \}
\end{align*}
where $\pi^{z_k}$ is simply the Brownian web path emitted from $z_k$.
\end{proposition}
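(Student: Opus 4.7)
Working under the intrinsic coupling of Section \ref{sec:intrinsic}, the dual web path $C^k$ is independent of $n$, so the plan is to show that the birth time $\sigma_k(R_n)$ of the bubble containing $z_k$ rises to $t_k$ almost surely as $n \to \infty$. The exceptional starting point $(\sigma_k(R_n), C^k_{\sigma_k(R_n)})$ of the two Brownian web paths $\alpha^k(R_n)$ and $\beta^k(R_n)$ will then converge to $z_k$; since $z_k$ is deterministic, Theorem \ref{T:webchar}(a) yields a unique Brownian web path $\pi^{z_k}$ emitted from it, and compactness of $\mathcal{W}$ in $(\Pi, d_{\mathrm{path}})$ will force every $d_{\mathrm{path}}$-sub-sequential limit of $\alpha^k(R_n)$ and of $\beta^k(R_n)$ to coincide with $\pi^{z_k}$. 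Taking the union over $j=1,\ldots,k$ yields the Hausdorff convergence $\mathcal{M}_k(R_n) \to \mathcal{W}_k$.

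\textbf{Shrinking bubbles via the $R$-Bessel process.} The crux of the argument is the almost-sure convergence $\sigma_k(R_n) \uparrow t_k$. Since $R_n \leq R_{n+1}$ pointwise, a monotone coupling of the intrinsic construction entirely analogous to the one established in the lemma preceding Lemma \ref{lem:beslim} shows that $\sigma_k(R_n)$ is non-decreasing in $n$. By Lemma \ref{lem:veiny}(1), the vein $V^k(R_n)$ has the law of a standard $R_n$-vein, so by Lemma \ref{lem:law} the height process $(U^k_t(R_n) - L^k_t(R_n))/\sqrt{2}$ is distributed as an $R_n$-Bessel process started from $0$, and $t_k - \sigma_k(R_n)$ is distributed as the length of its excursion straddling time $t_k$. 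For $R = R_\lambda$ with $\lambda \geq 6$, Proposition \ref{prop:zero} states that the maximum excursion length up to time $t_k$ vanishes in probability as $n \to \infty$. For a general upper quadratic-regular $R$, the same conclusion follows from the $R_6$-Bessel coupling on small excursion heights used in the proof of Theorem \ref{thm:newtransition}. Combined with the monotonicity, this gives $\sigma_k(R_n) \uparrow t_k$ almost surely.

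\textbf{Path convergence and the main obstacle.} Since $s \mapsto C^k_s$ is continuous and independent of $n$, the exceptional points $(\sigma_k(R_n), C^k_{\sigma_k(R_n)})$ converge almost surely to $(t_k, x_k) = z_k$ in $\overline{[0,\infty) \times \mathbb{R}}$. The paths $\alpha^k(R_n), \beta^k(R_n)$ belong to the almost-surely compact set $\mathcal{W} \subset (\Pi, d_{\mathrm{path}})$, and have starting times and positions converging to those of $z_k$. Any $d_{\mathrm{path}}$-sub-sequential limit of either sequence therefore lies in $\mathcal{W}$ and starts at the deterministic point $z_k$; by Theorem \ref{T:webchar}(a), the only such path is $\pi^{z_k}$. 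Hence $\alpha^k(R_n), \beta^k(R_n) \to \pi^{z_k}$ in $d_{\mathrm{path}}$, and since $k$ is finite this delivers $d_{\mathcal{H}}(\mathcal{M}_k(R_n), \mathcal{W}_k) \to 0$. The main technical obstacle is verifying the pathwise monotonicity of the intrinsic coupling across different $R_n$: between jump times the boundaries of the vein are specific reflected Brownian web paths that need not themselves be monotone in $R$, so the gap comparison $U^k_t(R_{n+1}) - L^k_t(R_{n+1}) \leq U^k_t(R_n) - L^k_t(R_n)$ must be re-established after each common jump time, mirroring the staircase coupling used for the $R$-Bessel process.
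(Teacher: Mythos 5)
Your proof follows essentially the same route as the paper's: via Lemma \ref{lem:veiny} the height process of each vein is an $R_n$-Bessel process, upper-regularity forces it to the zero process as $n\to\infty$ (Theorem \ref{thm:newtransition} / Proposition \ref{prop:zero}), hence $\sigma_j(R_n)\to t_j$, and the two web paths $\alpha^j(R_n),\beta^j(R_n)$ emitted from the exceptional birth point collapse onto $\pi^{z_j}$. Your compactness-plus-uniqueness closing step via Theorem \ref{T:webchar}(a) is a tidy way to make the final path convergence precise, which the paper leaves implicit.

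The \emph{main obstacle} you flag --- pathwise monotonicity of the intrinsic coupling across truncation levels --- is real and is supplied by Lemma \ref{lem:nested} in the following subsection; the mechanism is precisely the one you anticipate. Between jump times, once the inclusion $[L^1_t(R_{n'}),U^1_t(R_{n'})]\subseteq[L^1_t(R_n),U^1_t(R_n)]$ holds at some instant, both boundary pairs are coalescing Brownian web paths reflected off the same dual path $C$, so they cannot cross and the inclusion persists automatically (it is not that the individual reflected paths fail to be monotone --- non-crossing preserves the order once established). At jump times, the monotone rate comparison $R_{n'}\bigl(U^1_t(R_{n'})-L^1_t(R_{n'})\bigr)\geq R_n\bigl(U^1_t(R_n)-L^1_t(R_n)\bigr)$ combined with the shared Poisson process $\mathcal{P}^1$ guarantees the inner interval resets whenever the outer one does, so the nestedness is re-established. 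That is the ``staircase'' you describe, and with Lemma \ref{lem:nested} in hand your argument closes.
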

\begin{proof}
Let $V^j(R_n)$ be the $j^{\text{th}}$ vein for the $n^{\text{th}}$ truncation $R_n$ of the rate function $R$. Then since the stochastic process $U^j(R_n) - L^j(R_n)$ has the same law as $\sqrt{2}X$ under $\mathbf{Q}_0^{R_n}$ on $[0,\tau_j(R_n)]$, by Theorem \ref{thm:newtransition} it converges to the zero process on $[0,t_j]$ as $n \to \infty$. In particular, it follows that the birth time $\sigma_j(R_n)$ of the bubble containing $z_j$ converges to $t_j$ as $n$ tends to infinity. In particular, the lower and upper paths $\alpha^j(R_n)$ and $\beta^j(R_n)$ emitted from the $(0,2)$ point $(\sigma_j, C^j_{\sigma_j(R_n)})$ both converge (in the path metric) as $n \to \infty$ to the unique Brownian web path emitted from $(t_j,C^j_{t_j}) = (t_j,x_j) = z_j$. 

\end{proof}

\begin{proof}[Proof of Theorem \ref{thm:main} in the upper quadratic-regular case]

Since $\mathcal{M}(R_n)$ is a subset of the Brownian web, it is sufficient to establish that for every  $\pi$ in $\mathcal{W}$ and $\varepsilon > 0$ there exists $n_0$ such that for all $n \geq n_0$, $\mathcal{M}(R_n)$ contains a path $\pi_n$ satisfying $\mathrm{d}_{\mathrm{path}}(\pi, \pi_n) < \varepsilon$. 
Fix $j$ such that the (almost surely) unique Brownian web path $\pi^{z_j}$ emitted from $z_j$ satisfies $\mathrm{d}_{\mathrm{path}}(\pi^{z_j},\pi) < \varepsilon/2$. 
Now according to the proof of Proposition \ref{prop:webby}, we can choose $n_0$ such that for all $n \geq n_0$ we have $\mathrm{d}_{\mathrm{path}}( \alpha^j(R_n) , \pi^{z_j} ) < \varepsilon/2$. 
(A similar statement is true for $\beta^j(R_n)$.) In particular, for all $n \geq n_0$, by the triangle inequality we have 
\begin{align*}
\mathrm{d}_{\mathrm{path}}( \alpha^j(R_n) , \pi ) \leq \mathrm{d}_{\mathrm{path}}( \alpha^j(R_n) , \pi^{z_j} ) + \mathrm{d}_{\mathrm{path}}( \pi^{z_j} , \pi ) < \varepsilon,
\end{align*}
thereby completing the proof.

\end{proof}

\subsection{Proof of Theorem \ref{thm:main} in the lower quadratic-regular case}
In this section we work towards our proof of Theorem \ref{thm:main} in the more difficult case where $R$ is lower quadratic-regular, i.e.\ where $R:(0,\infty) \to [0,\infty)$ is a decreasing function satisfying $\limsup_{ g \downarrow 0} g^2 R(g) < 6$.

The main task at hand is establishing the convergence of the first $k$ veins associated with the truncated rate function $R_n(g)$ as $n \to \infty$, where we recall from the previous section the construction of the set $\mathcal{M}_k(R)$ of veins for any bounded $R$. Namely, most of our work in this section involves establishing the following proposition:

\begin{proposition} \label{prop:local}
Suppose that $R$ is lower quadratic-regular. Let $R_n(g) = R(g) \wedge n$. Let $V^j(R_n)$ for $j =1,\ldots,k$ be the veins constructed in Section \ref{sec:intrinsic} for bounded rate functions. Then almost surely:

\begin{enumerate}
\item The first $k$ veins are \textbf{ultimately nested}: That is, there exists a random $n_0 := n_0(z_1,\ldots,z_k)$ such that for all $n_2 \geq n_1 \geq n_0$ we have 
\begin{align*}
L^j_t(R_{n_1}) \leq L^j_t(R_{n_2}) \leq C_t^j \leq U^j_t(R_{n_2}) \leq U^j_t(R_{n_1}) \quad \text{for $ t \in [0,t_j].$}
\end{align*}
In particular, for each $j=1,\ldots,k$ and $t \in [0,t_j]$, the following limits exist:
\begin{align*}
L_t(R) := \lim_{n \to \infty} L_t^j(R_n) \quad \text{and} \quad U_t(R) := \lim_{n \to \infty} U_t^j(R_n).
\end{align*}
\item The first $k$ veins are \textbf{ultimately equal away from zero}: That is, for every $\varepsilon > 0$, there exists a random $n_0 = n_0(\varepsilon)$ such that for all $n \geq n_0$, for each $j =1,\ldots,k$ we in fact have the equalities 
\begin{align*}
L^j_t(R_n) = L^j_t(R) \quad \text{and} \quad U^j_t(R_n) = U^j_t(R)
\end{align*}
for all $t \in [0,t_j]$ not contained in the initial $\varepsilon$ time period of any given excursion of $(U^j_t(R) - L^j_t(R))_{t \in [0,t_j]}$.

Moreover, there exists $n_0'$ such that for all $n \geq n_0'$, the continued veins satisfy $\tau_j(R_n) = \tau_j(R)$ and $\tilde{V}^j(R_n) = \tilde{V}^j(R)$. 

\item With the notation of \eqref{eq:MkR}, there is a random variable $\mathcal{M}_k(R)$ such that 
as $n \to \infty$, almost surely we have the convergence
\begin{align} \label{eq:Mkconv}
\mathcal{M}_k(R_n) \to \mathcal{M}_k(R)
\end{align}
in the $\mathrm{d}_{\mathcal{H}}$ metric.
\end{enumerate}

\end{proposition}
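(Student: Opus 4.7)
The plan is to exploit the intrinsic coupling of Section \ref{sec:intrinsic}: for each bounded rate function $R$ the veins $V^j(R)$ are built from a single realisation of the Brownian web $\mathcal{W}$, its dual $\hat{\mathcal{W}}$, and a fixed family of Poisson processes $(\mathcal{P}_i)_{i \geq 1}$. The central observation is the pointwise monotonicity $R_{n_1}(g) \leq R_{n_2}(g)$ for $n_1 \leq n_2$, which, once combined with the nesting, forces the Poisson points that trigger jumps of $V^j(R_{n_1})$ to be a subset of those triggering jumps of $V^j(R_{n_2})$.

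For part (1) I would proceed by induction on the union of jump times of the two veins. On any continuous phase the coordinates $L^j, C^j, U^j$ evolve as Brownian web paths reflected against the single dual path $C^j$, and the non-crossing property of $\mathcal{W}$ preserves an ordering of the form $L^j(R_{n_1}) \leq L^j(R_{n_2}) \leq C^j \leq U^j(R_{n_2}) \leq U^j(R_{n_1})$. At a Poisson point $(t,y) \in \mathcal{P}_j$ that triggers a jump only in $V^j(R_{n_2})$, the $R_{n_2}$-vein resets its $L$ and $U$ values just below and above $C^j_t$, which remain in $[L^j_t(R_{n_1}), U^j_t(R_{n_1})]$ by the inductive hypothesis; shared jumps are trivial. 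The random $n_0$ in the statement accommodates the interacting-vein construction: whether $z_k$ lies inside an earlier bubble and the value of the separation time $b_k$ depend on $R$, and only for $n$ large do the bubble memberships of $z_1, \ldots, z_k$ and the separation times stabilise into a consistent nested structure across all $k$ veins.

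For part (2) I would fix $\varepsilon > 0$ and argue first that on the compact time interval $[0, t_j]$ the $R$-vein admits only finitely many excursions of length at least $\varepsilon$; this follows from the subordinator description of excursion lengths established in Section \ref{sec:singleproof} (Proposition \ref{prop:subor}). For each such long excursion $[\sigma, \tau)$ of $V^j(R)$, the starting time $\sigma$ is either $0$ or a specific Poisson point $(\sigma, y_\sigma) \in \mathcal{P}_j$ that triggers the jump ending the previous excursion, and by definition no further jumps occur in $(\sigma, \tau)$. I would then show that for $n$ sufficiently large (a) $(\sigma, y_\sigma)$ is also a jump of $V^j(R_n)$, so the long excursions of $R$ and $R_n$ begin synchronously with identical initial conditions, and (b) by the coupling no jumps of $V^j(R_n)$ occur inside $(\sigma, \tau)$ either, so on $[\sigma, \tau)$ the two veins follow the same reflected Brownian web paths and therefore agree on $[\sigma + \varepsilon, \tau)$. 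The claim for the continuations $\tilde V^j$ follows from the same argument applied beyond $t_j$, where $L^j$ and $U^j$ are pairs of coalescing Brownian motions killed at the next shared jump or meeting.

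For part (3) I would use that $\mathcal{M}_k(R_n) = \bigcup_{j=1}^k \{\alpha^j(R_n), \beta^j(R_n)\}$, where $\alpha^j(R_n), \beta^j(R_n)$ are the lower and upper Brownian web paths emitted from the exceptional point $(\sigma_j(R_n), C^j_{\sigma_j(R_n)})$. Part (2) provides $\sigma_j(R_n) \to \sigma_j(R)$ almost surely, and combined with the continuity in $\mathcal{W}$ of the map sending a space-time point on $C^j$ to the two forward web paths emitted from it (a compactness consequence of Theorem \ref{T:webchar}), this yields $\alpha^j(R_n) \to \alpha^j(R)$ and $\beta^j(R_n) \to \beta^j(R)$ in $\mathrm{d}_{\mathrm{path}}$; convergence of the finite union $\mathcal{M}_k(R_n) \to \mathcal{M}_k(R)$ in $\mathrm{d}_{\mathcal{H}}$ is then immediate. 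The hard part will be point (a) above: at a boundary jump of $V^j(R)$ at $(\sigma, y_\sigma)$ the analogous jump condition for $V^j(R_n)$ reads $y_\sigma \leq R(U^j_{\sigma^-}(R_n) - L^j_{\sigma^-}(R_n)) \wedge n$, and because the $R_n$-vein's gap may be \emph{larger} than the $R$-vein's gap by the nesting of part (1), $R$ evaluated on it is \emph{smaller}, making this condition strictly more restrictive than $y_\sigma \leq n$. Reconciling this requires a forward induction over the finitely many long excursions of $R$, showing that on each such excursion the $R_n$- and $R$-gaps actually coincide once the excursion has begun synchronously, and thereby propagating the agreement to the Poisson point at the boundary of the next long excursion.
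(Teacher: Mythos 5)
Your part (1) is correct and matches the paper's approach: the paper proves nestedness via Lemma \ref{lem:nested}, which runs exactly the induction on jump times you describe (continuous phases preserve order by the non-crossing property, and a jump of the inner vein resets to $C^j_t$ inside the outer interval), and then extends to several interacting veins by an induction on $k$ that handles the separation-time issue you flag.

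Your part (2) contains a genuine gap. You propose to show that the Poisson point $(\sigma, y_\sigma)$ initiating a long $R$-excursion is also a jump of $V^j(R_n)$, so that the excursions begin ``synchronously with identical initial conditions,'' and then to propagate this forward. But this cannot work, and the forward induction cannot rescue it. Just before the start $a_i$ of a long $R$-excursion, the limit height process $u_t - \ell_t$ has passed through an accumulation of infinitely many short excursions (its zero set is the closure of the range of a stable subordinator, which accumulates at $a_i$ from the left). The $R_n$-vein, whose jump rate is capped at $n$, experiences none of this cascade, so $U^j_{a_i^-}(R_n) - L^j_{a_i^-}(R_n)$ is strictly positive even though $u_{a_i} - \ell_{a_i} = 0$. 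The jump condition $y_\sigma \leq R_n\bigl(U^j_{a_i^-}(R_n) - L^j_{a_i^-}(R_n)\bigr)$, with $R_n$ bounded by $n$ and the argument bounded away from zero, can fail for arbitrarily large fixed $y_\sigma$; the $R$-side jump, by contrast, is triggered by $R(0^+) = \infty$. In short, your own correct observation that jumps of $V^j(R_{n_1})$ are contained in jumps of $V^j(R_{n_2})$ for $n_1 \leq n_2$ already shows that $R_n$-jumps form a \emph{strict} subset of $R$-jumps, so the base of your induction (synchronous start) is false. The paper's argument is different: it uses the pointwise convergence $L^1_{a_i}(R_n) \to \ell_{a_i}$ established in part (1), notes that nestedness forbids any jump of $V^j(R_n)$ inside $(a_i,b_i)$ (a jump would put $L^j(R_n)$ at $C^j_t$, strictly above $\ell_t$, contradicting $L^j(R_n) \leq \ell$), and then appeals to the fact that Brownian web paths whose starting positions converge together must coalesce within time $\varepsilon$ for $n$ large. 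No synchronisation of Poisson jumps is used or needed.

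Your part (3) also leans on an unproved step: you assert that ``Part (2) provides $\sigma_j(R_n) \to \sigma_j(R)$ almost surely,'' but part (2) concerns equality of the paths away from excursion starts, not convergence of the excursion start time itself. The paper proves the start-time convergence separately and by a different mechanism: since $\sigma_j(R_n)$ is nondecreasing and bounded above by $\sigma_j(R)$, if the limit were strictly smaller then the emitted lower and upper paths from the limiting exceptional point would approach $(\sigma_j(R), C^j_{\sigma_j(R)})$ arbitrarily closely from both sides, forcing that point to be simultaneously a coalescing point and an exceptional $(1,2)$-point, which happens with probability zero.
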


\begin{figure}[h]
\begin{center}
\includegraphics[width=9cm]{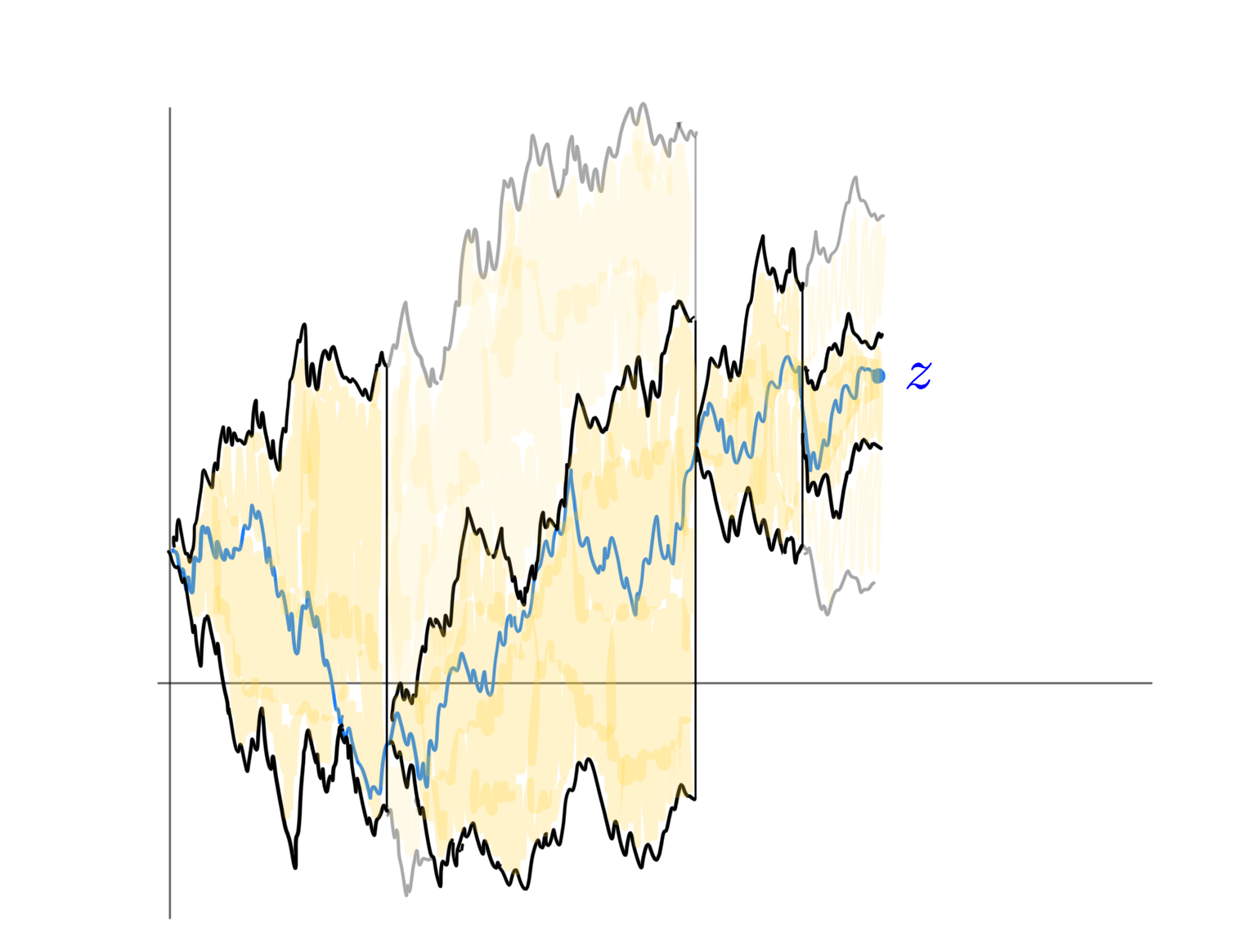}
\caption{Given two decreasing fragmentation rates $R_1$ and $R_2$ with $R_2(g) \geq R_1(g)$, there is a natural coupling of any given vein such that the vein associated with the larger fragmentation rate sits inside the vein with the smaller fragmentation rate. In the diagram here, the vein associated with $R_1$ is depicted in grey and the vein associated with $R_2$ is depicted in black.}
\label{fig:dveinmulti99}
\end{center}
\end{figure}

In order to prepare the proof of Proposition \ref{prop:local}, let us touch further on the idea of \textbf{nestedness} of veins.
This is the idea that under certain conditions on fragmentation rates $R_1$ and $R_2$, we have the containment of one vein inside another, in that for each $t \in [0,t_j]$ we will have the inclusion of intervals
\begin{align*}
[L^j_t(R_2),U^j_t(R_2)] \subseteq[L^j_t(R_1),U^j_t(R_1)]. 
\end{align*}
See Figure \ref{fig:dveinmulti99} for a depiction of the nestedness of two veins. 
We will ultimately be interested in characterising the possible nestedness of the first $k$ veins for rate functions $R_{n}(g)$ and $R_{n'}(g)$ where we have two different degrees $n$ and $n'$ of truncation. The story is reasonably straightforward when $k = 1$:
\begin{lemma} \label{lem:nested}
Let $R,R':(0,\infty) \to (0,\infty)$ be nonincreasing bounded functions satisfying $R'(g) \geq R(g)$ for all $g \in (0,\infty)$. Then the vein $V^1(R')$ is nested inside the vein $V^1(R)$ in that:
\begin{align*}
[L^1_t(R'),U^1_t(R')] \subseteq [L^1_t(R),U^1_t(R)]. 
\end{align*}
\end{lemma}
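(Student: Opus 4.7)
\emph{Approach.} Both $V^1(R)$ and $V^1(R')$ are constructed in Section \ref{sec:intrinsic} from the same realisation of the Brownian web and its dual $(\mathcal{W},\hat{\mathcal{W}})$, the same dual path $\hat{\pi}^1 = \{(s, C^1_s): s \in [0,t_1]\}$, and the same Poisson process $\mathcal{P}_1$ on $[0,\infty)^2$; the rate function enters only through the acceptance rule \eqref{eq:RR}. Since $R$ and $R'$ are bounded, only finitely many atoms of $\mathcal{P}_1$ in $[0,t_1] \times [0, \|R'\|_\infty]$ can trigger jumps in either vein, so the plan is to prove the inclusion by induction over these atoms in increasing order of time. (Note that $k=1$ gives $b_1 = 0$, so no complication arises from the nested-bubble initialisation in step~(3) of the construction.)

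The base case is trivial: at $t=0$ both veins emanate from $C^1_0$. For the induction step, assume nestedness holds at some time $\tau$, i.e.\ $L^1_\tau(R) \leq L^1_\tau(R') \leq C^1_\tau \leq U^1_\tau(R') \leq U^1_\tau(R)$, with the inner inequalities built into the reflective construction. On the open interval between $\tau$ and the next atom, each of the four boundary processes evolves as a path of the fixed web $\mathcal{W}$ reflected off $\hat{\pi}^1$ from the appropriate side; since paths of $\mathcal{W}$ cannot cross and may only coalesce, the nested ordering is preserved throughout this free-evolution phase. At the next atom $(\tau',y)$ there are three cases. If $y > R'(U^1_{\tau'-}(R') - L^1_{\tau'-}(R'))$, then because $R$ is nonincreasing and $R' \geq R$, the inductive gap inequality $U^1_{\tau'-}(R') - L^1_{\tau'-}(R') \leq U^1_{\tau'-}(R) - L^1_{\tau'-}(R)$ also forces $y > R(U^1_{\tau'-}(R) - L^1_{\tau'-}(R))$, so neither vein jumps. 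If $y \leq R(U^1_{\tau'-}(R) - L^1_{\tau'-}(R))$, the same chain of inequalities forces $y \leq R'(U^1_{\tau'-}(R') - L^1_{\tau'-}(R'))$, so both veins jump and restart at the exceptional point $(\tau', C^1_{\tau'})$, emitting the unique pair of upper and lower web paths there; the two veins in fact coincide just after $\tau'$. In the intermediate case only $V^1(R')$ jumps, and it restarts from $(\tau', C^1_{\tau'})$, which lies inside the unchanged interval $[L^1_{\tau'}(R), U^1_{\tau'}(R)]$ by the reflection property, so nestedness persists.

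The one structural fact I rely on is the non-crossing property of $\mathcal{W}$ needed for the free-evolution argument, namely that four web paths starting in nested order and each reflecting off the common dual path $\hat{\pi}^1$ from a prescribed side remain in nested order (with possible coalescence). This is inherent to the Brownian web and is encoded in the conditional reflective-coalescing description of Remark \ref{rem:partial2}, so I expect no genuine obstacle; the real content of the lemma is the elementary monotonicity argument above showing that every fragmentation event experienced by $V^1(R)$ is automatically forced upon $V^1(R')$ as well.
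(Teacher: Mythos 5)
Your proposal is correct and takes essentially the same approach as the paper's proof: it hinges on the monotonicity chain $R'(U'_{t-}-L'_{t-}) \geq R'(U_{t-}-L_{t-}) \geq R(U_{t-}-L_{t-})$ (using first that $R'$ is nonincreasing applied to the inductive inclusion of intervals, then $R'\geq R$), combined with the coupling of both veins to the same Poisson process $\mathcal{P}_1$, so that any collapse triggered for $V^1(R)$ is automatically triggered for $V^1(R')$. Your write-up is somewhat more explicit than the paper's (the paper leaves the induction over Poisson atoms, the free-evolution step via the non-crossing property of the web, and the three-case split at each atom implicit), but the underlying argument is identical.
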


\begin{proof}
This proof is analogous to that of Lemma \ref{lem:beslim}. 
Provided that at some moment in time we have $[L^1_t(R'),U^1_t(R')] \subseteq [L^1_t(R),U^1_t(R)]$, we also have the inequality of collapse rates
\begin{align*}
R'( U^1_t(R') - L^1_t(R')) \geq R'( U^1_t(R) - L^1_t(R)) \geq R( U^1_t(R) - L^1_t(R)),
\end{align*}
where to obtain the first inequality above we used the fact that $R'$ is decreasing, and to establish the second we used $R'(g) \geq R(g)$. Since the collapses of the two veins are coupled by the same Poisson process $\mathcal{P}^1$, it follows that if the interval $[L^1_t(R),U^1_t(R)]$ collapses, so does $[L^1_t(R'),U^1_t(R')]$. Thus we have the purported inclusion of intervals for all time $t \in [0,t_1]$. 
\end{proof}

Unfortunately, the simple generalisation of Lemma \ref{lem:nested} to several veins is not true. That is, if $R$ and $R'$ are as in the statement of Lemma \ref{lem:nested}, it is not necessarily true that for all $j \geq 2$ we have the inclusion $[L^j_t(R'),U^j_t(R')] \subseteq [L^j_t(R),U^j_t(R)]$. 

\begin{figure}[h]
\begin{center}
\includegraphics[width=9cm]{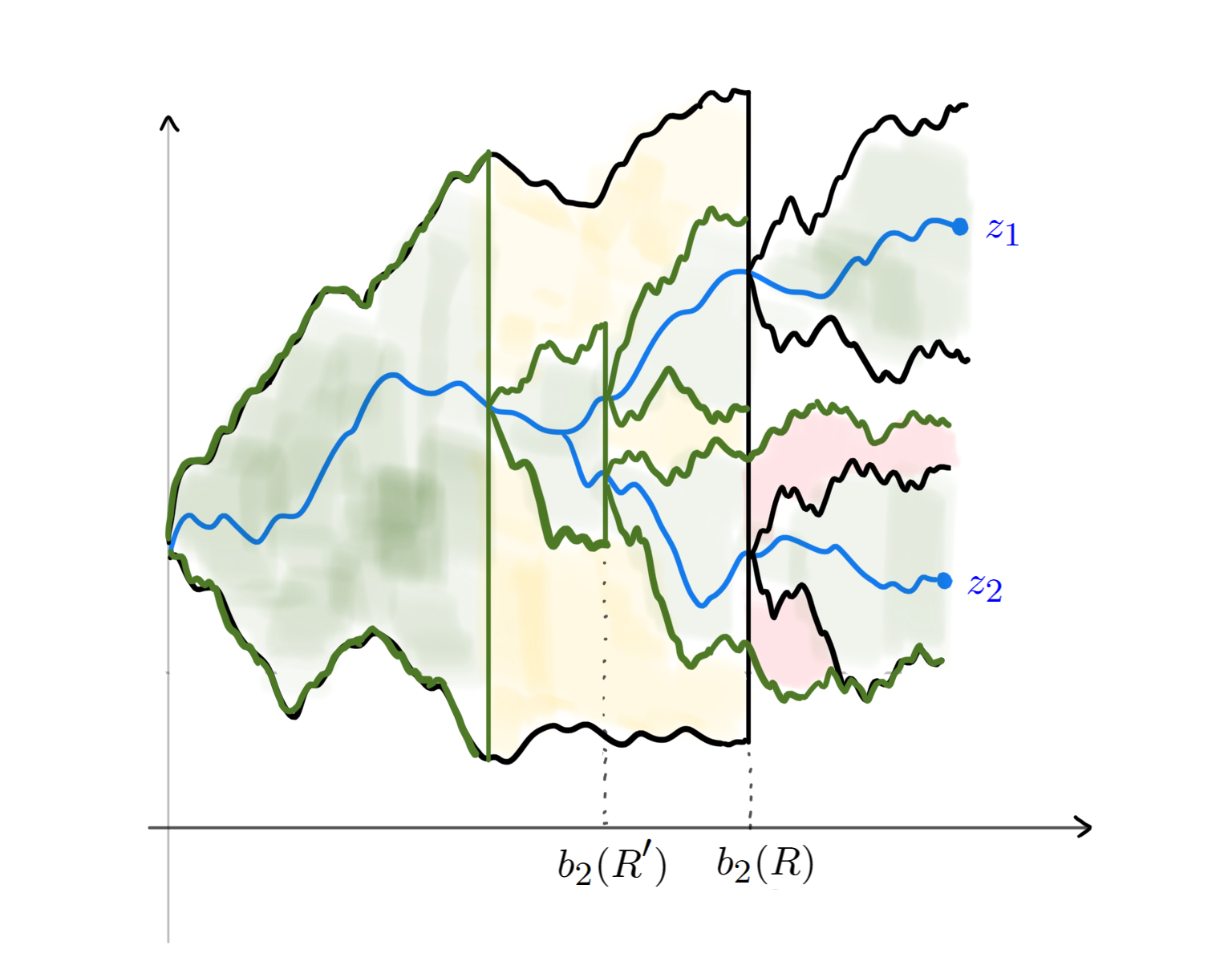}
\caption{In this figure, we have traced out veins to two distinct elements $v_1$ and $v_2$ of spacetime, for rate $R$ in black and a larger rate $R'$ in green. In the diagram, a green area denotes $(t,x)$ such that $x \in [L^j_t(R'),U^j_t(R')] \subseteq [L^j_t(R),U^j_t(R)]$ for some $j$. The yellow areas denote $(t,x)$ such that $x \in [L^j_t(R),U^j_t(R)] - [L^j_t(R'),U^j_t(R')]$, and the red areas denote the areas that breach the nestedness, i.e., $(t,x)$ such that $x \in  [L^2_t(R'),U^2_t(R')] - [L^2_t(R),U^2_t(R)]$. We have smoothed the Brownian paths for clarity.}\label{fig:nonnest}
\end{center}
\end{figure}

To see what can go wrong, the reader may inspect Figure \ref{fig:nonnest}, where we illustrate this phenomenon for $j =2$. By Lemma \ref{lem:nested}, it is true that the vein $V^1(R')$ is nested inside that of $V^1(R)$. However, as is depicted in Figure \ref{fig:nonnest}, it may well happen that the first time $b_2(R')$ that the dual web path to $z_2$ exits the $R'$-bubble $[L^1_t(R'),U^1_t(R')]$ occurs strictly before the first time $b_2(R)$ that this path exits the $R$-bubble $[L^1_t(R),U^1_t(R)]$ associated with the first vein. After this moment $b_2(R')$, the fragmentation event that occurs at $b_2(R)$ is not experienced by the vein $V^2(R')$, and as such, initially for $t \geq b_2(R)$, the vein $V^2(R')$ will be wider than $V^2(R)$. 

Nonetheless, in the course of our proof of Proposition \ref{prop:local}, we will overcome this possibility asymptotically:

\begin{proof}[Proof of Proposition \ref{prop:local}]
\textbf{Proof of base case $k=1$ of Proposition \ref{prop:local}}.\\
First, we prove (1). We note that when $k=1$, the ultimate nestedness stated in (1) follows immediately from Lemma \ref{lem:nested}. Indeed, Lemma \ref{lem:nested} states that for every $n' \geq n$ we have $R_{n'} \geq R_n$ so that we have the inequality
\begin{align} \label{eq:inclus}
L^1_t(R_n) \leq L^1_t(R_{n'})  \leq C^1_t \leq  U^1_t(R_{n'})   \leq U^1_t(R_{n}) \quad t \in [0,t_j].
\end{align}
Thus in the case $k=1$, (1) follows with $n_0 = 1$. 

Turning to the proof of (2), let us introduce the shorthand
$\ell_t := L_t(R) := \lim_{n \to \infty} L_t^j(R_n)$ and $u_t := U_t(R) := \lim_{n \to \infty} U_t^j(R_n)$.
Consider the limiting height process $(u_t-\ell_t)_{t \in [0,t_1]}$, which is a nonnegative stochastic process. By Lemma \ref{lem:law} we have
\begin{align*}
X^1 := (X_t^1)_{t \in [0,t_1]} := \left( \frac{u_t - \ell_t}{\sqrt{2}} \right)_{t \in [0,t_1]} \sim \mathbf{Q}_0^R.
\end{align*}

In the case where $R$ is lower quadratic-regular, $\mathbf{Q}_0^R$ governs a nontrivial stochastic process with a finite number of excursions of length $> \varepsilon$ on the time window $[0,t_1]$.
Note that each $L_t(R_n)$ as well as $\ell_t$ itself follows a segment of a path in the Brownian web. Since each $L_t(R_n)$ converges pointwise to $\ell_t$ for each $t$, and likewise $U_t(R_n)$ converges pointwise to $u_t$, it follows that there exists $n_0 = n_0(\varepsilon)$ such that for all $n \geq n_0$ we have
\begin{align*}
L_{t}^1(R_{n}) = \ell_{t} \qquad \text{and} \qquad U_{t}^1(R_{n}) = u_{t}
\end{align*} 
for all $t$ not in the initial time period $\varepsilon$ of any excursion. We note in particular that if $\varepsilon$ is chosen sufficiently small so that the final excursion of $(X_t)_{0 \leq t \leq t_1}$ (the one straddling $t_1$) has length $> \varepsilon$, then for all $n \geq n_0(\varepsilon)$ we will also have $L_t^1(R_n) = \ell_t$ and $U_t^1(R_n) = u_t$, so that we will also have the agreement of continued veins $\tilde{V}^1(R_n) = \tilde{V}^1(R)$ for all such $n$. That establishes (2) in the case $k=1$. 

As for (3), note again that by the nestedness that the final bubble associated with $R_n$ contains the final bubble containing $R$. Moreover, using the pointwise convergence, the upper and lower paths of the bubble for $R_n$ converge pointwise at each point to the upper and lower paths for $R$. Since all of these paths are continuous, by closure it again follows that
\begin{align} \label{eq:pathconv}
\mathrm{d}_{\mathrm{path}}(\alpha^1(R_n),\alpha^1(R)) \to 0 \quad \text{and} \quad \mathrm{d}_{\mathrm{path}}(\beta^1(R_n),\beta^1(R)) \to 0,
\end{align}
which implies that $\mathcal{M}_1(R_n) \to \mathcal{M}_1(R)$ as $n \to \infty$.

\vspace{2mm}
\textbf{Proof of inductive step}.\\
With the base case $k=1$ now proved, we prove the inductive step. Suppose now the statement of the result has been proven up to $k-1$. To lighten notation we write
\begin{align*}
    \ell_t^j = L_t^j(R) \quad \text{and} \quad u_t^j = U_t^j(R)
\end{align*}
for the limit processes (which are guaranteed to exist, by the inductive hypothesis, for $j=1,\ldots,k-1$). 

 Consider the dual web path backwards in time from $z_k$ and its interaction with the limiting veins $V^1(R),\ldots,V^{k-1}(R)$ and their continuations $\tilde{V}^1(R),\ldots,\tilde{V}^{k-1}(R)$. Write $b_k(R_n)$ and $b_k(R)$ for the respective separation times of the $k^{\text{th}}$ vein for rates $R_n$ and $R$.

 Suppose that $b_k(R) < +\infty$ and the path of $C^k$ lies in the $j^{\text{th}}$ vein for $R$ just before time $t$, i.e., there exists $\varepsilon > 0$ such that for all $b_k(R) - \varepsilon < t < b_k(R)$ we have $C_t^k \in [\ell_t^j,u_t^j]$. Then by ultimate equality away from zero, there exists some $n$ sufficiently large so that in that $L_t^j(R_n) = \ell_t$ and $U_t^j(R_n)$ for all $t \in [b_k(R)-\varepsilon,b_k(R))$. In particular, by construction for all such $n$ we will have $V_t^k(R_n) = V_t^j(R_n)$, and parts (1) and (2) on nestedness and equality follow for all $t \in [0,b_k(R)]$. 

 As for the proofs of (1) and (2) for the $k^{\text{th}}$ vein on the time interval $t \geq b_k(R)$, the proof here is identical to the $k=1$ case since the vein is separated. The proof of (3) also follows from identical arguments to the $k=1$ case. 

\end{proof}

\begin{proof}[Proof of Theorem \ref{thm:main} in the case where $R$ is lower-quadratic-regular]
This essentially boils down to showing that various limits commute. Indeed, on the one hand, we see that as a consequence of Proposition \ref{prop:local} part (3) that for each $k \geq 1$ the limit \begin{align*}
\mathcal{M}_k(R) := \lim_{n \to \infty} \mathcal{M}_k(R_n)
\end{align*}
exists.
Plainly $\mathcal{M}_k(R) \subseteq \mathcal{M}_{k+1}(R)$ from the construction, and accordingly the limit 
\begin{align*}
\mathcal{M}(R) := \overline{ \lim_{k \to \infty} } \mathcal{M}_k(R) 
\end{align*}
also exists.

On the other hand, since each $R_n$ is bounded, by Proposition \ref{prop:reconstruct}, the $R_n$-marble is constructed via
\begin{align*}
\mathcal{M}(R_n) := \overline{\lim_{k \to \infty}} \mathcal{M}_k(R_n).
\end{align*}
We now claim that $\mathcal{M}(R_n)$ converges to $\mathcal{M}(R)$ as $n \to \infty$. To establish this claim, we need to verify that $\mathrm{d}_{\mathcal{H}} \left( \mathcal{M}(R_n) , \mathcal{M}(R)  \right)  \to 0$ as $n \to \infty$, which amounts to the verification that we have both
\begin{align}  \label{eq:haus1}
\lim_{n \to \infty} \sup_{\pi \in \mathcal{M}(R)} \inf_{\pi' \in \mathcal{M}(R_n)} \mathrm{d}_{\mathrm{path}}( \pi , \pi') = 0
\end{align}
and
\begin{align}  \label{eq:haus2}
\lim_{n \to \infty} \sup_{\pi \in \mathcal{M}(R_n)} \inf_{\pi' \in \mathcal{M}(R)} \mathrm{d}_{\mathrm{path}}( \pi , \pi') = 0
\end{align}

With a view to first proving \eqref{eq:haus1}, we first establish the weaker claim that
\begin{align} \label{eq:accumulate}
\text{For every $\pi$ in $\mathcal{M}(R)$, $\inf_{ \pi' \in \mathcal{M}(R_n)} \mathrm{d}_{\mathrm{path}}(\pi,\pi') \to 0$ as $n \to \infty$}. 
\end{align}

To see this, let $\pi \in \mathcal{M}(R)$. Since $\mathcal{M}(R):= \overline{\lim_{k \uparrow \infty}} \mathcal{M}_k(R)$, we may choose $k > 0, \pi'' \in \mathcal{M}_k(R)$ such that $\mathrm{d}_{\mathrm{path}}(\pi,\pi'') \leq \varepsilon/2$. Now since $\mathcal{M}_k(R_n) \to \mathcal{M}_k(R)$ as $n \to \infty$, we may choose $\pi' \in \mathcal{M}_k(R_n) \subseteq \mathcal{M}(R_n)$ such that $\mathrm{d}_{\mathrm{path}}(\pi', \pi'') \leq \varepsilon/2$. It follows that there exists $\pi' \in \mathcal{M}(R_n)$ such that 
\begin{align*}
\mathrm{d}_{\mathrm{path}}(\pi', \pi) \leq \mathrm{d}_{\mathrm{path}}(\pi',\pi'') + \mathrm{d}_{\mathrm{path}}(\pi'',\pi) \leq \varepsilon,
\end{align*}
by the triangle inequality. That proves \eqref{eq:accumulate}. 

We now claim that \eqref{eq:accumulate} can be upgraded to the stronger statement \eqref{eq:haus1} using the compactness of the underlying path space. Indeed, let $\varepsilon > 0$ be arbitrary. By compactness, we may choose $\pi_1,\ldots,\pi_\ell$ in $\mathcal{M}(R)$ such that for all $\pi \in \mathcal{M}(R)$ there exists $1 \leq i \leq \ell$ such that $\mathrm{d}_{\mathrm{path}}(\pi,\pi_i) \leq \varepsilon/2$. Now using \eqref{eq:accumulate} we may choose $n_0$ sufficiently large such that for all $n \geq n_0$ and for each $1 \leq i \leq k$ there exists $\pi'_i$ in $\mathcal{M}(R_n)$ such that $\mathrm{d}_{\mathrm{path}}(\pi_i,\pi_i') \leq \varepsilon/2$. It then follows from the triangle inequality that for every $\varepsilon > 0$, there exists $n_0 >0$ such that for every $\pi \in \mathcal{M}(R)$, there exists $\pi' \in \mathcal{M}(R_n)$ such that $\mathrm{d}_{\mathrm{path}}(\pi,\pi') \leq \varepsilon$, thereby establishing \eqref{eq:haus1}. 

We turn to the proof of \eqref{eq:haus2}. Let $\mathcal{S}$ denote the set of accumulation points of the sequence of sets $\mathcal{M}(R_n)$. In other words, $\mathcal{S}$ is the set of paths $\pi$ for which there exists a subsequence of paths $(\pi_{n_j})_{j \geq 1}$, $\pi_{n_j} \in \mathcal{M}(R_{n_j})$ with $n_j \to \infty$ as $j \to \infty$, such that $\mathrm{d}_{\mathrm{path}}(\pi_{n_j},\pi) \to 0$.

We claim that to establish \eqref{eq:haus2}, it is sufficient to show that $\mathcal{S} \subseteq \mathcal{M}(R)$. Indeed, if $\mathcal{S} \subseteq \mathcal{M}(R)$, but \eqref{eq:haus2} does not hold, then we can find $\varepsilon > 0$ and a sequence $(\pi_j)_{j \geq 1}$ of paths, $\pi_j \in \mathcal{M}(R_{n_j})$, such that 
\begin{align} \label{eq:ineq11}
\inf_{ \pi ' \in \mathcal{S}} \mathrm{d}_{\mathrm{path}}(\pi_j, \pi') \geq \inf_{ \pi ' \in \mathcal{M}(R) } \mathrm{d}_{\mathrm{path}}(\pi_j, \pi') \geq \varepsilon.  
\end{align}
By the compactness of the space $\Pi$ of paths, the sequence $(\pi_j)_{j \geq 1} $ has a convergent subsequence. But this subsequence must converge to an element of $\mathcal{S}$, contradicting \eqref{eq:ineq11}. Thus $\mathcal{S} \subseteq \mathcal{M}(R)$ implies \eqref{eq:haus2}.

Thus it remains to prove $S \subseteq \mathcal{M}(R)$. Let $\pi$ in $\mathcal{S}$. 
Let $z_j = (t_j,x_j)$ be a point of the dense set $\mathcal{D}$ lying above the path $\pi$, in that $t_j \geq \sigma_\pi$, and $x_j > \pi_{t_j}$. There exists an integer $n_j$ such that for all $n \geq n_j$, the first $j$ veins are nested. In particular, for all $n \geq n_j$ we have
\begin{align*}
\sigma_j(R_n) \leq \sigma_j(R) \qquad \text{and} \qquad \alpha^j_s(R_n) \leq \alpha^j_s(R) \quad \text{for all $s \in [\sigma_j(R), \infty]$}.
\end{align*}
In particular, the path $\pi$, which is an accumulation point of paths in $\mathcal{M}(R_n)$, must satisfy 
\begin{align*}
\pi_s \leq \alpha^j_s(R) \qquad \text{for all $s \geq \sigma_j(R) \vee \sigma_\pi$}
\end{align*}
In particular, we have the inequality
\begin{align*}
x_j > \alpha^j_{t_j}(R) \geq \pi_{t_j}.
\end{align*}
A similar argument says that if $z_k = (t_k,x_j)$ is a point lying below the path of $\pi$ (in that $t_k \geq \sigma_\pi$ and $x_k < \pi_{t_k}$), then 
\begin{align*}
\pi_s \geq \beta^k_s(R) \qquad \text{for all $s \geq \sigma_k(R) \vee \sigma_\pi$}.
\end{align*}
    It follows, in particular, since $z_i$ are dense, for any point $(s,\pi_s)$ on the limit path, there are paths of $\mathcal{M}(R)$ travelling arbitrarily close to $(s,\pi_s)$ from both above and below. Since $\mathcal{M}(R)$ is closed, it follows that the path $\pi$, considered as a set of points $\{ (t, \pi_t) : t \geq \sigma_\pi \}$, is a subset of a path $\pi'$ of $\mathcal{M}(R)$. However, we note that $\pi'$ may not start before $\pi$, since for each arbitrary point $z_j$ above the path of $\pi'$, for all sufficiently large $n$, the start time $\sigma_j(R_n)$ of the path $\alpha^j(R_n)$ satisfies $\sigma_j(R_n) \leq \sigma_J(R)$. Thus we in fact have $\sigma_\pi = \sigma_{\pi'}$, and thus $\pi' = \pi$. That establishes that  $\mathcal{S} \subseteq \mathcal{M}(R)$, and accordingly, proves \eqref{eq:haus2}.

Finally, to see that each point $(t,x)$ of $(0,\infty) \times \mathbb{R}$ is almost surely contained in a bubble of $\mathcal{M}(R)$, note that the height $U^j_{t_j}-L^j_{t_j}$ of the $j^{\text{th}}$ vein leading to $z_j$ is an almost surely positive random variable for each $j > 0$. In particular, the bubble containing each $z_j$ has an almost surely positive Lebesgue measure. In particular, as $\varepsilon \to 0$, the probability that the bubble containing $z_j$ contains a Euclidean ball around $z_j$ of radius $\varepsilon$ converges to $1$. Since the $\mathcal{D} = \{ z_1,z_2,\ldots\}$ is a dense subset of $(0,\infty) \times \mathbb{R}$, it follows that each point of $(0,\infty) \times \mathbb{R}$ is almost surely contained in a non-empty bubble.
\end{proof}

\subsection{Distributional formulas for the Brownian marble}
We close this section by deriving the results stated in Section \ref{sec:distributional}.

Let $z = (t,x) \in (0,\infty) \times \mathbb{R}$. To describe the probabilistic properties of the bubble containing $z$ in the Brownian marble $\mathcal{M}(R_\lambda)$, consider the vein and its continuation to $z$. These are given by 
\begin{align*}
V = (L_s,C_s,U_s)_{s \in [0,t]} \quad \text{and} \quad \tilde{V} = (L_s,U_s)_{s \in [t,\tau]},
\end{align*}
where $\tau$ is the death time of the bubble containing $z$. Let $\sigma$ be the birth time of the bubble containing $z$. Then the bubble containing $z$ takes the form
\begin{align*}
\mathcal{B}^z = \{ (s,y) : \sigma < s < \tau , L_s < y < U_s \}.
\end{align*}
The stochastic process $X := (X_s)_{s \in [0,t]} :=  ((U_s-L_s)/\sqrt{2})_{s \in [0,t]}$ is distributed according to $\mathbf{Q}_0^{R_\lambda}$. Accordingly, by part (1) of Proposition \ref{prop:sslaw}, the birth time $\sigma$ of the bubble containing $(t,x)$, which is identical to the last visit time to $0$ of $X$ before time $t$, satisfies \eqref{eq:bubblestart}. Likewise, \eqref{eq:bubbleheight} follows from Proposition \ref{prop:sslaw} part (3). 

Finally, Theorem \ref{thm:dbessel} follows from Lemma \ref{lem:veiny} and \eqref{eq:Funct400}.

\section{The $R$-Bessel process seen as a spine}
\label{sect:spine}

In this brief final section, we expound further on how the growth-fragmentation process defined in Section \ref{sect:branching} can be related with the $R$-Bessel process and the vein construction of the $R$-marble.


Let $p^N(t;y,x)$ be the density of mass at time $t$ starting with a single cluster of mass $y>0$ i.e., for every continuous bounded test function
$$
\left<\mu^N_t,f\right> = \mathbb{E}_{\delta_{y}}\left(
\sum_{u\in{\mathcal N}_t} f(X_u)\right) \ = \ \int_0^\infty f(x) p(t;y,x)dx
$$
where the sum is taken over the clusters alive at time $t$
and $X_u$ is the mass of the cluster labelled $u$.
Then $(t,x)\to p(t; y,x)$ is the fundamental solution of the non-local equation
$$
\partial_t u \ \ = \ \frac{1}{2} \partial_{xx} u \ + \  R(x)\left(N  u(t,\frac{x}{N}) - u(t,x)\right).
$$
Define the operator 
on twice differentiable functions 
on $\mathbb{R}_+$
$$
{\mathcal B}^N f(x) \ =  \  \frac{1}{2} \partial_{xx} f \ + \  R(x)\left( N f(\frac{x}{N}) - f(x)\right), \ \ f(0) = 0
$$
 The total mass is preserved in expectation which translates into the property that the identity function $h(x)=x$ is harmonic for ${\mathcal B}^N$ in the sense that ${\mathcal B}^N h(x) = 0$. Consider the Doob's $h$ transform associated to $h$
\begin{eqnarray}
\label{eq:spine}
\frac{{\mathcal B}^N(hf)}{h}(x) &  = & \frac{1}{2} f''(x) + \frac{1}{x} f'(x) + R(x)(f(\frac{x}{N}) - f(x)) 
\end{eqnarray}
and let $(\bar X_t^N)_{t\geq0}$ be the Markov process
whose generator is given by \eqref{eq:spine}. This is a $3$-Bessel process 
jumping at rate $R(x)$ from $x$ to $x/N$.
We call this process the spine associated to the growth-fragmentation process. By the many-to-one formula (Lemma 2 in \cite{bertoin2020strong}), we can relate the spine process to the additive functionals of the growth-fragmentation process: 
for every continuous, bounded and compactly supported function on $(0,\infty)$, 
$$
\mathbb{E}_{\delta_{y}}\left(
\sum_{u\in{\mathcal N}_t} f(X_u)\right) \ = \ h(x) \mathbb{E}_{y}\left(\frac{f(\bar X^N_t)}{h(\bar X^N_t)}\right)
$$
By the same argument as in Theorem \ref{thm:newtransition}, one can argue that as $N\to\infty$, the process $\bar X^N$ converges to a limiting process $\bar X$, and further that this limit is identical to the $R$-Bessel process $X$ as defined in Theorem \ref{thm:newtransition}. Note that in combination with the many-to-one lemma, this shows one part of Conjecture \ref{conj:branching}. For every $t>0$, the sequence of measures 
$\mu_t^N$ converges in the vague topology to the trivial measure for $\lambda\geq 6$.


\begin{thebibliography}{35}
\bibitem{ANM}
\textsc{Adler, M., Nordenstam, E. and Van Moerbeke, P.} (2014). The Dyson Brownian minor process. \emph{Ann. Inst. Fourier} \textbf{64}(3): 971--1009.

\bibitem{arratia}
\textsc{Arratia, R. A.} (1979). Coalescing Brownian Motions on the Line. PhD Thesis, University of Wisconsin--Madison.

\bibitem{assiotis}
\textsc{Assiotis, T.} (2020). Determinantal structures in space-inhomogeneous dynamics on interlacing arrays. \emph{Ann. Henri Poincaré} \textbf{21}: 909--940.

\bibitem{barbato}
\textsc{Barbato, D.} (2005). FKG inequality for Brownian motion and stochastic differential equations. \emph{Electron. Commun. Probab.} \textbf{10}: 7--16.

\bibitem{BMS}
\textsc{Barnes, C., Mytnik, L. and Sun, Z.} (2024). On the coming down from infinity of coalescing Brownian motions. \emph{Ann. Probab.} \textbf{52}(1): 67--92.

\bibitem{BGS}
\textsc{Berestycki, N., Garban, C. and Sen, A.} (2015). Coalescing Brownian flows: a new approach. \emph{Ann. Probab.} \textbf{43}(6): 3177--3215.

\bibitem{bertoinbook}
\textsc{Bertoin, J.} (2006). \emph{Random Fragmentation and Coagulation Processes}. Cambridge University Press.

\bibitem{berestycki}
\textsc{Berestycki, J.} (2004). Exchangeable fragmentation-coalescence processes and their equilibrium measures. \emph{Electron. J. Probab.} \textbf{9}: 770--824.

\bibitem{bertoin2020strong}
\textsc{Bertoin, J. and Watson, A.} (2020). The strong Malthusian behavior of growth-fragmentation processes. \emph{Ann. Henri Lebesgue} \textbf{3}: 795--823.

\bibitem{BY}
\textsc{Bertoin, J. and Yor, M.} (2001). On subordinators, self-similar Markov processes and some factorizations of the exponential variable. \emph{Electron. Commun. Probab.} \textbf{6}: 95--106.

\bibitem{BN}
\textsc{Burdzy, K. and Nualart, D.} (2002). Brownian motion reflected on Brownian motion. \emph{Probab. Theory Relat. Fields} \textbf{122}(4): 471--493.

\bibitem{CC}
\textsc{Caballero, M. E. and Chaumont, L.} (2006). Weak convergence of positive self-similar Markov processes and overshoots of Lévy processes. \emph{Ann. Probab.} \textbf{34}(3): 1012--1034.

\bibitem{CH2}
\textsc{Cannizzaro, G. and Hairer, M.} (2023). The Brownian Castle. \emph{Commun. Pure Appl. Math.} \textbf{76}(10): 2693--2764.

\bibitem{CH}
\textsc{Cannizzaro, G. and Hairer, M.} (2023). The Brownian Web as a random $\mathbb R$-tree. \emph{Electron. J. Probab.} \textbf{28}: 1--47.

\bibitem{CKPR}
\textsc{Chaumont, L., Kyprianou, A., Pardo, J. C. and Rivero, V.} (2012). Fluctuation theory and exit systems for positive self-similar Markov processes. \emph{Ann. Probab.} \textbf{40}(1): 245--279.

\bibitem{CPR}
\textsc{Chaumont, L., Pantí, H. and Rivero, V.} (2013). The Lamperti representation of real-valued self-similar Markov processes. \emph{Bernoulli} \textbf{19}(5B): 2494--2523.

\bibitem{CSST}
\textsc{Coupier, D., Saha, K., Sarkar, A. and Tran, V. C.} (2021). The 2D-directed spanning forest converges to the Brownian web. \emph{Ann. Probab.} \textbf{49}(1): 435--484.

\bibitem{dubedat}
\textsc{Dubédat, J.} (2004). Reflected planar Brownian motions, intertwining relations and crossing probabilities. \emph{Ann. Inst. Henri Poincaré Probab. Stat.} \textbf{40}(5): 539--552.

\bibitem{EF}
\textsc{Ellis, T. and Feldheim, O. N.} (2016). The Brownian web is a two-dimensional black noise. \emph{Ann. Inst. Henri Poincaré Probab. Stat.} \textbf{52}(1): 162--172.

\bibitem{etheridge}
\textsc{Etheridge, A.} (2000). \emph{An Introduction to Superprocesses}. American Mathematical Society.

\bibitem{FINR}
\textsc{Fontes, L. R. G., Isopi, M., Newman, C. M. and Ravishankar, K.} (2004). The Brownian web: Characterization and convergence. \emph{Ann. Probab.} \textbf{32}(4): 2857--2883.

\bibitem{FINR2}
\textsc{Fontes, L. R. G., Isopi, M., Newman, C. M. and Ravishankar, K.} (2006). Coarsening, nucleation, and the marked Brownian web. \emph{Ann. Inst. Henri Poincaré Probab. Stat.} \textbf{42}(1): 37--60.

\bibitem{FKZ}
\textsc{Foss, S., Konstantopoulos, T. and Zachary, S.} (2007). Discrete and continuous time modulated random walks with heavy-tailed increments. \emph{J. Theor. Probab.} \textbf{20}: 581--612.

\bibitem{GS}
\textsc{Gorin, V. and Shkolnikov, M.} (2015). Multilevel Dyson Brownian motions via Jack polynomials. \emph{Probab. Theory Relat. Fields} \textbf{163}(3): 413--463.

\bibitem{HW}
\textsc{Howitt, C. and Warren, J.} (2009). Dynamics for the Brownian web and the erosion flow. \emph{Stochastic Process. Appl.} \textbf{119}(6): 2028--2051.

\bibitem{JY}
\textsc{Jacobsen, M. and Yor, M.} (2003). Multi-self-similar Markov processes on $\mathbb{R}_+$ and their Lamperti representations. \emph{Probab. Theory Relat. Fields} \textbf{126}(1): 1--28.

\bibitem{kingman}
\textsc{Kingman, J. F. C.} (1982). The coalescent. \emph{Stochastic Process. Appl.} \textbf{13}(3): 235--248.

\bibitem{kyprianou}
\textsc{Kyprianou, A. E.} (2006). \emph{Introductory Lectures on Fluctuations of Lévy Processes with Applications}. Springer.

\bibitem{KPRS}
\textsc{Kyprianou, A. E., Pagett, S. W., Rogers, T. and Schweinsberg, J.} (2017). A phase transition in excursions from infinity of the “fast” fragmentation-coalescence process. \emph{Ann. Probab.} \textbf{45}(6A): 3829--3849.

\bibitem{KPR}
\textsc{Kyprianou, A. E., Pagett, S. W. and Rogers, T.} (2017). Universality in a class of fragmentation-coalescence processes. \emph{Ann. Inst. Henri Poincaré Probab. Stat.} \textbf{54}(2): 755--784.

\bibitem{lawler}
\textsc{Lawler, G.} Notes on the Bessel process. Available at: \url{https://www.math.uchicago.edu/~lawler/bessel18new.pdf}.

\bibitem{legrand}
\textsc{Legrand, A.} (2024). Some FKG inequalities for stochastic processes. \emph{Preprint}, arXiv:2407.13871.

\bibitem{li}
\textsc{Li, Z.} (2011). \emph{Measure-Valued Branching Processes}. Springer.

\bibitem{MRTZ}
\textsc{Munasinghe, R., Rajesh, R., Tribe, R. and Zaboronski, O.} (2005). Multi-scaling of the $n$-point density function for coalescing Brownian motions. \emph{Commun. Math. Phys.} \textbf{268}: 717--725.

\bibitem{NRS}
\textsc{Newman, C. M., Ravishankar, K. and Schertzer, E.} (2010). Marking (1,2) points of the Brownian web and applications. \emph{Ann. Inst. Henri Poincaré Probab. Stat.} \textbf{46}(2): 537--574.

\bibitem{oconnell}
\textsc{O’Connell, N.} (2012). Directed polymers and the quantum Toda lattice. \emph{Ann. Probab.} \textbf{40}(2): 437--458.

\bibitem{oksendal}
\textsc{Øksendal, B.} (2003). \emph{Stochastic Differential Equations: An Introduction with Applications}. Springer.

\bibitem{patie}
\textsc{Patie, P.} (2012). Law of the absorption time of some positive self-similar Markov processes. \emph{Ann. Probab.} \textbf{40}(2): 765--787.

\bibitem{PPR}
\textsc{Pantí, H., Pardo, J. C. and Rivero, V. M.} (2020). Recurrent extensions of real-valued self-similar Markov processes. \emph{Potential Anal.} \textbf{53}(3): 899--920.

\bibitem{RY}
\textsc{Revuz, D. and Yor, M.} (1999). \emph{Continuous Martingales and Brownian Motion}, 3rd ed. Springer.

\bibitem{RSS}
\textsc{Roy, R., Saha, K. and Sarkar, A.} (2016). Random directed forest and the Brownian web. \emph{Ann. Inst. Henri Poincaré Probab. Stat.} \textbf{52}(3): 1106--1143.

\bibitem{rivero1}
\textsc{Rivero, V.} (2005). Recurrent extensions of self-similar Markov processes and Cramér's condition. \emph{Bernoulli} \textbf{11}(3): 471--509.

\bibitem{rivero2}
\textsc{Rivero, V.} (2007). Recurrent extensions of self-similar Markov processes and Cramér’s condition II. \emph{Bernoulli} \textbf{13}(4): 1053--1070.

\bibitem{SSS}
\textsc{Schertzer, E., Sun, R. and Swart, J. M.} (2017). The Brownian web, the Brownian net, and their universality. In \emph{Advances in Disordered Systems, Random Processes and Some Applications}, 270--368.

\bibitem{SS2}
\textsc{Sun, R. and Swart, J. M.} (2008). The Brownian net. \emph{Ann. Probab.} \textbf{36}(3): 1153--1208.

\bibitem{SS}
\textsc{Schramm, O. and Smirnov, S.} (2011). On the scaling limits of planar percolation. \emph{Ann. Probab.} \textbf{39}: 1768--1814.

\bibitem{SW}
\textsc{Soucaliuc, F. and Werner, W.} (2002). A note on reflecting Brownian motions. \emph{Electron. Commun. Probab.} \textbf{7}: 117--122.

\bibitem{STW00}
\textsc{Soucaliuc, F., Tóth, B. and Werner, W.} (2000). Reflection and coalescence between independent one-dimensional Brownian paths. \emph{Ann. Inst. Henri Poincaré Probab. Stat.} \textbf{36}(4): 509--545.

\bibitem{TZ}
\textsc{Tribe, R. and Zaboronski, O.} (2011). Pfaffian formulae for one-dimensional coalescing and annihilating systems. \emph{Electron. J. Probab.} \textbf{16}: 2080--2103.

\bibitem{toth}
\textsc{Tóth, B. and Werner, W.} (1998). The true self-repelling motion. \emph{Probab. Theory Relat. Fields} \textbf{111}(3): 375--452.

\bibitem{VV}
\textsc{Vető, B. and Virág, B.} (2023). The geometry of coalescing random walks, the Brownian web distance and KPZ universality. \emph{Preprint}, arXiv:2305.15246.

\bibitem{warren}
\textsc{Warren, J.} (2007). Dyson's Brownian motions, intertwining and interlacing. \emph{Electron. J. Probab.} \textbf{12}: 573--590.


\end{thebibliography}
\end{document}